\numberwithin{equation}{section}
\newtheorem{theorem}{Theorem}[section]
\newtheorem{lemma}{Lemma}[section]
\newtheorem{proposition}{Proposition}[section]
\newtheorem{corollary}{Corollary}[section]
\newtheorem{definition}[theorem]{Definition}
\newtheorem{remark}{Remark}
\def\ds{\displaystyle}
\begin{document}

\title[Analyticity of free boundaries]{On higher order boundary Harnack
and analyticity of free boundaries}
\author{Chilin Zhang}
\address{Department of Mathematics, Columbia University, New York, NY 10027}\email{cz2584@columbia.edu}
\begin{abstract}
We establish a $C^{1,\alpha}$ Schauder estimate of a non-standard degenerate elliptic equation and use it to give another proof of the higher order boundary Harnack inequality. As an application, we obtain the analyticity of the free boundary in the classical obstacle problem based on iterating the boundary Harnack principle.
\end{abstract}
\maketitle

\section{Introduction}
\subsection{Classical boundary Harnack} Let
\begin{equation*}
    \Omega_{1}=\{\Gamma(x')<x_{n}<\Gamma(x')+1\}\cap\{|x'|<1\},\quad\Gamma(0)=0
\end{equation*}
be a $C^{0,1}$ domain, and $u_{1},u_{2}>0$ two continuous functions in $\Omega_{1}$ so that
\begin{equation}
    Lu_{1}=Lu_{2}=0,\quad u_{1}\big|_{\Gamma}=u_{2}\big|_{\Gamma}=0,
\end{equation}
with
\begin{equation*}
    L=tr(A\cdot\nabla^{2})\quad\mbox{or}\quad L=div(A\cdot\nabla)
\end{equation*}
a uniformly elliptic operator with measurable coefficients. The boundary Harnack inequality studies the regularity properties of the ratio $\ds w=\frac{u_{1}}{u_{2}}$ near the boundary $\Gamma$.

The simplest version of the boundary Harnack is due to Kemper\cite{Kemper72}, where he proved that if $L=\Delta$, then
\begin{equation*}
    \frac{u_{1}}{u_{2}}(x)\leq C\cdot\frac{u_{1}}{u_{2}}(\frac{e_{n}}{2})\quad\mbox{in}\quad\Omega_{1/2}=\{\Gamma(x')<x_{n}<\Gamma(x')+1/2\}\cap\{|x'|<1/2\},
\end{equation*}
which implies that the ratio $w$ is $C^{\alpha}$ at $\Gamma$.

In \cite{CFMS} and \cite{FGMS}, the boundary Harnack inequality was extended to divergence and non-divergence equations, with $A$ just measurable. Other versions for equations with right-hand side like $Lu_{i}=f_{i}$ were studied more recently, see Allen and Shangholian \cite{otherrhs} and Ros-Oton and Torres-Latorre \cite{RHS}.

The Lipschitz condition of the boundary $\Gamma$ can also be weakened. For example, Jerison and Kenig in \cite{JK82} extended the result to NTA domains, and Bass and Burdzy in \cite{BB1}, \cite{BB2} to twisted H\"older domains by probabilistic methods. More precisely, $\Gamma$ is assumed to be $C^{\alpha}$ with $\alpha>1/2$ if $L$ is in non-divergence form, or with $\alpha>0$ if $L$ is a divergence operator. A unified analytic approach was given in \cite{DS19} by De Silva and Savin.

\subsection{Higher order boundary Harnack}
Next we assume $\Gamma\in C^{k,\alpha}$, $A\in C^{k-1,\alpha}$ for $k\geq1$. In \cite{DS14} De Silva and Savin showed the $C^{k,\alpha}$ regularity of the ratio $w$ by using higher order polynomials to approximate the $u_{i}$'s. They also gave the higher order boundary Harnack for a slit domain in \cite{DS14thin} using a similar method.

In this paper, we will prove the higher order boundary Harnack inequality by a different method. 
\begin{theorem}\label{HOBH}
    Assume that $u_{1},u_{2}>0$ defined on $\Omega_{1}=\{x_{n}>\Gamma(x')\}\cap B_{1}$ satisfy
    \begin{equation}
        div(A\cdot\nabla u_{i})=0,\quad u_{i}\Big|_{x_{n}=\Gamma(x')}=0
    \end{equation}
    with $A$ symmetric and uniformly elliptic. If $\Gamma\in C^{k,\alpha}$, $A\in C^{k-1,\alpha}$ for $k\geq1$, then the ratio $\ds w=\frac{u_{1}}{u_{2}}$ is $C^{k,\alpha}$ in $\Omega_{1/2}$.
\end{theorem}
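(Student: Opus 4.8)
The plan is to realize the ratio $w=u_1/u_2$ as a solution of a degenerate elliptic equation carrying the weight $x_n^2$, and to prove a Schauder estimate for that equation. After flattening the boundary by the $C^{k,\alpha}$ diffeomorphism $\Phi(x',x_n)=(x',x_n+\Gamma(x'))$ --- which preserves the divergence form, sends $A$ to a symmetric uniformly elliptic $\tilde A\in C^{k-1,\alpha}$ and $w$ to $w\circ\Phi$ --- I may assume $\Gamma\equiv0$. Writing $u_1=wu_2$ and using the symmetry of $A$ together with $\operatorname{div}(A\nabla u_i)=0$ yields the identity
\[
\operatorname{div}\!\big(u_2^2\,A\nabla w\big)=0,\qquad u_2^2\,A\nabla w=u_2\,A\nabla u_1-u_1\,A\nabla u_2=:F,
\]
where $F$ is divergence-free and extends continuously to $\{x_n=0\}$, where it vanishes. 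Standard Schauder theory (divergence form, $C^{k-1,\alpha}$ coefficients, flat boundary, zero data) gives $u_i\in C^{k,\alpha}$ up to $\{x_n=0\}$, so $v_2:=u_2/x_n\in C^{k-1,\alpha}$ and $v_2\ge c_0>0$ by the Hopf lemma; hence $\bar A:=v_2^2A\in C^{k-1,\alpha}$ is symmetric uniformly elliptic and $w$ solves
\[
\operatorname{div}\!\big(x_n^2\,\bar A\,\nabla w\big)=0 .
\]
Dividing $u_i$ by $x_n$ already shows $w\in C^{k-1,\alpha}$, so the content of the theorem is the gain of one derivative; moreover the $x_n^2$-weight implicitly enforces a conormal (Neumann) condition $\bar A\nabla w\cdot e_n=0$ on $\{x_n=0\}$, which is what makes the degenerate problem well posed and is consistent with the model solutions being even in the normal direction.

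The heart of the matter is a Schauder estimate for this non-standard degenerate operator: \emph{if $\operatorname{div}(x_n^2B\nabla w)=\operatorname{div}(x_n^2g)$ in $B_1^+$ with $B\in C^{m,\beta}$ symmetric uniformly elliptic and $g\in C^{m,\beta}$, then $w\in C^{m+1,\beta}(B_{1/2}^+)$ with the natural estimate.} I would prove it by the perturbation/compactness (improvement-of-flatness) scheme. Freezing the coefficients at a boundary point and normalizing by a linear change of variables fixing $\{x_n=0\}$ so that $B_0e_n\parallel e_n$, the model operator is $\Delta_{x'}+\partial_{nn}+\tfrac2{x_n}\partial_n=x_n^{-2}\operatorname{div}(x_n^2\nabla\cdot)$; under the substitution $W(x',y):=w(x',|y|)$ with $y\in\mathbb R^3$, the model equation becomes the Laplace equation in $\mathbb R^{n+2}$ for functions radial in $y$, so its solutions are analytic and, at every scale, are approximated by the radial-in-$y$ harmonic polynomials up to any degree at the optimal rate. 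A compactness argument then shows that a solution of the variable-coefficient equation with small $\operatorname{osc}B$ and small $\|g\|_{C^{m,\beta}}$ is, at a fixed smaller scale, close to a model solution; iterating over dyadic scales produces the degree-$(m+1)$ Taylor expansion of $w$ (relative to this admissible polynomial class) with $O(r^{m+1+\beta})$ remainder, i.e.\ $w\in C^{m+1,\beta}$.

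Applying this estimate with $m=k-1$ and $g=0$ to $\operatorname{div}(x_n^2\bar A\nabla w)=0$ gives directly $w\in C^{k,\alpha}(\Omega_{1/2})$, which is the theorem. (Alternatively one can prove only the case $m=0$ and bootstrap: for a tangential direction $e_j$, $j<n$, differentiating gives $\operatorname{div}(x_n^2\bar A\nabla\partial_jw)=\operatorname{div}(-x_n^2(\partial_j\bar A)\nabla w)$, a degenerate equation of the same type with right-hand side one order less regular, so tangential regularity is gained one order at a time, and the purely normal derivatives are recovered from the equation --- equivalently, by ordinary elliptic regularity in the $\mathbb R^{n+2}$ picture.)

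The main obstacle is the degenerate Schauder estimate itself. The weight $x_n^2$ is not a Muckenhoupt $A_2$ weight, so neither the Fabes--Kenig--Serapioni theory for degenerate equations nor classical elliptic Schauder applies off the shelf; one must exploit the special cylindrical structure --- the lift to $\mathbb R^{n+2}$, or an equivalent direct computation --- to obtain even the basic Caccioppoli, oscillation and compactness estimates, and to pin down the correct class of model polynomials, the correct approximation rate, and the right weak formulation near $\{x_n=0\}$ where the implicit conormal condition lives.
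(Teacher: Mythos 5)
Your proposal follows the same overall architecture as the paper: flatten the boundary, observe $\operatorname{div}(u_2^2A\nabla w)=0$, reduce via Hopf to the model degenerate equation $\operatorname{div}(x_n^2\bar A\nabla w)=0$, prove a Schauder estimate for that operator, then bootstrap tangentially for $k\ge 2$. Two technical choices differ, and both are worth noting. First, for the constant-coefficient model $\operatorname{div}(x_n^2\nabla h)=0$ you lift $h$ to a radial-in-$y$ harmonic function on $\mathbb R^{n-1}\times\mathbb R^3$ via the identity $\Delta_y\big(w(x',|y|)\big)=\partial_{rr}w+\tfrac{2}{r}\partial_r w$; the paper instead notes the more elementary fact that $H=x_nh$ is harmonic in $B_1^+$ with $H|_{x_n=0}=0$, and reads off the regularity of $h$ from the boundary estimate for $H$ (their Lemma \ref{constant coefficient}). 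Both deliver the same conclusion --- approximation of model solutions by polynomials at optimal rate --- but the multiplication-by-$x_n$ trick is specific to $s=2$, whereas your cylindrical lift generalizes to $\operatorname{div}(x_n^s\nabla\cdot)$ for any positive integer $s$. Second, for the iteration you propose a compactness/improvement-of-flatness scheme, while the paper runs an explicit Campanato iteration built on a harmonic replacement lemma together with a weighted Poincar\'e and a weighted Caccioppoli inequality tailored to $x_n^2\,dx$ (Propositions \ref{regular poincare}, \ref{caccioppoli}, \ref{dirichlet}, Lemma \ref{harmonic replacement}); this explicit route has the merit of producing the quantitative constants later needed for the analyticity argument of Theorem \ref{analytic}. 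You correctly flag the missing ingredients (weak formulation and energy estimates adapted to the non-$A_2$ weight, existence via Lax--Milgram, the exact polynomial approximation class), which is precisely the content the paper develops in its Section 2; filling those in would complete your proof. You also correctly identify the alternative of proving only the $C^{1,\alpha}$ estimate and taking tangential derivatives afterwards --- this is exactly what the paper does in Theorem \ref{HOBH straight}.
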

The strategy to obtain Theorem \ref{HOBH} is to straighten the boundary $\Gamma$ by the coordinate change
\begin{equation}\label{ytox abuse}
    (x',x_{n})\to(x',x_{n}-\Gamma(x')),
\end{equation}
and investigate the degenerate equation satisfied by $w$. By a Schauder type estimate of $w$ (see Theorem \ref{degenerateschauder} below), we can prove that if $\Gamma\in C^{1,\alpha}$, then $w\in C^{1,\alpha}$. The general result can be deduced from the $C^{1,\alpha}$ estimate by taking tangential derivatives.

\subsection{A degenerate equation} In our setting $L=div(A\cdot\nabla)$ is a uniformly-elliptic divergence operator. If $A,\nabla_{x'}\Gamma\in C^{\alpha}$, then after the coordinate change \eqref{ytox abuse}, $u_{i}$ solves the equation $\ds div(A\cdot\nabla u_{i})=0$ with $A\in C^{\alpha}(B_{1}^{+})$. The ratio $\ds w=\frac{u_{1}}{u_{2}}$ satisfies
\begin{equation*}
    div(u_{2}^{2}A\cdot\nabla w)=0
\end{equation*}
as long as $A$ is symmetric. This equation can be turned into a model degenerate equation
\begin{equation}\label{main}
    div(x_{n}^{2}A\cdot\nabla w)=div(x_{n}^{2}\vec{f})\quad\mbox{in}\ B_{1}^{+}.
\end{equation}

The equation \eqref{main} belongs to the class of general $x_{n}^{s}$-weight equations
\begin{equation}\label{sweight}
    div(x_{n}^{s}A\cdot\nabla w)=div(x_{n}^{s}\vec{f}).
\end{equation}
When $s\in(-1,1)$, $x_{n}^{s}$ is an $A_{2}$-weight, which were studied extensively in the literature, see \cite{F82a},\cite{F82b}. There are also results outside this range. For example, in \cite{DP20} and \cite{DP21} Dong and Phan gave a weighted $W^{2,p}$ estimate for the Neumann problem when $s>-1$ and the Dirichlet problem when $s<1$.

In our setting $s=2$, and we prove the following Schauder estimate:
\begin{theorem}\label{degenerateschauder}
Assume $A$ is a $C^{\alpha}$ matrix with $\lambda I\leq A\leq\Lambda I$. If $w\in L^{2}(B_{1}^{+})\cap H^{1}_{loc}(B_{1}^{+})$ satisfies \eqref{main}, then there exists $C=C(n,\alpha,\lambda,\Lambda,\|A\|_{C^{\alpha}(B_{1}^{+})})$, so that
\begin{equation}
    \|w\|_{C^{1,\alpha}(B_{1/2}^{+})}\leq C(\|\vec{f}\|_{C^{\alpha}(B_{1}^{+})}+\|w\|_{L^{2}(B_{1}^{+})}).
\end{equation}
\end{theorem}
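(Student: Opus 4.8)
The plan is to exploit the fact that for the \emph{integer} power $s=2$ the operator $\operatorname{div}(x_n^2A\cdot\nabla\,\cdot)$ is a genuinely uniformly elliptic operator in $n+2$ variables restricted to functions invariant under rotations of the last three. Writing $X=(x',y)\in\mathbb{R}^{n-1}\times\mathbb{R}^3$ and $W(x',y):=w(x',|y|)$, the identity $dX=\omega_2|y|^2\,d|y|\,dx'$ on rotationally symmetric integrands gives $\int_{\mathcal B_1}|\nabla W|^2=\omega_2\int_{B_1^+}x_n^2|\nabla w|^2$ and $\int_{\mathcal B_1}W^2=\omega_2\int_{B_1^+}x_n^2w^2\le\omega_2\|w\|_{L^2(B_1^+)}^2$ for the unit ball $\mathcal B_1\subset\mathbb{R}^{n+2}$, while $w$ solving \eqref{main} corresponds to $W$ solving $\operatorname{div}(\widetilde A\,\nabla W)=\operatorname{div}\widetilde F$ in $\mathcal B_1$. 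Here $\widetilde A(x',y)$ carries $A'$ on the $x'$-block, $a_{nn}|y|^{-2}yy^{T}$ on the $y$-block, $|y|^{-1}a'_n\otimes y$ on the cross terms, and any fixed uniformly elliptic tensor (say $\tfrac{\lambda+\Lambda}{2}I_3$) on the spherical $y$-directions that $W$ does not see, and $\widetilde F=(x_n^2f',\,x_n^2f_n|y|^{-1}y)$; the tensor $\widetilde A$ is bounded and uniformly elliptic with the same constants up to a fixed factor, and $\widetilde F\in L^\infty$, although $\widetilde A$ is only $C^\alpha$ away from the axis $\{y=0\}$. The first technical point is that a weak solution in the sense of the theorem — test functions in $C_c^\infty(B_1^+)$, not touching $\{x_n=0\}$ — lifts to an honest weak solution of the $\mathbb{R}^{n+2}$ equation: a weighted Caccioppoli inequality (test \eqref{main} against $\eta_\varepsilon^2w$ with $\eta_\varepsilon$ also cutting off $\{x_n<\varepsilon\}$; the offending term $\int_{\{x_n<\varepsilon\}}x_n^2|\partial_n\eta_\varepsilon|^2w^2\lesssim\int_{\{x_n<\varepsilon\}}w^2\to0$) shows $W\in H^1_{loc}(\mathcal B_1)$ up to the axis, after which the vanishing $H^1$-capacity of $\{y=0\}$ (codimension $3$) removes it from the equation.

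Once $W$ is a genuine solution in $\mathcal B_1$, I apply the \emph{standard}, unweighted theory there. De Giorgi--Nash--Moser — legitimate since $\widetilde A$ is merely bounded measurable — gives $W\in C^{\gamma}(\mathcal B_{3/4})$, i.e.\ $w\in C^\gamma(\overline{B^+_{3/4}})$, with right-hand side $\|\vec f\|_{L^\infty}+\|w\|_{L^2}$; this dispenses with the genuinely degenerate difficulty, namely obtaining \emph{any} modulus of continuity up to $\{x_n=0\}$, and everything after is perturbative. For the model $\operatorname{div}(x_n^2\nabla v)=0$ the lift $V$ is \emph{harmonic} in $\mathcal B_1$, hence $\|v\|_{C^{1,\alpha}(B^+_{1/2})}\le\|V\|_{C^{1,\alpha}(\mathcal B_{1/2})}\lesssim\|V\|_{L^2(\mathcal B_1)}\lesssim\|v\|_{L^2(B_1^+)}$, with all higher derivative estimates as well; moreover $V$ is radial and $C^1$ in $y$, so $\nabla_yV(x',0)=0$, i.e.\ $\partial_nv\equiv0$ on $\{x_n=0\}$, and $v$ depends on the bottom variables only through $x'$ and $x_n^2$ to leading orders.

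Finally I upgrade $C^\gamma$ to $C^{1,\alpha}$ by the standard excess-decay (Campanato) iteration based at bottom points. Fixing $x_0\in\{x_n=0\}\cap B_{1/2}$, a block-triangular linear change of variables preserving $\{x_n=0\}$ and the weight normalizes $A(x_0)=I$ — write $A(x_0)=T^{T}T$ with $T$ block-upper-triangular, solving the $x'$-block by Cholesky and the last entry by positivity of the Schur complement, all with constants controlled by $\lambda,\Lambda$ — and with this normalization the frozen model $\operatorname{div}(x_n^2A(x_0)\nabla v)=0$ lifts to a constant-coefficient, hence (after a scalar rescaling in $y$) harmonic, equation in $\mathbb{R}^{n+2}$, so the estimates of the previous paragraph apply to it. Comparing $w$ on $B_r^+(x_0)$ with the solution $v_r$ of the frozen model sharing its boundary values on $\partial B_r^+(x_0)\cap\{x_n>0\}$, the difference is forced by $\operatorname{div}\!\big(x_n^2(A-A(x_0))\nabla w-x_n^2\vec f\big)$, and the weighted Caccioppoli inequality bounds its energy by $r^{2\alpha}\int_{B_{2r}^+(x_0)}x_n^2|\nabla w|^2+r^{\,n+2}\|\vec f\|_{L^\infty}^2$, while the $C^{1,\alpha}$ (indeed $C^\infty$) control of $v_r$ gives the decay $\int_{B_{\theta r}^+(x_0)}x_n^2|\nabla(v_r-L)|^2\lesssim\theta^{\,n+2+2\alpha}\int_{B_r^+(x_0)}x_n^2|\nabla v_r|^2$ for the affine $L(x)=v_r(x_0)+\nabla'v_r(x_0)\cdot(x'-x_0')$. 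Because $r^{-(n+2)}\int_{B_r^+(x_0)}x_n^2|\nabla(w-L)|^2$ is, up to $\omega_2$, the Campanato excess of $W$ on $\mathcal B_r(X_0)\subset\mathbb{R}^{n+2}$, combining these in the usual way yields an inequality of the form $E(\theta r)\le C\theta^{2\alpha}E(r)+Cr^{2\alpha}$ for the excess; fixing $\theta$ small, iterating, and tracking the affine pieces gives $E(r)\lesssim r^{2\alpha}$ uniformly in $x_0$, hence $W\in C^{1,\alpha}$ near the axis, i.e.\ $w\in C^{1,\alpha}$ up to $\{x_n=0\}$ (with $\partial_nw=0$ there); patching with the classical interior Schauder estimate on $\{x_n\ge\rho/2\}\cap B_{1/2}^+$, where $x_n^2A$ is uniformly elliptic and $C^\alpha$, completes the proof. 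The main obstacles I anticipate are the rigorous lift — the Caccioppoli-plus-removability bookkeeping and the correct form of $\widetilde A$ near the axis — and running the Campanato iteration purely in the weighted energy: one must have the $C^\gamma$ bound in hand before the freezing term $r^{2\alpha}\int x_n^2|\nabla w|^2$ can be absorbed without circularity, and one must keep the exponent $n+2+2\alpha$ in the model decay so that the geometric factor $\theta^{2\alpha}$ genuinely beats $1$.
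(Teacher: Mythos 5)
Your approach is genuinely different from the paper's, and the central device is clean and correct: for the integer weight $s=2$, the equation $\operatorname{div}(x_n^2 A\nabla w)=\operatorname{div}(x_n^2\vec f)$ lifts to a uniformly elliptic, bounded-measurable-coefficient equation for $W(x',y)=w(x',|y|)$ in $\mathbb{R}^{n-1}\times\mathbb{R}^3$, with the extension of $\widetilde A$ to the non-radial $y$-directions chosen at will and the removability of the codimension-$3$ axis handled by the weighted Caccioppoli plus zero $H^1$-capacity. The paper instead exploits $s=2$ through the identity $\Delta(x_n h)=x_n\bigl(\Delta h+\tfrac{2}{x_n}\partial_n h\bigr)$ — multiplying by $x_n$ and solving a Dirichlet problem for $H=x_n h$ in the half-ball — and then runs a Campanato scheme in the \emph{unweighted} $L^2$-excess $\frac{1}{\rho^{n+2+2\alpha}}\int_{B_\rho^+}|w-P|^2$ rather than in the weighted gradient energy. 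What your lift buys is a genuinely uniformly elliptic equation to which De~Giorgi--Nash--Moser applies directly, giving a free $C^\gamma$ modulus up to $\{x_n=0\}$ that the paper never needs to invoke; what the paper buys by sticking with $H=x_n h$ is that it never has to worry about the discontinuity of $\widetilde A$ and $\widetilde F$ along the axis. Both run the same harmonic-replacement/Campanato machinery afterward, and your observation that the frozen model $\operatorname{div}(x_n^2A(x_0)\nabla v)=0$ can be reduced to $\operatorname{div}(x_n^2\nabla v)=0$ by a weight-preserving block-triangular change of variables is exactly the normalization needed.

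Two small slips you should fix. First, $\widetilde F$ should be $(f',\,f_n\,y/|y|)$ without the $x_n^2$ factor; the weight is entirely absorbed by $dX=\omega_2 x_n^2\,dx_n\,dx'$. Second, and more substantively, as written your Campanato source term $r^{n+2}\|\vec f\|_{L^\infty}^2$ does not produce the inhomogeneous $Cr^{2\alpha}$ you then claim in $E(\theta r)\le C\theta^{2\alpha}E(r)+Cr^{2\alpha}$: dividing by $r^{n+2}$ leaves a non-decaying constant $\|\vec f\|_{L^\infty}^2$, so the iteration would only yield boundedness of the excess, not decay. You need to normalize $\vec f(x_0)=0$ at each base point (as the paper does by subtracting the linear function with $A(0)\nabla L=\vec f(0)$), or equivalently build $\vec f(x_0)$ into the comparison problem, so that the source is $\bigl\|(A-A(x_0))\nabla w-(\vec f-\vec f(x_0))\bigr\|^2\lesssim r^{2\alpha}$ and the inhomogeneous term is $r^{n+2+2\alpha}[\vec f]_{C^\alpha}^2$. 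Relatedly, the model decay exponent should be stated as strictly larger than $n+2+2\alpha$ (the harmonic model gives $n+4$), since a borderline $\theta^{2\alpha}$ contraction with a constant $C>1$ in front degrades the exponent; you have plenty of room, but the remark ``so that $\theta^{2\alpha}$ genuinely beats $1$'' misses where the margin actually comes from. With those adjustments the argument closes.
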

We prove Theorem \ref{degenerateschauder} through the standard Campanato iteration, together with the weighted Poincare and Caccioppoli inequality. The method can be easily generalized to equation \eqref{sweight} with $s>1$.

We remark that we cannot pose a Dirichlet boundary problem to \eqref{main} at the planar boundary $B_{1}'=B_{1}\cap\{x_{n}=0\}$. Instead, $w\in L^{2}(B_{1}^{+})$ can be viewed as a Neumann boundary data of \eqref{main} at $B_{1}'$. More generally, the Neumann boundary data of \eqref{sweight} takes the form $\ds\lim_{x_{n}\to0}x_{n}^{s-1}w=0$ in trace sense.

\subsection{Applications} We can use the higher order boundary Harnack to study the regularity in obstacle problems.

Assume that $\lambda I\leq A\leq\Lambda I$ is analytic. Let $U\geq0$ be a $C^{1,\alpha}_{loc}$ solution of the classical obstacle problem
\begin{equation}\label{U in y abuse}
    div(A\cdot\nabla U)=\chi_{\{U>0\}},
\end{equation}
 where $0<\alpha<1$ can be arbitrarily closed to $1$. If a sub-sequence of the blow up at $x_{0}\in\Gamma$
\begin{equation}
    U_{r,x_{0}}(x):=\frac{U(rx+x_{0})}{r^{2}}
\end{equation}
converges uniformly locally to a half-plane solution like $\ds\frac{k}{2}|(x\cdot e)^{+}|^{2}$ as $r\searrow0$, then $x_{0}$ is called a regular free boundary point of $U$. The set of all regular free boundary points is referred to as the regular free boundary.

In the seminal paper \cite{Caff77} Caffarelli first proved that the free boundary
\begin{equation*}
    \Gamma:=\partial\{U>0\}
\end{equation*}
is a $C^{0,1}$ graph near a regular free boundary point $X_{0}$, and all the free boundary points near $X_{0}$ are regular. If $\Gamma$ is Lipschitz, since $\ds\frac{U_{x_{i}}}{U_{x_{n}}}$ represents the slope of level sets of $U$, a boundary Harnack estimate of this ratio yields that $\Gamma\in C^{1,\alpha}$. The higher regularity and analyticity of $\Gamma$ was established by Kinderlehrer and Nirenberg in \cite{KN77} using a hodograph-Legendre transformation.

A second approach is to inductively apply higher order boundary Harnack to obtain higher regularity of $\Gamma$. The idea is quite simple: assume that $\Gamma\in C^{k,\alpha}$, then Theorem \ref{HOBH} applied to $\ds w=\frac{U_{x_{i}}}{U_{x_{n}}}$ implies that $\partial_{i}\Gamma\in C^{k,\alpha}$, hence $\Gamma\in C^{k+1,\alpha}$. For example, in \cite{DS14} (or \cite{DS14thin} for the thin obstacle problem) De Silva and Savin proved that the free boundary $\Gamma\in C^{\infty}$ in a small neighbourhood of a regular free boundary point.

In this paper, we first straighten the boundary $\Gamma$ using \eqref{ytox abuse}, and inductively apply Theorem \ref{degenerateschauder} to $\ds D^{k}_{T}w=D^{k}_{T}(\frac{U_{x_{i}}}{U_{x_{n}}})$. With more careful computations, we provide an alternative proof of the analyticity of the free boundary.
\begin{theorem}\label{analytic}
    Let $U$ be a solution of \eqref{U in y abuse} with $\lambda I\leq A\leq\Lambda I$ being analytic. If $0\in\Gamma$ is a regular free boundary point, then $\Gamma$ is an analytic curve near $0$.
\end{theorem}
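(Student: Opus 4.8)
The plan is to obtain analyticity as the limiting case of the $C^{k,\alpha}$ regularity scheme, by tracking the growth of constants in the iteration. After the coordinate change \eqref{ytox abuse} straightening $\Gamma$, the ratios $w^{(i)}=U_{x_i}/U_{x_n}$ (for $i=1,\dots,n-1$) satisfy a degenerate equation of the form \eqref{main} with coefficient matrix and right-hand side built from $A$ and the derivatives of $\Gamma$. Differentiating \eqref{main} $k$ times in the tangential directions produces, for $v=D_T^k w^{(i)}$, an equation
\begin{equation}\label{plan-diff}
    \operatorname{div}(x_n^2 A\cdot\nabla v)=\operatorname{div}(x_n^2 \vec{g}_k),
\end{equation}
where $\vec{g}_k$ is a universal polynomial expression in the lower-order tangential derivatives of $w^{(i)}$, of $A$, and of $\Gamma$. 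Theorem \ref{degenerateschauder} applied to \eqref{plan-diff} then upgrades $D_T^k w^{(i)}\in C^{1,\alpha}$, i.e.\ $D_T^{k+1}w^{(i)}\in C^{\alpha}$, which by the identification of $w^{(i)}$ with the slope of level sets of $U$ yields $\Gamma\in C^{k+2,\alpha}$. (The bootstrap is legitimate because a regular free boundary point is, by Caffarelli's theorem, already a point near which $\Gamma$ is Lipschitz, and the classical boundary Harnack of De Silva--Savin gives the base case $\Gamma\in C^{1,\alpha}$.)

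The heart of the matter is \emph{quantitative}: I would set up norms $M_k=\|D_T^k w^{(i)}\|_{C^{\alpha}(B_{\rho_k}^+)}$ on a nested sequence of half-balls $B_{\rho_k}^+$ with $\rho_k\searrow \rho_\infty>0$ (losing a geometric factor of radius at each step, e.g.\ $\rho_k-\rho_{k+1}\sim 2^{-k}$), and show by induction that $M_k\le C^{k+1}\,k!$ for a fixed constant $C$ depending only on $n,\alpha,\lambda,\Lambda$ and the analyticity radius of $A$. The inductive step combines three ingredients: (1) the constant $C$ from Theorem \ref{degenerateschauder} is \emph{independent of $k$}; (2) the interpolation/covering cost of shrinking the domain contributes a factor polynomial in $k$ together with $(\rho_k-\rho_{k+1})^{-1}$, which is only geometric in $k$; and (3) the combinatorial bound for $\|\vec{g}_k\|_{C^\alpha}$ coming from the Leibniz rule applied to the polynomial nonlinearity, which is where the factorial growth genuinely enters. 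Here one uses that $A$ and $1/U_{x_n}$ (equivalently, $U_{x_n}$ is bounded below near a regular point, after dividing out the homogeneity) are real-analytic, so their tangential derivatives obey $\|D_T^j(\cdot)\|\le C^{j+1}j!$; the standard estimates on products and compositions of such sequences (Cauchy products of majorant series) then propagate the bound $M_k\le C^{k+1}k!$. Finally, $w^{(i)}$ itself being real-analytic in the tangential variables on $B_{\rho_\infty}'$ — which follows from $\sum M_k t^k/k!$ having positive radius of convergence — and the fact that $\partial_i\Gamma$ is a fixed smooth function of the boundary trace of the $w^{(i)}$'s, gives that $\Gamma$ is real-analytic near $0$.

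The main obstacle I anticipate is ingredient (3): organizing the right-hand side $\vec{g}_k$ so that the combinatorics are transparent. The expression for $\vec{g}_k$ involves not just $D_T^k$ of a product but the accumulated effect of $k$ successive differentiations of an equation whose coefficients themselves depend on $\Gamma$ (which is what we are solving for), so there is a genuine coupling: the bound on $\|\Gamma\|_{C^{k+2,\alpha}}$ feeds into the bound on the coefficients of the equation for $D_T^{k+1}w^{(i)}$. One must therefore run a \emph{simultaneous} induction on the analytic majorants of $\Gamma$ and of the $w^{(i)}$'s, choosing the constant $C$ large enough at the outset that both recursions close. The secondary technical point is the degenerate weight: the interpolation inequalities used to absorb intermediate tangential derivatives on shrinking domains must be the $x_n^2$-weighted versions, so one needs weighted Poincaré/interpolation estimates uniform in the shrinking radius — these are of the same flavor as the tools already invoked for Theorem \ref{degenerateschauder} and should not introduce new $k$-dependence. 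Once the majorant bookkeeping is in place, analyticity follows in the standard way.
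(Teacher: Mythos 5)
Your overall strategy --- straighten the boundary, iterate the weighted Schauder estimate of Theorem~\ref{degenerateschauder} on tangential derivatives, and track the growth of the derivative norms via majorant series until they sum to a convergent power series --- is the same plan the paper follows. However, there is a genuine gap in your bookkeeping. You propose to run the simultaneous induction on majorants of $\Gamma$ and of the ratios $w^{(i)}$, and in ingredient (3) you appeal to the fact that $1/U_{x_n}$ is real-analytic with $\|D_T^j(1/U_{x_n})\|\le C^{j+1}j!$. That is circular: the analyticity of $U_{x_n}$ is part of the conclusion, and it does not follow from bounds on $\Gamma$ and on $w=U_{x_i}/U_{x_n}$ alone. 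After straightening, the equation \eqref{equationw} for $w$ has coefficient matrix $u_n^2 a^{ij}$ and a right-hand side built from products of the $u_q$'s (and of $u_q/x_n$), and none of these is determined by $\Gamma$ and $w$; you genuinely need to control the second derivatives of $U$ in their own right.

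The paper closes the loop by tracking \emph{three} coupled objects, two of them explicitly: it pairs the degenerate equation \eqref{system} for $w$ with the non-degenerate divergence-form equation \eqref{system2} satisfied by $u=\nabla U$ after straightening, introduces two majorant power series $\Pi(t)=P_0[\nabla u](t)$ and $\Omega(t)=P_1[w](t)$, shows (Lemma~\ref{AAFGH}) that all coefficients $A,\widetilde A,\vec F,G,\vec H$ are majorized by analytic functions of $(t,\Pi,\Omega)$, deduces the coupled ODE-majorant system $\frac{d}{dt}\Omega\ll\mathcal M(t,\Pi,\Omega)$, $\frac{d}{dt}\Pi\ll\mathcal N(t,\Pi,\Omega)$ from the global Schauder estimates (Propositions~\ref{globalschauder}, \ref{globaluniformschauder}), and concludes by Cauchy--Kovalevskaya for ODEs. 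The quantity $\Gamma$ never needs to be tracked separately because $P_0[\nabla\Gamma]\ll P_1[w]=\Omega$ (Lemma~\ref{reducegamma}); it is $u$, not $\Gamma$, that you are missing. A secondary point: your shrinking scheme $\rho_k-\rho_{k+1}\sim 2^{-k}$ would have to be checked rather carefully; the paper instead works with balls of radius $\Delta/(k+1)$ where $\Delta$ is the distance to $\partial B_1$, which is exactly what makes the $k$-th Schauder constant grow only like $(k+2)$ and lets the combinatorics of Lemma~\ref{linearrule} close.
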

We follow Blatt's method of \cite{blatt} and construct power series encoding the higher regularity of $u_{i}=U_{x_{i}}$ and $\ds w_{i}=\frac{U_{x_{i}}}{U_{x_{n}}}$. The elliptic estimates give inductive inequalities of the higher regularity, which could be turned into an ODE inequality system about the two power series.

\subsection{More comments} 
The study of weighted degenerate equations like \eqref{main} can be used to study other problems like the Alt-Phillips free boundary problems, or the thin obstacle problem. We intend to use the methods developed in this paper to study the analyticity in these problems. This would provide a different proof of the analyticity in the thin obstacle problem, which was established in \cite{KPS14} by Koch, Petrosyan and Shi.

While working on this note, the preprint \cite{TTV} by Terracini, Tortone and Vita appeared, in which similar Schauder estimates for weighted equations \eqref{sweight} when $s>-1$ are established.

\section{A weighted Sobolev space}
\subsection{Poincare inequality}Let's first establish a Poincare type inequality (Proposition \ref{regular poincare}) which fits the degenerate equation.
\begin{lemma}\label{L1poin}
If $w\in C^{1}_{loc}(\mathbb{R}^{n}_{+})$ is compactly supported, which means there exists $R>0$ such that $w\Big|_{|x|>R}=0$, then we have the inequality
\begin{equation}\label{L1 poincare}
    \int_{\mathbb{R}^{n}_{+}}|w|dx\leq\int_{\mathbb{R}^{n}_{+}}x_{n}|\partial_{n}w|dx.
\end{equation}
\end{lemma}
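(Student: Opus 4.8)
The plan is to prove the one-dimensional inequality in the $x_n$ variable first and then integrate over the tangential variables $x'$. Fix $x'\in\mathbb{R}^{n-1}$ and consider the function $t\mapsto w(x',t)$ on $(0,\infty)$, which is $C^1$ and vanishes for $t>R$. For each such $t$ we can write $w(x',t)=-\int_t^\infty \partial_n w(x',s)\,ds$, so that $|w(x',t)|\leq \int_t^\infty |\partial_n w(x',s)|\,ds$. Integrating in $t$ over $(0,\infty)$ and applying Fubini's theorem to the resulting double integral gives
\begin{equation*}
    \int_0^\infty |w(x',t)|\,dt \leq \int_0^\infty\!\!\int_t^\infty |\partial_n w(x',s)|\,ds\,dt = \int_0^\infty s\,|\partial_n w(x',s)|\,ds,
\end{equation*}
since the region $\{0<t<s\}$ contributes a factor $s$ after swapping the order of integration. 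This is exactly the desired estimate in the last coordinate, with the weight $x_n=s$ appearing naturally from the length of the interval $(0,s)$.

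Next I would integrate this inequality over $x'\in\mathbb{R}^{n-1}$. Because both sides are nonnegative and measurable, Tonelli's theorem lets me recombine the iterated integrals into integrals over $\mathbb{R}^n_+$, yielding
\begin{equation*}
    \int_{\mathbb{R}^n_+}|w|\,dx = \int_{\mathbb{R}^{n-1}}\!\!\int_0^\infty |w(x',t)|\,dt\,dx' \leq \int_{\mathbb{R}^{n-1}}\!\!\int_0^\infty t\,|\partial_n w(x',t)|\,dt\,dx' = \int_{\mathbb{R}^n_+} x_n\,|\partial_n w|\,dx,
\end{equation*}
which is \eqref{L1 poincare}. The compact support hypothesis is what guarantees that the representation $w(x',t)=-\int_t^\infty \partial_n w\,ds$ is valid (the boundary term at $+\infty$ vanishes) and that all the integrals in sight are finite, so Fubini--Tonelli applies without delicacy.

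There is essentially no hard step here: the inequality is a weighted Hardy-type estimate and the whole argument is the fundamental theorem of calculus plus a change in the order of integration. The one point that deserves a word is that no integrability is being claimed at $x_n=0$ beyond what $C^1_{loc}(\mathbb{R}^n_+)$ and compact support already give — since the weight $x_n$ multiplies $|\partial_n w|$ on the right and the left side has no weight, the estimate is genuinely a statement that the $x_n$-weight on the gradient controls the unweighted mass of $w$, and the constant is sharp (equality is approached by functions concentrating near $x_n=0$). If one wants to be careful about $w$ merely being $C^1$ on the open half-space and continuous up to the boundary, one can first prove the inequality on $\{x_n>\varepsilon\}$ for the truncated domain and let $\varepsilon\to 0$ by monotone convergence, but under the stated hypotheses this is not needed.
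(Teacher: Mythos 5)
Your proof is correct and follows exactly the same route as the paper's: fix $x'$, use the fundamental theorem of calculus together with compact support to write $|w(x',t)|\leq\int_t^\infty|\partial_n w(x',s)|\,ds$, then swap the order of integration to produce the weight $x_n$, and finally integrate over $x'$. The extra remarks on sharpness and on a possible $\varepsilon$-truncation are harmless additions but not needed.
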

\begin{proof}
Fix $x'=(x_{1},...,x_{n-1})$, and it suffices to show that for each $x'$,
\begin{equation*}
    \int_{0}^{\infty}|w(x',x_{n})|dx_{n}\leq\int_{0}^{\infty}x_{n}|\partial_{n}w(x',x_{n})|dx_{n}.
\end{equation*}
To show this, we notice that since $w$ is compactly supported,
\begin{equation*}
    |w(x',x_{n})|\leq\int_{x_{n}}^{\infty}|\partial_{n}w(x',y_{n})|dy_{n},
\end{equation*}
and hence by changing the order of integration, we have
\begin{equation*}
    \int_{0}^{\infty}|w|\leq\int_{0}^{\infty}\int_{0}^{y_{n}}|\partial_{n}w(x',y_{n})|dx_{n}dy_{n}=\int_{0}^{\infty}x_{n}|\partial_{n}w(x',x_{n})|dx_{n}.
\end{equation*}
\end{proof}
This lemma is also true for $u\in H^{1}_{loc}(\mathbb{R}^{n}_{+})$ compactly supported. By replacing $w$ with $w^{2}$ in \eqref{L1 poincare}, we obtain the following Poincare inequality.

\begin{proposition}\label{regular poincare}
If $w\in H^{1}_{loc}(\mathbb{R}^{n}_{+})$ is compactly supported, i.e. $w\Big|_{|x|>R}=0$, then
\begin{equation}
\int_{\mathbb{R}^{n}_{+}}w^{2}dx\leq4\int_{\mathbb{R}^{n}_{+}}x_{n}^{2}|\nabla w|^{2}dx.
\end{equation}
\end{proposition}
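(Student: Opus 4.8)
The plan is to deduce Proposition \ref{regular poincare} directly from Lemma \ref{L1poin} by the substitution indicated in the text, namely applying the $L^1$ inequality \eqref{L1 poincare} to the function $w^2$ in place of $w$. First I would note that if $w\in H^1_{loc}(\mathbb{R}^n_+)$ is compactly supported in $\{|x|<R\}$, then $w^2\in W^{1,1}_{loc}(\mathbb{R}^n_+)$ with $\partial_n(w^2)=2w\,\partial_n w$ by the chain rule for Sobolev functions (valid since $t\mapsto t^2$ is $C^1$ with locally bounded derivative and $w\in L^2_{loc}$, $\nabla w\in L^2_{loc}$), and $w^2$ is still supported in $\{|x|<R\}$. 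An approximation remark is in order here: Lemma \ref{L1poin} is stated for $C^1_{loc}$ functions, but the text already observes it extends to compactly supported $H^1_{loc}$ functions by mollification, and the same applies to $W^{1,1}_{loc}$, so I may apply \eqref{L1 poincare} to $w^2$.

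Carrying this out, \eqref{L1 poincare} gives
\begin{equation*}
    \int_{\mathbb{R}^n_+} w^2\,dx \;=\; \int_{\mathbb{R}^n_+} |w^2|\,dx \;\leq\; \int_{\mathbb{R}^n_+} x_n\,|\partial_n(w^2)|\,dx \;=\; 2\int_{\mathbb{R}^n_+} x_n\,|w|\,|\partial_n w|\,dx.
\end{equation*}
Then I would apply the Cauchy–Schwarz inequality to the right-hand side, splitting $x_n|w||\partial_n w| = |w|\cdot\bigl(x_n|\partial_n w|\bigr)$, to obtain
\begin{equation*}
    2\int_{\mathbb{R}^n_+} x_n\,|w|\,|\partial_n w|\,dx \;\leq\; 2\Bigl(\int_{\mathbb{R}^n_+} w^2\,dx\Bigr)^{1/2}\Bigl(\int_{\mathbb{R}^n_+} x_n^2\,|\partial_n w|^2\,dx\Bigr)^{1/2}.
\end{equation*}
Writing $I = \int w^2\,dx$ and $J = \int x_n^2|\partial_n w|^2\,dx$, the chain of inequalities reads $I \leq 2\sqrt{I}\sqrt{J}$. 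If $I=0$ the claimed bound is trivial; otherwise dividing by $\sqrt{I}$ gives $\sqrt{I}\leq 2\sqrt{J}$, hence $I\leq 4J$, which is exactly the assertion since $|\partial_n w|^2\leq|\nabla w|^2$.

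There is essentially no serious obstacle here; the only point requiring a word of care is the justification that the chain rule and the $L^1$ Poincaré inequality are legitimate for the function $w^2$ when $w$ is merely $H^1_{loc}$ and compactly supported — this is handled by the standard mollification argument already alluded to after Lemma \ref{L1poin}, noting that truncation is unnecessary because compact support is preserved under squaring. One should also observe that all integrals appearing are finite: $I<\infty$ since $w\in L^2$ with compact support, and $J<\infty$ since $x_n$ is bounded on the support and $\nabla w\in L^2_{loc}$; if $J=\infty$ the inequality is vacuous. Thus the proposition follows immediately.
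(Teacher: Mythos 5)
Your proof is correct and is exactly the argument the paper intends: the paper's one-line remark ``replacing $w$ with $w^2$ in \eqref{L1 poincare}'' unpacks to precisely your chain --- apply the $L^1$ inequality to $w^2$, use $\partial_n(w^2)=2w\,\partial_n w$, then Cauchy--Schwarz to get $I\le 2\sqrt{I}\sqrt{J}$ and hence $I\le 4J$. Your additional remarks on the chain rule in $H^1_{loc}$, the mollification step, and finiteness of the integrals are appropriate care, not a deviation from the paper's route.
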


It's thus quite natural to define a weighted Sobolev space:
\begin{definition}
    The space $H^{1}(B_{r}^{+},x_{n}^{2}dx)$ is the set of $H^{1}_{loc}$ functions $w$ such that
\begin{equation*}
    \int_{B_{r}^{+}}x_{n}^{2}|\nabla w|^{2}dx+\int_{B_{r}^{+}}w^{2}dx<\infty.
\end{equation*}
\end{definition}
The space $H^{1}(B_{r}^{+},x_{n}^{2}dx)$ is a Hilbert space by defining the square norm as
\begin{equation*}
    \|w\|_{H^{1}(B_{r}^{+},x_{n}^{2}dx)}^{2}:=\int_{B_{r}^{+}}x_{n}^{2}|\nabla w|^{2}dx+\int_{B_{r}^{+}}w^{2}dx.
\end{equation*}

For each $w\in H^{1}(B_{r}^{+},x_{n}^{2}dx)$, there exists $w_{i}\in H^{1}(B_{r}^{+},x_{n}^{2}dx)\cap C^{\infty}(\overline{B_{r}^{+}})$, so that
\begin{equation*}
    \int_{B_{r}^{+}}x_{n}^{2}|\nabla(w-w_{i})|^{2}dx+\int_{B_{r}^{+}}(w-w_{i})^{2}dx\to0.
\end{equation*}

\subsection{Caccioppoli inequalities}Now, let's also develop two Caccioppoli inequalities for the degenerate model \eqref{main}. First we establish an interior estimate.

\begin{proposition}\label{caccioppoli}
Assume that $\lambda I\leq A\leq\Lambda I$. If $w\in H^{1}_{loc}(B_{r}^{+})$ is a solution of \eqref{main}, then there exists $C=C(n,\lambda,\Lambda)$ so that
\begin{equation}
    \int_{B_{r/2}^{+}}x_{n}^{2}|\nabla w|^{2}dx\leq C\Big\{\int_{B_{r}^{+}}x_{n}^{2}|\vec{f}|^{2}dx+\int_{B_{r}^{+}}|w|^{2}dx\Big\}.
\end{equation}
\end{proposition}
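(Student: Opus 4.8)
The plan is to use the standard Caccioppoli test-function argument adapted to the weight $x_n^2$. Let $\eta$ be a smooth cutoff with $\eta\equiv1$ on $B_{r/2}$, $\eta\equiv0$ outside $B_{3r/4}$, $0\le\eta\le1$, and $|\nabla\eta|\le C/r$. The test function will be $\varphi=\eta^2 w$; this is admissible because $w\in H^1_{loc}(B_r^+)$ and $\eta$ is supported compactly in $B_r$, so $\varphi\in H^1$ with support in $B_{3r/4}$ (note the support stays away from the curved part of $\partial B_r^+$ but does touch $\{x_n=0\}$, which is fine since no boundary term arises there — the weight $x_n^2$ degenerates and, as the excerpt remarks, $w\in L^2$ plays the role of the Neumann data). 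Testing \eqref{main} against $\varphi$ and integrating by parts gives
\begin{equation*}
\int_{B_r^+}x_n^2\,\eta^2\,A\nabla w\cdot\nabla w\,dx=-\int_{B_r^+}x_n^2\,(A\nabla w\cdot\nabla(\eta^2))\,w\,dx+\int_{B_r^+}x_n^2\,\vec f\cdot\nabla(\eta^2 w)\,dx.
\end{equation*}

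Next I would estimate the three resulting terms. Using ellipticity on the left, $\int x_n^2\eta^2 A\nabla w\cdot\nabla w\ge\lambda\int x_n^2\eta^2|\nabla w|^2$. For the first term on the right, expand $\nabla(\eta^2)=2\eta\nabla\eta$ and apply Cauchy–Schwarz with a weight $\epsilon$: $|\int x_n^2 A\nabla w\cdot 2\eta\nabla\eta\,w|\le\epsilon\int x_n^2\eta^2|\nabla w|^2+C_\epsilon\Lambda^2\int x_n^2|\nabla\eta|^2 w^2$, and since $x_n\le r$ on the support and $|\nabla\eta|\le C/r$, the last factor is controlled by $C\int_{B_r^+}w^2$. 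For the forcing term, write $\nabla(\eta^2 w)=\eta^2\nabla w+2\eta w\nabla\eta$ and again use Young's inequality to absorb $\epsilon\int x_n^2\eta^2|\nabla w|^2$ into the left side, leaving $C_\epsilon\int x_n^2|\vec f|^2$ plus another $C\int_{B_r^+}w^2$-type term (after using $x_n^2|\nabla\eta|^2\le C$). Choosing $\epsilon$ small enough relative to $\lambda$ and absorbing, I obtain $\int_{B_{r/2}^+}x_n^2|\nabla w|^2\le C(n,\lambda,\Lambda)\{\int_{B_r^+}x_n^2|\vec f|^2+\int_{B_r^+}w^2\}$.

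The one point requiring care — and the part I expect to be the main obstacle — is justifying that $\varphi=\eta^2 w$ is a legitimate test function in the weak formulation of \eqref{main} all the way up to the flat boundary $\{x_n=0\}$, i.e.\ that no boundary contribution appears there when integrating by parts. Since $w$ is only assumed $H^1_{loc}(B_r^+)$ (not up to $x_n=0$), the cleanest route is to first test against $\eta^2 w\,\chi_{\{x_n>\delta\}}$-type functions, or more simply to note that the weak formulation of \eqref{main} is posed with test functions in the weighted space $H^1(B_r^+,x_n^2dx)$ of the earlier definition, and $\eta^2 w$ lies in that space provided $\int_{B_r^+}x_n^2|\nabla(\eta^2 w)|^2<\infty$ — which is itself essentially what we are proving. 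The standard fix is the usual bootstrap: run the computation with $\eta^2 w_\delta$ where $w_\delta$ is a truncation/mollification giving a priori finiteness, derive the bound with constants independent of $\delta$, then pass to the limit; alternatively use two cutoffs (one in $x_n$) and let the $x_n$-cutoff tend to the boundary, checking the extra term vanishes because $x_n^2\cdot(\text{gradient of the }x_n\text{-cutoff})\to0$ in the relevant integral since the weight beats the blow-up. Once admissibility is secured, the rest is the routine absorption argument sketched above.
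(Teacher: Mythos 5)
Your argument follows essentially the same route as the paper's: test the weak formulation against $\eta^2 w$, use ellipticity and Young's inequality, and handle the admissibility at $\{x_n=0\}$ by inserting a second cutoff $\eta_h$ in the $x_n$-direction and passing to the limit $h\to 0$. One small imprecision in your second fix: the heuristic that the extra term vanishes because \emph{the weight beats the blow-up} is not quite the mechanism — with $|\nabla\eta_h|\le C/h$ supported in $\{x_n\le h\}$ one has $x_n^2|\nabla\eta_h|^2\le C$ there (the weight exactly cancels the blow-up, it does not beat it), and the paper simply uses this uniform bound together with Fatou's lemma on the left-hand side $\int_{B_{r/2}^+\cap\{x_n\ge h\}}x_n^2|\nabla w|^2$, rather than asking the error integral to tend to zero; that integral does in fact tend to zero, but only because the support $\{x_n\le h\}$ shrinks and $w\in L^2$, not because the integrand is small.
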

\begin{proof}
Let $\varphi(x),\eta_{h}(x)$ be two positive smooth functions satisfying
\begin{align}
    &\varphi\Big|_{|x|\leq r/2}=1,\quad\varphi\Big|_{|x|\geq r}=0,\quad|\nabla\varphi|\leq\frac{C}{r},\\
    &\eta_{h}\Big|_{x_{n}\geq h}=1,\quad\eta_{h}\Big|_{x_{n}\leq0}=0,\quad|\nabla\eta_{h}|\leq\frac{C}{h}.
\end{align}
We also denote $\varphi_{h}=\varphi\cdot\eta_{h}$ and it's not hard to show that $x_{n}^{2}|\nabla\varphi_{h}|^{2}\leq C$ inside $B_{r}^{+}$. Now multiplying $\varphi_{h}^{2}w$ on both sides of \eqref{main}, integration by parts implies
\begin{equation*}
\int_{B_{r}^{+}}x_{n}^{2}A\Big(\nabla(\varphi_{h}w),\nabla(\varphi_{h}w)\Big)-\int_{B_{r}^{+}}x_{n}^{2}w^{2}A(\nabla\varphi_{h},\nabla\varphi_{h})=\int_{B_{r}^{+}}x_{n}^{2}\vec{f}\cdot\nabla(\varphi_{h}^{2}w).
\end{equation*}
As $\lambda I\leq A\leq\Lambda I$ and $x_{n}^{2}|\nabla\varphi_{h}|^{2}\leq C$, we have
\begin{align*}
\int_{B_{r}^{+}}x_{n}^{2}A\Big(\nabla(\varphi_{h}w),\nabla(\varphi_{h}w)\Big)\geq&\lambda\int_{B_{r}^{+}}x_{n}^{2}|\nabla(\varphi_{h}w)|^{2},\\
0\leq\int_{B_{r}^{+}}x_{n}^{2}w^{2}A(\nabla\varphi_{h},\nabla\varphi_{h})\leq&C\cdot\Lambda\int_{B_{r}^{+}}w^{2}.
\end{align*}
Besides, as $0\leq\varphi_{h}\leq1$,
\begin{equation*}
|\int_{B_{r}^{+}}x_{n}^{2}\vec{f}\cdot\nabla(\varphi_{h}^{2}w)|\leq\frac{\varepsilon}{2}\int_{B_{r}^{+}}x_{n}^{2}|\nabla(\varphi_{h}w)|^{2}+C\varepsilon\int_{B_{r}^{+}}w^{2}+\frac{1}{\varepsilon}\int_{B_{r}^{+}}x_{n}^{2}|\vec{f}|^{2}.
\end{equation*}
Putting those estimates together yields that when $\varepsilon=\varepsilon(n,\lambda,\Lambda)$,
\begin{align*}
\int_{B_{r/2}^{+}\cap\{x_{n}\geq h\}}x_{n}^{2}|\nabla w|^{2}\leq\int_{B_{r}^{+}}x_{n}^{2}|\nabla(\varphi_{h}w)|^{2}\leq C\Big\{\int_{B_{r}^{+}}w^{2}+\int_{B_{r}^{+}}x_{n}^{2}|\vec{f}|^{2}\Big\}.
\end{align*}
Finally, we just need to apply Fatou lemma to $\ds x_{n}^{2}|\nabla w|^{2}=\lim_{h\to0}x_{n}^{2}|\nabla w|^{2}\chi_{\{x_{n}\geq h\}}$.
\end{proof}
\begin{remark}
    A more careful computation yields $\ds x_{n}^{2}|\nabla\varphi_{h}|^{2}\leq C\Big(\frac{x_{n}^{2}}{r^{2}}+\chi_{\{x_{n}\leq h\}}\Big)$, so if $\ds\int_{B_{r}^{+}}w^{2}dx<\infty$, we can get a more accurate Caccioppoli inequality:
    \begin{equation}
    \int_{B_{r/2}^{+}}x_{n}^{2}|\nabla w|^{2}dx\leq C\Big\{\int_{B_{r}^{+}}x_{n}^{2}|\vec{f}|^{2}dx+\frac{1}{r^{2}}\int_{B_{r}^{+}}x_{n}^{2}|w|^{2}dx\Big\}.
\end{equation}
\end{remark}

Next, here is a global inequality for solutions of \eqref{main} with zero boundary data.

\begin{proposition}\label{dirichlet}
Assume that $\lambda I\leq A\leq\Lambda I$. If $w\in L^{2}(B_{r}^{+})\cap H^{1}_{loc}(B_{r}^{+})$ is a solution of \eqref{main} so that $w\Big|_{|x|\geq r}=0$, then there exists $C=C(n,\lambda,\Lambda)$ so that
\begin{equation}
    \int_{B_{r}^{+}}x_{n}^{2}|\nabla w|^{2}dx\leq C\int_{B_{r}^{+}}x_{n}^{2}|\vec{f}|^{2}dx.
\end{equation}
\end{proposition}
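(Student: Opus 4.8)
The plan is to combine the interior Caccioppoli estimate of Proposition \ref{caccioppoli} with the weighted Poincar\'e inequality of Proposition \ref{regular poincare} by testing the equation directly against $w$ itself, rather than against a cutoff of $w$. Since $w$ vanishes for $|x|\geq r$, no spatial cutoff $\varphi$ is needed; the only obstruction to using $w$ as a test function is the lack of a priori control on $\int x_n^2|\nabla w|^2$ near the flat boundary $\{x_n=0\}$, which is precisely what the interior estimate (together with its more accurate version in the Remark) provides.

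First I would verify that $w\in H^1(B_r^+,x_n^2\,dx)$: apply the refined Caccioppoli inequality from the Remark on balls $B_\rho^+$ with $\rho<r$ to bound $\int_{B_{\rho/2}^+}x_n^2|\nabla w|^2$ by $\int_{B_r^+}x_n^2|\vec f|^2 + \rho^{-2}\int_{B_r^+}x_n^2 w^2$, and since $w\in L^2(B_r^+)$ and $\vec f\in L^2$ in the weighted sense, this gives $w\in H^1_{loc}(B_r^+,x_n^2\,dx)$ with the weighted gradient in $L^2$ up to $\{x_n=0\}$; because $w$ is also compactly supported in the $x'$-directions inside $B_r$, a density argument (using the approximation stated after the definition of the weighted Sobolev space, adapted to respect the support condition via a fixed spatial cutoff that equals $1$ on the support of $w$) lets me treat $w$ as an admissible test function. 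Then, multiplying \eqref{main} by $w$ and integrating by parts over $B_r^+$ — the boundary term on $\{x_n=0\}$ vanishes because of the $x_n^2$ weight and the $H^1_{loc}$ regularity, and the boundary term on $\partial B_r^+$ vanishes because $w\equiv 0$ there — yields
\begin{equation*}
\int_{B_r^+}x_n^2 A(\nabla w,\nabla w)\,dx = \int_{B_r^+}x_n^2\,\vec f\cdot\nabla w\,dx.
\end{equation*}

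Next I would estimate: by ellipticity the left side is at least $\lambda\int_{B_r^+}x_n^2|\nabla w|^2$, and by Cauchy--Schwarz with a weight $\varepsilon$ the right side is at most $\frac{\varepsilon}{2}\int_{B_r^+}x_n^2|\nabla w|^2 + \frac{1}{2\varepsilon}\int_{B_r^+}x_n^2|\vec f|^2$. Choosing $\varepsilon=\lambda$ absorbs the gradient term and gives $\int_{B_r^+}x_n^2|\nabla w|^2\leq C(\lambda)\int_{B_r^+}x_n^2|\vec f|^2$, which is exactly the claim. I expect the only genuinely delicate step to be the justification that $w$ itself (not merely a cutoff of $w$) is an admissible test function all the way to $\{x_n=0\}$ — i.e.\ that the integration-by-parts identity holds with no residual boundary contribution at the degenerate face. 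This is where the interior Caccioppoli estimate is essential: it upgrades the local $H^1$ information into the global statement $\int_{B_r^+}x_n^2|\nabla w|^2<\infty$, after which the $x_n^2$ weight kills the boundary term in the trace sense described in the introduction, and the standard Lax--Milgram/density machinery in $H^1(B_r^+,x_n^2\,dx)$ finishes the argument.
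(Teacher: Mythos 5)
Your final estimate (test the equation, use ellipticity on the left, Cauchy--Schwarz with $\varepsilon=\lambda$ on the right) is exactly right and matches what the paper does, but the preliminary step you propose to justify testing against $w$ itself is both more complicated than the paper's argument and, as stated, has a gap. The paper does not pre-establish $w\in H^{1}(B_{r}^{+},x_{n}^{2}dx)$; it simply observes that since $w$ vanishes for $|x|\geq r$ no spatial cutoff $\varphi$ is needed, tests directly against $\eta_{h}^{2}w$ (a legitimate test function because it is supported in $\{x_{n}\geq ch\}$ where $w$ is only $H^{1}_{loc}$), runs the same ellipticity/Cauchy--Schwarz estimate with the $\eta_{h}$ in place, and then sends $h\to 0$ via Fatou to obtain the global weighted energy bound in a single pass. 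Your two-step plan --- first establish $\int_{B_{r}^{+}}x_{n}^{2}|\nabla w|^{2}<\infty$, then invoke density --- is redundant: the uniform-in-$h$ bound already is the conclusion, so the detour buys nothing.

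The gap in the detour is concrete. Applying Proposition \ref{caccioppoli} (or its refined Remark) on sub-balls $B_{\rho}^{+}$, $\rho<r$, only yields control of $\int x_{n}^{2}|\nabla w|^{2}$ on $B_{\rho/2}^{+}$, which even in the limit $\rho\nearrow r$ covers only $B_{r/2}^{+}$; to reach the annulus $B_{r}^{+}\setminus B_{r/2}^{+}$ you would have to translate the centers toward $\partial B_{r}$, where the admissible radius $\rho$ shrinks and the $\rho^{-2}$ factor in the refined estimate blows up. What rescues that region is precisely the vanishing of $w$ across $\partial B_{r}$, and exploiting it is nothing other than the paper's direct $\eta_{h}^{2}w$ test. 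Similarly, your density argument (``a fixed spatial cutoff that equals $1$ on the support of $w$'') presupposes that $w$ vanishes on a neighbourhood of $(\partial B_{r})^{+}$, whereas the hypothesis is only $w\big|_{|x|\geq r}=0$; the support of $w$ may touch $\partial B_{r}$. So the correct and economical route is the paper's: no spatial cutoff, only $\eta_{h}$, then Fatou.
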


\begin{proof}
The proof is similar to Proposition \ref{caccioppoli} and we adopt its cut-off function $\eta_{h}$, which satisfies $x_{n}^{2}|\nabla\eta_{h}|^{2}\leq C\chi_{\{x_{n}\leq h\}}$. We multiply by $\eta_{h}^{2}w$ on both sides of \eqref{main} and do integration by parts. Finally, we let $h\to0$ and apply the Fatou lemma.
\end{proof}

\subsection{Weak solution}
Assume that $\lambda I\leq A\leq\Lambda I$. To find a solution of \eqref{main} satisfying some boundary condition, we can use the Lax-Milgram lemma. Let the bilinear form and linear functional
\begin{equation}
B(w,\varphi):=\int_{B_{r}^{+}}x_{n}^{2}\nabla\varphi^{T}A\nabla w dx,\quad F(\varphi)=\int_{B_{r}^{+}}x_{n}^{2}\vec{f}\cdot\nabla\varphi dx
\end{equation}
be defined on $H^{1}(B_{r}^{+},x_{n}^{2}dx)$. If $w=0$ when $|x|>r$, then by Proposition \ref{regular poincare},
\begin{equation*}
    B(w,w)\geq\frac{\lambda}{5}\|w\|^{2}_{H^{1}(B_{r}^{+},x_{n}^{2}dx)},\quad B(u,v)\leq\Lambda\|u\|_{H^{1}(B_{r}^{+},x_{n}^{2}dx)}\|v\|_{H^{1}(B_{r}^{+},x_{n}^{2}dx)}.
\end{equation*}

In the special case $\vec{f}=0$, we have the following existence result:
\begin{proposition}\label{exist}
    Assume that $\lambda I\leq A\leq\Lambda I$. If $w_{0}\in H^{1}(B_{r}^{+},x_{n}^{2}dx)$, then there exists a unique weak solution $w\in H^{1}(B_{r}^{+},x_{n}^{2}dx)$ of
    \begin{equation*}
        div(x_{n}^{2}A\nabla w)=0
    \end{equation*}
    with boundary data $w_{0}$. Besides, there exists $C=C(n,\lambda,\Lambda)$ so that
    \begin{equation}
        \int_{B_{r}^{+}}x_{n}^{2}|\nabla w|^{2}dx\leq C\int_{B_{r}^{+}}x_{n}^{2}|\nabla w_{0}|^{2}dx.
    \end{equation}
\end{proposition}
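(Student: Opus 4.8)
The plan is to read the statement correctly and then apply Lax--Milgram. Because \eqref{main} degenerates on the flat face $B_r' = B_r \cap \{x_n = 0\}$, the phrase ``with boundary data $w_0$'' can only constrain $w$ on the spherical part of $\partial B_r^+$. So I would first introduce
\[
  H^1_0(B_r^+, x_n^2\,dx) := \overline{\{\varphi \in C^\infty(\overline{B_r^+}) : \operatorname{supp}\varphi \cap \partial B_r = \varnothing\}},
\]
the closure taken in the $H^1(B_r^+, x_n^2\,dx)$ norm, and declare that $w$ solves the problem with data $w_0$ when $w - w_0 \in H^1_0(B_r^+, x_n^2\,dx)$. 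Each cut-off function in the dense class has support at positive distance from $\partial B_r$, hence is compactly supported in the sense of Lemma \ref{L1poin} after extension by zero; applying Proposition \ref{regular poincare} and passing to the limit then gives $\int_{B_r^+} v^2 \le 4 \int_{B_r^+} x_n^2 |\nabla v|^2$ for every $v \in H^1_0(B_r^+, x_n^2\,dx)$.

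Next I would check that $B$ is bounded and coercive on $H^1_0(B_r^+, x_n^2\,dx)$. Boundedness, $|B(u,v)| \le \Lambda \|u\|_{H^1(B_r^+, x_n^2\,dx)}\|v\|_{H^1(B_r^+, x_n^2\,dx)}$, is immediate from $A \le \Lambda I$ and Cauchy--Schwarz; coercivity is exactly the estimate already displayed in the text, $B(v,v) \ge \lambda \int_{B_r^+} x_n^2 |\nabla v|^2 \ge \tfrac{\lambda}{5}\|v\|_{H^1(B_r^+, x_n^2\,dx)}^2$, combining $A \ge \lambda I$ with the Poincaré inequality of the previous step. Writing $w = w_0 + v$ with $v \in H^1_0(B_r^+, x_n^2\,dx)$, the weak formulation $B(w,\varphi) = 0$ for all $\varphi \in H^1_0(B_r^+, x_n^2\,dx)$ is equivalent to $B(v,\varphi) = -B(w_0,\varphi)$ for all such $\varphi$, and $\varphi \mapsto -B(w_0,\varphi)$ is a bounded linear functional on $H^1_0(B_r^+, x_n^2\,dx)$ (norm $\le \Lambda\|w_0\|_{H^1(B_r^+, x_n^2\,dx)}$, using $w_0 \in H^1(B_r^+, x_n^2\,dx)$). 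Lax--Milgram then produces a unique $v$, hence a unique $w$ with $w - w_0 \in H^1_0(B_r^+, x_n^2\,dx)$; restricting the weak identity to $\varphi \in C_c^\infty(B_r^+)$ recovers $div(x_n^2 A \nabla w) = 0$ in the interior, and the flat face carries the natural weighted Neumann condition described in the introduction.

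For the energy estimate I would simply test the equation for $v$ against $v$: $\lambda \int_{B_r^+} x_n^2 |\nabla v|^2 \le B(v,v) = -B(w_0,v) \le \Lambda \big(\int_{B_r^+} x_n^2 |\nabla w_0|^2\big)^{1/2}\big(\int_{B_r^+} x_n^2 |\nabla v|^2\big)^{1/2}$, so $\int_{B_r^+} x_n^2 |\nabla v|^2 \le (\Lambda/\lambda)^2 \int_{B_r^+} x_n^2 |\nabla w_0|^2$, and then $\int_{B_r^+} x_n^2 |\nabla w|^2 \le 2\int_{B_r^+} x_n^2 |\nabla v|^2 + 2\int_{B_r^+} x_n^2 |\nabla w_0|^2 \le C(n,\lambda,\Lambda)\int_{B_r^+} x_n^2 |\nabla w_0|^2$.

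The only point needing genuine care is the first one --- pinning down the trace space $H^1_0(B_r^+, x_n^2\,dx)$ and verifying that the Poincaré inequality of Proposition \ref{regular poincare} propagates from the dense family of spherically cut-off functions to the whole space, so that coercivity holds on all of the Hilbert space where Lax--Milgram is applied. Everything after that is the textbook Lax--Milgram argument plus one integration by parts to identify the weak solution.
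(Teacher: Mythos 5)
Your argument is correct and follows the paper's route: both you and the paper decompose $w = w_0 + v$ with the correction term vanishing near the spherical boundary, invoke the weighted Poincar\'e inequality (Proposition \ref{regular poincare}) for coercivity, apply Lax--Milgram, and read off the energy bound; your functional $\varphi \mapsto -B(w_0,\varphi)$ is precisely the paper's $F$ with $\vec{f} = -A\nabla w_0$. You are merely more explicit about defining the space $H^1_0(B_r^+, x_n^2\,dx)$ as a closure and propagating the Poincar\'e inequality to it by density, details the paper leaves implicit.
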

\begin{proof}
    It suffices to find a solution $w_{1}\in H^{1}(B_{r}^{+},x_{n}^{2}dx)$ so that $\ds w_{1}\Big|_{|x|>r}=0$ and
    \begin{equation*}
        div(x_{n}^{2}A\nabla w_{1})=-div(x_{n}^{2}A\nabla w_{0}).
    \end{equation*}
    Let $\vec{f}=-A\nabla w_{0}$, then as $w_{0}\in H^{1}(B_{r}^{+},x_{n}^{2}dx)$, the linear functional $F(\varphi)$ is bounded. By Lax-Milgram lemma, $w_{1}$ exists and
    \begin{equation*}
        \int_{B_{r}^{+}}x_{n}^{2}|\nabla w_{1}|^{2}\leq C\int_{B_{r}^{+}}x_{n}^{2}|\vec{f}|^{2}\leq C\int_{B_{r}^{+}}x_{n}^{2}|\nabla w_{0}|^{2}.
    \end{equation*}
    If $w=w_{0}+w_{1}$, then $div(x_{n}^{2}A\nabla w)=0$ with boundary data $w_{0}$.
\end{proof}
\begin{remark}
    We should notice that $w_{0}$ is the Dirichlet boundary data of $w$ only on the spherical part of the hemisphere. Even if 
$\|w_{0}\|_{L^{\infty}}<\infty$, what we know on the planar boundary $B_{r}'$ is at most $|w|<\infty$. Therefore, we may call $w_{0}$ a half-Dirichlet-half-Neumann boundary data.
\end{remark}

\section{Schauder estimate}
In this section, we will give a $C^{1,\alpha}$ interior estimate of model equation \eqref{main}. First, Proposition \ref{gammaschauder} gives a Schauder estimate at the planar boundary $B_{1/2}'$, then Theorem \ref{degenerateschauder} follows easily by interior Schauder estimate.

We first give two approximation lemmas.
\begin{lemma}\label{constant coefficient}
If $h\in H^{1}(B_{1}^{+},x_{n}^{2}dx)$ solves the equation
\begin{equation*}
    div(x_{n}^{2}\nabla h)=0,
\end{equation*}
then $h\in C^{2}(B_{1/2}^{+})$. In particular, there exists a linear polynomial
\begin{equation}
    l(x)=h(0)+x\cdot\nabla h(0)
\end{equation}
such that for all $\rho\leq1/2$,
\begin{equation}
    \|l\|_{C^{0,1}(B_{1}^{+})}^{2}+\frac{1}{\rho^{n+4}}\int_{B_{\rho}^{+}}|h-l|^{2}dx\leq C(n)\|h\|_{L^{2}(B_{1}^{+})}^{2}.
\end{equation}
\end{lemma}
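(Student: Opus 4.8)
The equation $\mathrm{div}(x_n^2\nabla h)=0$ is the extension equation associated to the fractional Laplacian with $s=2$ (outside the $A_2$ range), and the natural first move is to reflect. Specifically, I would set $H(x',x_n)=H(x',-x_n)$ by even reflection and check that the reflected function $H$ solves $\mathrm{div}(|x_n|^2\nabla H)=0$ \emph{across} $\{x_n=0\}$ in the weak sense; the odd part of the flux $x_n^2\partial_n h$ vanishes on the hyperplane because $x_n^2$ kills any boundary term (this is exactly the ``Neumann data is automatic'' remark made after Theorem \ref{degenerateschauder}, recast here as: $H^1(B_1^+,x_n^2dx)$ solutions are precisely the even-reflection-extendable ones with $\lim_{x_n\to 0}x_n\, h=0$ in trace sense, which holds for $H^1(x_n^2dx)$ functions). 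So the weighted equation on the half-ball becomes a global weighted equation on $B_1$ with weight $|x_n|^2$, which is smooth and non-degenerate away from a codimension-one set.

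Having reflected, I would exploit the product structure: for a solution of $\mathrm{div}(x_n^2\nabla h)=0$, both $\partial_i h$ for $i<n$ and $x_n\partial_n h$ are again (weighted-harmonic-type) solutions of related equations, while $h$ itself is smooth in the $x'$ directions by difference quotients. The cleanest route is probably to observe that $v:=x_n^{-1}\partial_n(x_n^2\partial_n h)/\,\cdots$ — rather, to note that writing the equation out, $x_n^2\Delta h + 2x_n\partial_n h=0$, i.e. $\Delta h + \tfrac{2}{x_n}\partial_n h=0$, which is the equation for axially symmetric harmonic functions in $n+2$ dimensions (the ``$\mathrm{div}(x_n^{N}\nabla h)=0$ is the Laplacian on $\mathbb{R}^{n-1}\times\mathbb{R}^{N+1}$ restricted to functions depending on $|y|$ for the last $N+1$ coordinates'' trick, here $N=2$, so $n+2$ dimensions). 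Under that identification $h(x',x_n)=\tilde h(x',y)$ with $\tilde h$ harmonic in $\mathbb{R}^{n+1}$ and radial in $y=(y_1,y_2,y_3)\in\mathbb{R}^3$ with $|y|=x_n$; interior elliptic estimates for the genuinely harmonic $\tilde h$ on $B_{3/4}\subset\mathbb{R}^{n+2}$ give smoothness, and in particular $h\in C^2(B_{1/2}^+)$ with $\|h\|_{C^2(B_{1/2}^+)}\le C(n)\|\tilde h\|_{L^2(B_{3/4})}\le C(n)\|h\|_{L^2(B_1^+,x_n^2dx)}\le C(n)\|h\|_{L^2(B_1^+)}$, the last step because $x_n^2\le 1$ on $B_1$. (One must be slightly careful that $\tilde h$ radial forces $\partial_{|y|}\tilde h=0$ at $|y|=0$, which is consistent with, and in fact equivalent to, the Neumann condition; that is why only the ``half-Dirichlet'' data survives.)

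With $h\in C^2(B_{1/2}^+)$ in hand, the polynomial $l(x)=h(0)+x\cdot\nabla h(0)$ is just the first-order Taylor polynomial, so Taylor's theorem with remainder gives $|h(x)-l(x)|\le \tfrac12\|D^2h\|_{L^\infty(B_{1/2}^+)}|x|^2$ on $B_{1/2}^+$, hence
\begin{equation*}
\frac{1}{\rho^{n+4}}\int_{B_\rho^+}|h-l|^2\,dx\le \frac{1}{\rho^{n+4}}\cdot C(n)\rho^{n+4}\,\|D^2h\|_{L^\infty(B_{1/2}^+)}^2\le C(n)\|h\|_{L^2(B_1^+)}^2
\end{equation*}
for all $\rho\le 1/2$, and similarly $\|l\|_{C^{0,1}(B_1^+)}^2\le C(n)(|h(0)|^2+|\nabla h(0)|^2)\le C(n)\|h\|_{C^2(B_{1/2}^+)}^2\le C(n)\|h\|_{L^2(B_1^+)}^2$. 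Adding the two bounds finishes the lemma.

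\textbf{Main obstacle.} The only genuinely delicate point is justifying the reflection/dimension-lifting rigorously for merely $H^1(B_1^+,x_n^2dx)$ weak solutions: one needs to know that such a solution has a trace with $x_n h\to 0$ and that the associated conormal flux across $\{x_n=0\}$ vanishes, so that the even extension is a weak solution on the full ball (equivalently, the lifted function is weakly harmonic across the axis $\{y=0\}\subset\mathbb{R}^{n+2}$, using that $\{y=0\}$ has zero capacity there). This is where the Caccioppoli inequality (Proposition \ref{caccioppoli}) and the weighted Poincaré inequality (Proposition \ref{regular poincare}) do the real work — they control $\int_{B_r^+}x_n^2|\nabla h|^2$, which is exactly the energy needed to run the capacity/removable-singularity argument and to legitimize integrations by parts in the lifted picture. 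Everything after that is classical interior regularity and Taylor expansion.
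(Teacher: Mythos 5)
Your proof is correct but takes a genuinely different route from the paper's. The paper's argument is more direct: setting $H = x_n h$, one computes
\[
\Delta H = x_n\Delta h + 2\partial_n h = \frac{1}{x_n}\,\mathrm{div}(x_n^2\nabla h) = 0,
\]
so $H$ is harmonic in $B_1^+$ with zero Dirichlet trace on $B_1'$ (established through the smooth $H^1(B_1^+,x_n^2 dx)$-approximants $H_k = x_n h_k$, each of which vanishes on $\{x_n = 0\}$), and the $C^3$ boundary Schauder estimate for $H$ yields $h = \int_0^1 \partial_n H(x', tx_n)\,dt \in C^2(\overline{B_{1/2}^+})$. Your dimension-lifting argument --- defining $\tilde h(x',y) = h(x',|y|)$ for $y \in \mathbb{R}^3$, noting that $\tilde h$ is a radial $H^1$ function on a ball in $\mathbb{R}^{n+2}$ which is harmonic away from the axis $\{y=0\}$, and invoking removability of that axis (codimension $3\geq 2$, hence zero $2$-capacity) --- is also sound, and has the merit of converting a boundary estimate into a purely interior one. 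Two small simplifications to your write-up: (i) the preliminary even-reflection paragraph is unnecessary, since $\tilde h$ is already defined on the whole ball of $\mathbb{R}^{n+2}$ because $|y|\geq 0$ automatically covers $x_n\geq 0$, so the lift subsumes the reflection; (ii) you do not need any Whitney-type theorem to conclude smoothness of $h$ up to $\{x_n=0\}$ --- just observe $h(x',x_n)=\tilde h(x',(x_n,0,0))$ is the restriction of the smooth $\tilde h$ to a half-subspace. The paper's substitution $H=x_nh$ is shorter and stays entirely within classical flat-boundary Schauder theory (equivalently, odd reflection of $H$ plus interior estimates); your lift is the natural move if one wants to connect to the $A_2$-weight / Caffarelli--Silvestre toolbox, and it makes the automatic Neumann compatibility $\partial_n h(x',0)=0$ geometrically transparent --- a fact the paper records only in the remark that $l(x)=l(x')$ is vertically constant.
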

\begin{proof}
Let $h_{k}\in H^{1}(B_{1}^{+},x_{n}^{2}dx)\cap C^{\infty}(\overline{B_{1}^{+}})$ be a $H^{1}(B_{1}^{+},x_{n}^{2}dx)$ approximation of $h$, then $H_{k}=x_{n}h_{k}$ is a smooth $H^{1}(B_{1}^{+},dx)$ approximation of $H=x_{n}h$ and each $H_{k}$ vanishes at $\{x_{n}=0\}$. Therefore, $H\Big|_{B_{1}'}=0$ in trace sense. Notice that $\Delta H=0$, and the desired inequality follows from the $C^{3}$ boundary estimate of $H$.
\end{proof}
\begin{remark}
    One can also show that $l(x)=l(x')$ is vertically constant. In fact, for linear functions, $l(x)=l(x')$ is equivalent to $div(x_{n}^{2}\nabla l)=0$.
\end{remark}
\begin{remark}
    You can refer to \cite{DP20} for a more general proof of Lemma \ref{constant coefficient}. The method is to establish a De Giorgi - Nash - Moser estimate in the weighted setting.
\end{remark}

Later, we will use the following rescaled version of the previous lemma.
\begin{corollary}\label{constant coefficient corollary}
    Let $r>0$. If $h\in H^{1}(B_{r}^{+},x_{n}^{2}dx)$ solves the equation
\begin{equation*}
    div(x_{n}^{2}\nabla h)=0,
\end{equation*}
then $h\in C^{2}(B_{r/2}^{+})$. In particular, there exists a linear polynomial
\begin{equation}
    l(x)=h(0)+x\cdot\nabla h(0)
\end{equation}
such that for all $\rho\leq1/2$,
\begin{equation}
    |l(0)|^{2}+r^{2}|\nabla l|^{2}+\frac{1}{\rho^{n+4}r^{n}}\int_{B_{\rho r}^{+}}|h-l|^{2}dx\leq\frac{C(n)}{r^{n}}\|h\|_{L^{2}(B_{r}^{+})}^{2}.
\end{equation}
\end{corollary}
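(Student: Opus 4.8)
The plan is to deduce Corollary \ref{constant coefficient corollary} from Lemma \ref{constant coefficient} by a simple scaling argument, with no new PDE input needed. First I would introduce the rescaled function
\[
    \tilde{h}(y) := h(ry), \qquad y\in B_{1}^{+},
\]
and observe that since $h$ solves $\mathrm{div}(x_{n}^{2}\nabla h)=0$ in $B_{r}^{+}$, the chain rule gives $\nabla\tilde h(y)=r\,(\nabla h)(ry)$, $\nabla^{2}\tilde h(y)=r^{2}(\nabla^{2}h)(ry)$, and $y_{n}=x_{n}/r$, so that $\mathrm{div}_{y}(y_{n}^{2}\nabla_{y}\tilde h)$ is, up to a positive power of $r$, the pullback of $\mathrm{div}_{x}(x_{n}^{2}\nabla_{x}h)$; hence $\tilde h$ solves the same constant-coefficient equation on $B_{1}^{+}$ and $\tilde h\in H^{1}(B_{1}^{+},x_{n}^{2}dx)$. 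Applying Lemma \ref{constant coefficient} to $\tilde h$ produces a linear polynomial $\tilde l(y)=\tilde h(0)+y\cdot\nabla\tilde h(0)$ with
\[
    \|\tilde l\|_{C^{0,1}(B_{1}^{+})}^{2}+\frac{1}{\rho^{n+4}}\int_{B_{\rho}^{+}}|\tilde h-\tilde l|^{2}\,dy\leq C(n)\|\tilde h\|_{L^{2}(B_{1}^{+})}^{2}
\]
for every $\rho\le 1/2$.

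Next I would unwind the scaling. Setting $l(x):=\tilde l(x/r)=\tilde h(0)+\frac{x}{r}\cdot\nabla\tilde h(0)=h(0)+x\cdot\nabla h(0)$ identifies $l$ as exactly the polynomial claimed in the statement, with $l(0)=h(0)=\tilde h(0)$ and $\nabla l=\frac{1}{r}\nabla\tilde h(0)=(\nabla h)(0)$, so $|\nabla\tilde l|=|\nabla\tilde h(0)|=r|\nabla l|$. The three terms then transform by the standard change of variables $x=ry$, $dx=r^{n}dy$: the $L^{2}$ norm on the right gives $\|\tilde h\|_{L^{2}(B_{1}^{+})}^{2}=r^{-n}\|h\|_{L^{2}(B_{r}^{+})}^{2}$; the Campanato term gives $\int_{B_{\rho}^{+}}|\tilde h-\tilde l|^{2}dy=r^{-n}\int_{B_{\rho r}^{+}}|h-l|^{2}dx$; and $\|\tilde l\|_{C^{0,1}(B_{1}^{+})}^{2}$ controls $|\tilde l(0)|^{2}+|\nabla\tilde l|^{2}=|l(0)|^{2}+r^{2}|\nabla l|^{2}$. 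Collecting these and multiplying through by the appropriate power of $r$ yields precisely the asserted inequality
\[
    |l(0)|^{2}+r^{2}|\nabla l|^{2}+\frac{1}{\rho^{n+4}r^{n}}\int_{B_{\rho r}^{+}}|h-l|^{2}\,dx\leq\frac{C(n)}{r^{n}}\|h\|_{L^{2}(B_{r}^{+})}^{2},
\]
and the $C^{2}$ regularity of $h$ on $B_{r/2}^{+}$ follows from that of $\tilde h$ on $B_{1/2}^{+}$ after undoing the dilation.

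There is essentially no obstacle here: the only point requiring a moment's care is the bookkeeping of the powers of $r$ — confirming that the equation is genuinely scale-invariant under $(x,h)\mapsto(x/r,h(rx))$ (no rescaling of $h$ itself is needed because the equation is linear and homogeneous of degree zero in $h$), and that the exponent $n+4$ in the Campanato term is consistent on both sides once the volume factor $r^{n}$ and the radius factor in $\rho r$ are accounted for. Since Lemma \ref{constant coefficient} already packages exactly the estimate we need on the unit scale, the corollary is a formal consequence.
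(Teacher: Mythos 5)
Your proof is correct and is exactly what the paper intends: the paper states Corollary \ref{constant coefficient corollary} without proof, presenting it as ``the rescaled version'' of Lemma \ref{constant coefficient}, and your dilation $\tilde h(y)=h(ry)$ with the bookkeeping of $r$-powers is precisely that rescaling.
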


\begin{lemma}\label{harmonic replacement}
Assume that $a^{ij}(0)=\delta^{ij}$ and $[a^{ij}]_{C^{\alpha}(B_{r}^{+})}\leq\varepsilon_{0}$. Let $w\in H^{1}_{loc}(B_{r}^{+})$ be a solution of \eqref{main}, then there is a weak solution
\begin{equation}
    div\Big(x_{n}^{2}\nabla h\Big)=0,\quad h\Big|_{(\partial B_{r/2})^{+}}=w
\end{equation}
in the space $H^{1}(B_{r/2}^{+},x_{n}^{2}dx)$, and there exists $C=C(n)$ so that
\begin{align}
    \int_{B_{r/2}^{+}}h^{2}dx\leq&C\Big\{\int_{B_{r}^{+}}x_{n}^{2}|\vec{f}|^{2}dx+\int_{B_{r}^{+}}w^{2}dx\Big\},\label{h1}\\
    \int_{B_{r/2}^{+}}|h-w|^{2}dx\leq&C\Big\{\int_{B_{r}^{+}}x_{n}^{2}|\vec{f}|^{2}dx+\varepsilon_{0}^{2}r^{2\alpha}\int_{B_{r}^{+}}w^{2}dx\Big\}.\label{h2}
\end{align}
\end{lemma}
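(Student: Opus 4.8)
The plan is to construct $h$ as the unique weak solution guaranteed by Proposition \ref{exist} (after rescaling to $B_{r/2}^+$), with spherical boundary data $w\big|_{(\partial B_{r/2})^+}$; this makes sense because $w\in H^1_{loc}(B_r^+)$ restricts to an element of $H^1(B_{r/2}^+, x_n^2\,dx)$, and Proposition \ref{exist} (in rescaled form) yields both existence/uniqueness and the energy bound
\begin{equation*}
\int_{B_{r/2}^+} x_n^2|\nabla h|^2 \leq C\int_{B_{r/2}^+} x_n^2|\nabla w|^2.
\end{equation*}
To turn this into \eqref{h1}, I would combine the interior Caccioppoli inequality (Proposition \ref{caccioppoli}) for $w$ on the ball $B_r^+$, which controls $\int_{B_{r/2}^+} x_n^2|\nabla w|^2$ by $\int_{B_r^+} x_n^2|\vec f|^2 + \int_{B_r^+} w^2$, with a weighted Poincaré-type inequality on $B_{r/2}^+$ to pass from the gradient norm of $h$ back to the $L^2$ norm of $h$ itself. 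Since $h-w$ may be extended by zero outside $B_{r/2}^+$ only after subtracting $w$, the clean route is: write $h = w + (h-w)$ where $h-w \in H^1_0$-type space vanishing on $(\partial B_{r/2})^+$, apply Proposition \ref{regular poincare} (rescaled) to $h-w$ to get $\int |h-w|^2 \leq Cr^2\int x_n^2|\nabla(h-w)|^2$, and then $\int h^2 \leq 2\int w^2 + 2\int|h-w|^2$; the gradient of $h-w$ is controlled by the two energies just mentioned.

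The substantive estimate is \eqref{h2}. Here the difference $v := h - w$ satisfies, weakly, $\operatorname{div}(x_n^2\nabla v) = \operatorname{div}(x_n^2\nabla h) - \operatorname{div}(x_n^2\nabla w)$; using the equations for $h$ and $w$ and the normalization $A(0)=I$, one gets
\begin{equation*}
\operatorname{div}(x_n^2\nabla v) = -\operatorname{div}\big(x_n^2(A-I)\nabla w\big) + \operatorname{div}(x_n^2\vec f)
\end{equation*}
(this is the cleanest grouping: the $\nabla h$ equation cancels against the Laplacian-type part of the $\nabla w$ equation, leaving the oscillation $A-I$ of the coefficients times $\nabla w$). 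Since $v$ vanishes on the spherical boundary, I can apply the Dirichlet-type estimate of Proposition \ref{dirichlet} (rescaled to $B_{r/2}^+$) with effective right-hand side $\vec g = -(A-I)\nabla w + \vec f$, giving
\begin{equation*}
\int_{B_{r/2}^+} x_n^2|\nabla v|^2 \leq C\int_{B_{r/2}^+} x_n^2|\vec g|^2 \leq C\int_{B_r^+} x_n^2|\vec f|^2 + C\|A-I\|_{L^\infty(B_{r/2}^+)}^2 \int_{B_{r/2}^+} x_n^2|\nabla w|^2.
\end{equation*}
The coefficient oscillation is bounded by $\|A-I\|_{L^\infty(B_{r/2}^+)} \leq [a^{ij}]_{C^\alpha} r^\alpha \leq \varepsilon_0 r^\alpha$ using $A(0)=I$, and $\int_{B_{r/2}^+} x_n^2|\nabla w|^2$ is again controlled by Caccioppoli (Proposition \ref{caccioppoli}) by $\int_{B_r^+}x_n^2|\vec f|^2 + \int_{B_r^+} w^2$. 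Finally the weighted Poincaré inequality for $v$ (vanishing on the spherical boundary, Proposition \ref{regular poincare} rescaled) converts the gradient bound into $\int_{B_{r/2}^+}|h-w|^2 = \int |v|^2 \leq Cr^2\int x_n^2|\nabla v|^2$, and collecting terms yields \eqref{h2}, where the $\varepsilon_0^2 r^{2\alpha}$ factor multiplies exactly the $\int_{B_r^+} w^2$ term as claimed (the $\int x_n^2|\vec f|^2$ term appears without the small factor, consistent with the statement).

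The main obstacle I anticipate is the careful bookkeeping at the planar boundary $B_{r/2}'$: all the integrations by parts producing the weak formulations of the equations for $h$, $w$, and $v$ must be justified against test functions in the weighted space, and one must check that the boundary term at $\{x_n=0\}$ genuinely vanishes — this is where the factor $x_n^2$ and the approximation by smooth functions vanishing on $\{x_n=0\}$ (as in Lemma \ref{constant coefficient}) is essential, and where the "half-Dirichlet-half-Neumann" nature of the problem means $v$ really does only vanish on the spherical part while on $B_{r/2}'$ we only know $v\in L^2$. A secondary technical point is the $h\to 0$ limiting argument inherited from Propositions \ref{caccioppoli} and \ref{dirichlet} (the cutoffs $\eta_h$), which must be threaded through consistently so that all gradient integrals are finite before Fatou is applied. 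Once these are in place, the rest is the routine chain of Caccioppoli $\Rightarrow$ energy bound $\Rightarrow$ Poincaré sketched above.
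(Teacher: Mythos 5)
Your proposal is correct and follows essentially the same route as the paper: construct $h$ via Proposition \ref{exist}, bound the energy of $h$ by the energy of $w$, apply Caccioppoli for $w$, and treat $v=h-w$ as a zero-boundary-data solution of a degenerate equation to which Propositions \ref{dirichlet} and \ref{regular poincare} apply. The only genuine variant is in how you write the equation for $v$: you freeze the coefficient at the identity and put the perturbation on the right as $(A-I)\nabla w$, whereas the paper keeps the coefficient $A$ on the left and puts $\vec f'=(A-I)\nabla h$ on the right. Both work because both $\int x_n^2|\nabla w|^2$ (via Caccioppoli) and $\int x_n^2|\nabla h|^2$ (via the energy estimate of Proposition \ref{exist}) are controlled by $\int x_n^2|\vec f|^2+\int w^2$, and the smallness $\|A-I\|_{L^\infty(B_{r/2}^+)}\leq\varepsilon_0 r^\alpha$ is used identically.

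One small inaccuracy: you state that the rescaled weighted Poincar\'e gives $\int|h-w|^2\leq Cr^2\int x_n^2|\nabla(h-w)|^2$. The weight $x_n^2$ already carries the length scale, so Proposition \ref{regular poincare} is \emph{scale-invariant}: the correct inequality is $\int|h-w|^2\leq 4\int x_n^2|\nabla(h-w)|^2$ with no factor of $r^2$ — this is precisely why the final bound \eqref{h2} has no stray power of $r$ in front of $\int x_n^2|\vec f|^2$. The extra $r^2$ is harmless when $r\leq1$ (as in the Campanato iteration), but carrying it would make the lemma as stated fail for $r>1$, and it signals a scaling miscount that is worth correcting.
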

\begin{proof}
We first use Proposition \ref{caccioppoli} and \ref{exist} to get the existence of $h$ with
\begin{equation*}
    \int_{B_{r/2}^{+}}x_{n}^{2}|\nabla h|^{2}\leq C\int_{B_{r/2}^{+}}x_{n}^{2}|\nabla w|^{2}\leq C\Big\{\int_{B_{r}^{+}}x_{n}^{2}|\vec{f}|^{2}+\int_{B_{r}^{+}}w^{2}\Big\}.
\end{equation*}
If we denote $\ds\vec{f}'=\Big(a^{ij}(x)-\delta^{ij}\Big)\partial_{j}h\cdot e_{i}$, then
\begin{equation}\label{fprime}
    \int_{B_{r/2}^{+}}x_{n}^{2}|\vec{f}'|^{2}\leq C\varepsilon_{0}^{2}r^{2\alpha}\int_{B_{r/2}^{+}}x_{n}^{2}|\nabla h|^{2}\leq C\varepsilon_{0}^{2}r^{2\alpha}\Big\{\int_{B_{r}^{+}}x_{n}^{2}|\vec{f}|^{2}+\int_{B_{r}^{+}}w^{2}\Big\}
\end{equation}
and $(h-w)$ satisfies $div\Big(x_{n}^{2}A(x)\cdot\nabla (h-w)\Big)=div\Big(x_{n}^{2}(\vec{f}'-\vec{f})\Big)$ with zero boundary data at $(\partial B_{r/2})^{+}$. Proposition \ref{regular poincare} and \ref{dirichlet}, applied to $(h-w)$, give
\begin{equation*}
    \int_{B_{r/2}^{+}}|h-w|^{2}\leq C\int_{B_{r/2}^{+}}x_{n}^{2}|\nabla(h-w)|^{2}\leq C\int_{B_{r/2}^{+}}x_{n}^{2}\big(|\vec{f}|^{2}+|\vec{f}'|^{2}\big).
\end{equation*}
By using \eqref{fprime} we prove \eqref{h2}. Now \eqref{h1} follows from triangle inequality.
\end{proof}

Now let's give a Schauder estimate of $w$ at $\mathbb{R}^{n-1}$.

\begin{proposition}\label{gammaschauder}
Assume that $a^{ij}(0)=\delta^{ij}$, $[a^{ij}]_{C^{\alpha}(B_{1}^{+})}\leq\varepsilon_{0}$ and $w\in L^{2}(B_{1}^{+})\cap H^{1}_{loc}(B_{1}^{+})$ is a solution of \eqref{main}. Given that $\varepsilon_{0}$ is small, there exist a linear polynomial $P$ and $C=C(n,\alpha)$ such that
\begin{equation}
    \|P\|_{C^{0,1}(B_{1}^{+})}^{2}+\frac{1}{r^{n+2+2\alpha}}\int_{B_{r}^{+}}|w-P|^{2}dx\leq C(\|\vec{f}\|_{C^{\alpha}(B_{1})}^{2}+\|w\|_{L^{2}(B_{1})}^{2}).
\end{equation}
\end{proposition}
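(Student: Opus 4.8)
The plan is to prove Proposition \ref{gammaschauder} by a Campanato-type iteration at the origin, comparing $w$ on dyadic half-balls $B_{\rho^k}^+$ with its harmonic replacement and accumulating the resulting decay. The building blocks are already in place: Lemma \ref{harmonic replacement} produces, for a solution $h$ of the constant-coefficient equation $\mathrm{div}(x_n^2\nabla h)=0$ agreeing with $w$ on the spherical boundary, quantitative $L^2$ bounds on $h$ and on $h-w$; and Corollary \ref{constant coefficient corollary} gives, for such $h$, a linear polynomial $l$ whose $C^{0,1}$ norm is controlled and for which $\rho^{-(n+4)}r^{-n}\int_{B_{\rho r}^+}|h-l|^2\le C(n)r^{-n}\|h\|_{L^2(B_r^+)}^2$. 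Combining a $\rho^{n+4}$ decay from the polynomial approximation of $h$ with the error $\int_{B_{r/2}^+}|h-w|^2\le C(\int_{B_r^+}x_n^2|\vec f|^2+\varepsilon_0^2 r^{2\alpha}\int_{B_r^+}w^2)$ yields, after choosing $\rho$ small (to beat $\rho^{n+4}$ against the target exponent $\rho^{n+2+2\alpha}$) and then $\varepsilon_0$ small, a one-step improvement of the form
\begin{equation*}
    \frac{1}{(\rho r)^{n+2+2\alpha}}\int_{B_{\rho r}^+}|w-l_{\rho r}|^2\,dx\le \frac{1}{2}\cdot\frac{1}{r^{n+2+2\alpha}}\int_{B_r^+}|w-l_r|^2\,dx+C\|\vec f\|_{C^\alpha(B_1)}^2.
\end{equation*}

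Concretely, I would set up the iteration as follows. Fix $\rho\in(0,1/2)$ to be chosen, and define for each $k\ge0$ the quantity $M_k:=\rho^{-k(n+2+2\alpha)}\int_{B_{\rho^k}^+}|w-P_k|^2\,dx$, where $P_0=0$ and $P_{k+1}$ is obtained from $P_k$ by adding the affine correction extracted via Corollary \ref{constant coefficient corollary} applied to the harmonic replacement $h_k$ of $w-P_k$ (note $w-P_k$ still solves \eqref{main} with a modified but still $C^\alpha$ right-hand side, since $P_k$ is affine and $\mathrm{div}(x_n^2 A\nabla P_k)=\mathrm{div}(x_n^2 (A-I)\nabla P_k)$ plus the vertically-constant part contributes nothing). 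One rescales $B_{\rho^k}^+$ to $B_1^+$ so that the hypotheses $[a^{ij}]_{C^\alpha}\le\varepsilon_0$ and $a^{ij}(0)=\delta^{ij}$ are preserved (the $C^\alpha$ seminorm only improves under shrinking), applies Lemma \ref{harmonic replacement} and Corollary \ref{constant coefficient corollary}, and reads off $M_{k+1}\le \tfrac12 M_k + C(\|\vec f\|_{C^\alpha}^2 + \text{lower order})$. Summing the geometric series gives $\sup_k M_k\le C(\|\vec f\|_{C^\alpha(B_1)}^2+\|w\|_{L^2(B_1)}^2)$ and shows $|\nabla P_{k+1}-\nabla P_k|$, $|P_{k+1}(0)-P_k(0)|$ are summable at a geometric rate, so $P_k\to P$ with the claimed $C^{0,1}$ bound; a standard interpolation between dyadic scales promotes the discrete decay $M_k$ to the continuous statement for all $r\le 1$.

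The main obstacle I anticipate is bookkeeping the \emph{scaling} correctly for this nonstandard weight. Under $x\mapsto x/r$, the equation $\mathrm{div}(x_n^2 A\nabla w)=\mathrm{div}(x_n^2\vec f)$ is essentially scale-invariant (the weight $x_n^2$ and the divergence structure conspire so that $w_r(x)=w(rx)$ solves the same type of equation), but the natural quantity that transforms cleanly is $r^{-n}\int_{B_r^+}|\cdot|^2$ rather than $\int_{B_r^+}|\cdot|^2$, and one must check that the $x_n^2|\vec f|^2$ term picks up the right power of $r$ — here $\int_{B_r^+}x_n^2|\vec f|^2\,dx\lesssim r^{n+2}\|\vec f\|_{L^\infty}^2$, which is exactly what is needed to close against the target exponent $n+2+2\alpha$ once one also uses the $C^\alpha$ (not just $L^\infty$) control of $\vec f$ to gain the extra $r^{2\alpha}$. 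A secondary subtlety is that the affine comparison polynomial from Corollary \ref{constant coefficient corollary} is vertically constant (as the remark notes), so the accumulated $P$ is automatically of the form $P(x)=P(x')$; one should verify this is consistent with subtracting $P_k$ at each stage and that no $x_n$-dependence is generated by the right-hand side terms. Apart from these scaling considerations, the argument is the classical perturbative Campanato scheme and should present no further difficulty.
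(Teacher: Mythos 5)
Your scheme is essentially the paper's: a Campanato iteration on shrinking half-balls, comparing $w$ (after subtracting an accumulating affine polynomial) with its constant-coefficient replacement via Lemma \ref{harmonic replacement} and extracting the affine correction via Corollary \ref{constant coefficient corollary}. However, there is a genuine gap in how you close the iteration against the target exponent $n+2+2\alpha$. You write that ``one also uses the $C^\alpha$ control of $\vec f$ to gain the extra $r^{2\alpha}$,'' but that gain only holds if $\vec f(0)=0$: in general $\int_{B_r^+}x_n^2|\vec f|^2\lesssim r^{n+2}\bigl(|\vec f(0)|^2+r^{2\alpha}[\vec f]_{C^\alpha}^2\bigr)$, and the $r^{n+2}|\vec f(0)|^2$ term destroys the decay. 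Worse, the normalization is not self-correcting: each step modifies the effective right-hand side by $(\delta^{ij}-a^{ij})\partial_j l_k\,e_i$, which vanishes at the origin because $a^{ij}(0)=\delta^{ij}$, so the value at $0$ never changes throughout the iteration. The paper handles this by an explicit preliminary reduction: if $\vec f(0)\neq0$ one first subtracts from $w$ a linear function $l$ solving $\mathrm{div}(x_n^2\nabla l)=\mathrm{div}(x_n^2\vec f(0))$ (which forces $\partial_n l=f_n(0)$, so $l$ is \emph{not} vertically constant). This also means your claim that the accumulated $P$ is automatically of the form $P(x')$ is wrong in general; that only holds after this normalization.

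A secondary issue: you state the one-step improvement as $M_{k+1}\le\tfrac12 M_k+C\|\vec f\|_{C^\alpha(B_1)}^2$, treating the right-hand side as a fixed constant, but the effective $\vec f_k$ changes each step and its seminorm $\chi_k=[\vec f_k]_{C^\alpha(B_{S^k}^+)}$ grows by $\varepsilon_0|\nabla l_k|$. The paper tracks the coupled system $(\sigma_k,\chi_k)$ explicitly (the third inequality of \eqref{iteration inequality}, $|l_k(0)|+|\nabla l_k(0)|\le CS^{k\alpha}(\chi_k+\sigma_k)$, is what keeps $\chi_k$ bounded), and the choice of shrinking rate $S$ is tied to $\varepsilon_0$ via $\varepsilon_0^{2/(n+4)}\le S$. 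Your ``lower order'' catch-all does not by itself justify why the accumulated growth in $\chi_k$ is summable; you would need to run the same coupled bootstrap. Both gaps are repairable within your framework, but as written the argument does not close.
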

\begin{proof}
Let's first deal with the case $\vec{f}(0)=0$. Let's inductively define
\begin{equation}
    w_{0}=w,\quad w_{k+1}=w_{k}-l_{k}(k\geq0),\quad P_{k}=w-w_{k}=\sum_{i=0}^{k}l_{i},
\end{equation}
where the linear polynomials $l_{k}(x)=l_{k}(x')$ will be chosen in \eqref{latereplace}. We also define
\begin{equation}
\vec{f}_{0}=\vec{f},\quad\vec{f}_{k+1}=\vec{f}_{k}+\Big(\delta^{ij}-a^{ij}(x)\Big)\Big(\partial_{j}l_{k}\Big)e_{i},(k\geq0).
\end{equation}
By induction, we know $\vec{f}_{k}(0)=0$ and $div(x_{n}^{2}A\cdot\nabla w_{k})=div(x_{n}^{2}\vec{f}_{k})$ for all $k\geq0$. Let $\ds\varepsilon_{0}^{\frac{2}{n+4}}\leq S\leq\frac{1}{4}$ be a shrinking rate, and we define two quantities
\begin{equation}
    \sigma_{k}^{2}:=\frac{1}{S^{k(n+2+2\alpha)}}\int_{B_{S^{k}}^{+}}w_{k}^{2}dx,\quad\chi_{k}:=[\vec{f}_{k}]_{C^{\alpha}(B_{S^{k}}^{+})}.
\end{equation}
Here, $\sigma_{k}$ measures how $w$ and $P_{k}$ differ in a $C^{1,\alpha}$ sense. Let $h_{k}\in H^{1}(B_{S^{k}/2}^{+},x_{n}^{2}dx)$ be a replacement of $w_{k}$ in $B_{S^{k}/2}^{+}$, and $l_{k}$ be the linearization of $h_{k}$, i.e.
\begin{equation}\label{latereplace}
    \left\{
    \begin{aligned}
    &div\Big(x_{n}^{2}\nabla h_{k}\Big)=0\\
    &h\Big|_{(\partial B_{S^{k}/2})^{+}}=w_{k}
    \end{aligned}
    \right.,\quad l_{k}=h_{k}(0)+x\cdot\nabla h_{k}(0).
\end{equation}
By applying Lemma \ref{harmonic replacement} and Corollary \ref{constant coefficient corollary} to $w_{k+1}=(w_{k}-h_{k})+(h_{k}-l_{k})$, we have
\begin{equation}\label{iteration inequality}
\left\{
    \begin{aligned}
    &\chi_{k+1}\leq C(n)(\chi_{k}+\varepsilon_{0}S^{k\alpha}\sigma_{k}),\\
    &\sigma_{k+1}^{2}\leq C(n)S^{2-2\alpha}\sigma_{k}^{2}+\frac{C(n)}{S^{n+2+2\alpha}}\chi_{k}^{2},\\
    &|l_{k}(0)|+|\nabla l_{k}(0)|\leq C(n)S^{k\alpha}(\chi_{k}+\sigma_{k}).
    \end{aligned}
    \right.
\end{equation}
Clearly, when $\varepsilon_{0}$ is very small, then there is room to choose a $S=S(n,\alpha)$ satisfying
\begin{equation*}
    \varepsilon_{0}^{\frac{2}{n+4}}\leq S\leq\frac{1}{4},\quad C(n)S^{2-2\alpha}\leq\frac{1}{2}.
\end{equation*}
With this the iteration inequality \eqref{iteration inequality} implies
\begin{equation*}
    \sigma_{k},\chi_{k}\leq C(n,\alpha)(\chi_{0}+\sigma_{0})=C(\|\vec{f}\|_{C^{\alpha}(B_{1})}+\|w\|_{L^{2}(B_{1})}).
\end{equation*}
Besides, it follows that the sequence of linear polynomials $P_{k}$ converges to $P$ with
\begin{equation*}
    \|P\|_{C^{0,1}(B_{1}^{+})}\leq C(\|\vec{f}\|_{C^{\alpha}(B_{1})}+\|w\|_{L^{2}(B_{1})}).
\end{equation*}
$P$ is a $C^{1,\alpha}$ approximation of $w$ because $\sigma_{k}$'s are bounded.

Finally, if $\vec{f}(0)\neq0$, then we can subtract a linear function from $w$ so that the remainder $w'$ satisfies $div(x_{n}^{2}A\cdot\nabla w')=div(x_{n}^{2}\vec{f}')$ with $\vec{f}'(0)=0$. Now we are reduced to the first case.
\end{proof}
Now, Theorem \ref{degenerateschauder} is almost obvious by an interior estimate.

{\it Proof of Theorem \ref{degenerateschauder}}.
After rescaling, we can assume that $a^{ij}(0)=\delta^{ij}$ $[a^{ij}]_{C^{\alpha}}\leq\varepsilon_{0}$. Let $P(x)$ be the linear approximation of $w$, then it suffices to prove the regularity of $(w-P)$, which satisfies
\begin{equation}\label{error equation}
    div\Big(A\cdot\nabla(w-P)\Big)=div(x_{n}^{2}\vec{f}_{\infty})
\end{equation}
with $\ds\vec{f}_{\infty}=\lim_{k\rightarrow\infty}\vec{f}_{k}$. It's not hard to check that
\begin{equation*}
    \vec{f}_{\infty}(0)=0,\quad[\vec{f}_{\infty}]_{C^{\alpha}(B_{1}^{+})}\leq C(\|\vec{f}\|_{C^{\alpha}(B_{1}^{+})}+\|w\|_{L^{2}(B_{1}^{+})}).
\end{equation*}
Besides, Proposition \ref{gammaschauder} gives a $L^{2}$ bound of the error $(w-P)$:
\begin{equation*}
    \frac{1}{r^{n+2+2\alpha}}\int_{B_{r}^{+}}|w-P|^{2}\leq C(\|\vec{f}\|_{C^{\alpha}(B_{1}^{+})}^{2}+\|w\|_{L^{2}(B_{1}^{+})}^{2}).
\end{equation*}
This holds not only when $B_{r}^{+}$ is centered at $0$, but its center can also move on $\mathbb{R}^{n-1}$. Therefore, we can apply the uniformly-elliptic Schauder estimate to \eqref{error equation} in small interior balls to get $\ds\|\nabla(w-P)\|_{C^{\alpha}(B_{1/2})}\leq C(\|\vec{f}\|_{C^{\alpha}(B_{1}^{+})}+\|w\|_{L^{2}(B_{1}^{+})})$.
\qed

\begin{remark}
    The right hand side of \eqref{main} can also have a non-divergence term, like
\begin{equation}\label{degenerate}
    div(x_{n}^{2}A\cdot\nabla w)=div(x_{n}^{2}\vec{f})+x_{n}g.
\end{equation}
If $g\in C^{\alpha}$, then $x_{n}g$ is absorbed by $div(x_{n}^{2}\vec{f})$, so Theorem \ref{degenerateschauder} immediately implies
\begin{equation}
    \|w\|_{C^{1,\alpha}(B_{1/2}^{+})}\leq C(\|\vec{f}\|_{C^{\alpha}(B_{1}^{+})}+\|g\|_{C^{\alpha}(B_{1}^{+})}+\|w\|_{L^{2}(B_{1}^{+})}).
\end{equation}
\end{remark}
\begin{remark}
    In Theorem \ref{degenerateschauder}, the assumption that $A$ is symmetric can be removed. In fact, Lemma \ref{constant coefficient} also works for non-symmetric matrix.
\end{remark}

\section{Higher order boundary Harnack}\label{relation}

\subsection{Straight boundary case}
We first prove Theorem \ref{HOBH} when $\Gamma=B_{1}'$ is straight. Before that, let's look at the following ratio lemma.
\begin{lemma}\label{ratio}
    If $u_{1},u_{2}$ satisfy $div(A\cdot\nabla u_{i})=div(\vec{f}_{i})$, then the ratio $\ds w=\frac{u_{1}}{u_{2}}$ satisfies
    \begin{equation}
        div(u_{2}^{2}A\nabla w)=div(u_{2}\vec{f}_{1}-u_{1}\vec{f_{2}})+(\vec{f}_{2}\cdot\nabla u_{1}-\vec{f}_{1}\cdot\nabla u_{2})
    \end{equation}
    as long as $A$ is symmetric.
\end{lemma}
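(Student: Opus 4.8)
The plan is to verify the identity by a direct computation, starting from the product/quotient rule and using the divergence equations for $u_1$ and $u_2$. Write $u_1 = w u_2$, so that $\nabla u_1 = u_2 \nabla w + w \nabla u_2$. The natural object to look at is $\mathrm{div}(u_2^2 A \nabla w)$, and the goal is to show it equals the stated right-hand side. First I would expand
\begin{equation*}
    u_2^2 A \nabla w = u_2 A(u_2 \nabla w) = u_2 A(\nabla u_1 - w \nabla u_2) = u_2 A \nabla u_1 - u_1 A \nabla u_2,
\end{equation*}
using $w u_2 = u_1$. Taking the divergence of this and applying the Leibniz rule for $\mathrm{div}$ of a scalar times a vector field gives
\begin{equation*}
    \mathrm{div}(u_2^2 A \nabla w) = u_2\, \mathrm{div}(A\nabla u_1) + \nabla u_2 \cdot A \nabla u_1 - u_1\, \mathrm{div}(A \nabla u_2) - \nabla u_1 \cdot A \nabla u_2.
\end{equation*}

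Next I would substitute the equations $\mathrm{div}(A \nabla u_i) = \mathrm{div}(\vec f_i)$, turning the first and third terms into $u_2\,\mathrm{div}(\vec f_1) - u_1\,\mathrm{div}(\vec f_2)$. The point where symmetry of $A$ enters is the pair of first-order terms $\nabla u_2 \cdot A \nabla u_1 - \nabla u_1 \cdot A \nabla u_2$: since $A = A^T$, these two quadratic forms coincide and cancel exactly. (This is precisely why the lemma's hypothesis is needed; for non-symmetric $A$ one would be left with an extra term $\nabla u_2 \cdot (A - A^T)\nabla u_1$.) So after this step
\begin{equation*}
    \mathrm{div}(u_2^2 A \nabla w) = u_2\, \mathrm{div}(\vec f_1) - u_1\, \mathrm{div}(\vec f_2).
\end{equation*}

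Finally I would rewrite the right-hand side in the divergence-plus-lower-order form claimed in the statement, again by the Leibniz rule: $u_2 \,\mathrm{div}(\vec f_1) = \mathrm{div}(u_2 \vec f_1) - \vec f_1 \cdot \nabla u_2$ and $u_1\, \mathrm{div}(\vec f_2) = \mathrm{div}(u_1 \vec f_2) - \vec f_2 \cdot \nabla u_1$. Subtracting gives exactly
\begin{equation*}
    \mathrm{div}(u_2 \vec f_1 - u_1 \vec f_2) + (\vec f_2 \cdot \nabla u_1 - \vec f_1 \cdot \nabla u_2),
\end{equation*}
which is the asserted formula.

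There is no serious obstacle here; the computation is routine once one commits to expressing everything through $u_1 = w u_2$. The only things to be careful about are (i) invoking symmetry of $A$ at the right moment to kill the $\nabla u_1 \cdot A \nabla u_2$ terms, and (ii) keeping the regularity bookkeeping honest — the manipulations are justified in the weak (distributional) sense provided $u_i \in H^1_{loc}$ with the stated equations, and $w = u_1/u_2$ is locally in $H^1$ wherever $u_2$ is bounded below, which is the setting in which the lemma will be applied (with $u_2 > 0$). I would state the identity first as a formal computation for smooth functions and then remark that it extends to the weak formulation by the usual density argument.
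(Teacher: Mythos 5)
Your proof is correct and is essentially the same direct computation as the paper's: the paper expands $u_2\,\mathrm{div}(A\nabla(u_2w))$ and uses symmetry of $A$ to recognize the left side as $\mathrm{div}(u_2^2A\nabla w)$, while you start from the identity $u_2^2A\nabla w = u_2A\nabla u_1 - u_1A\nabla u_2$ and use symmetry to cancel the cross terms $\nabla u_2\cdot A\nabla u_1 - \nabla u_1\cdot A\nabla u_2$. The two presentations are algebraically equivalent, with your decomposition being perhaps slightly more symmetric in $u_1,u_2$.
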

\begin{proof}
    As $u_{1}=u_{2}w$, by expanding $\ds u_{2}div\Big(A\nabla(u_{2}w)\Big)=u_{2}div(\vec{f}_{1})$, we have
    \begin{equation*}
        u_{2}^{2}div(A\nabla w)+2u_{2}A(\nabla u_{2},\nabla w)=u_{2}div(\vec{f}_{1})-u_{2}w\cdot div(A\nabla u_{2}).
    \end{equation*}
    Its left-hand side is
    \begin{equation*}
        u_{2}^{2}div(A\nabla w)+A(\nabla u_{2}^{2},\nabla w)=div(u_{2}^{2}A\nabla w),
    \end{equation*}
    and the right-hand side is
    \begin{equation*}
        u_{2}div(\vec{f}_{1})-u_{1}div(\vec{f}_{2})=div(u_{2}\vec{f}_{1}-u_{1}\vec{f_{2}})+(\vec{f}_{2}\cdot\nabla u_{1}-\vec{f}_{1}\cdot\nabla u_{2}).
    \end{equation*}
\end{proof}
Our straight boundary version of Theorem \ref{HOBH} is the following, where we allow some right-hand side.
\begin{theorem}\label{HOBH straight}
    Assume that $\lambda I\leq A\leq\Lambda I$ and $u_{1},u_{2}>0$ defined on $B_{1}$ satisfy
    \begin{equation}\label{standardx}
        div(A\cdot\nabla u_{i})=div(x_{n}\vec{f}_{i}),\quad u_{i}\Big|_{B_{1}'}=0.
    \end{equation}
    \begin{itemize}
        \item[(a)] If $D^{k-1}_{T}\vec{f}_{i},D^{k-1}_{T}A\in C^{\alpha}$ for some $k\geq1$, then the ratio $\ds w=\frac{u_{1}}{u_{2}}$ is tangentially $C^{k,\alpha}$. More precisely, $D^{k-1}_{T}w\in C^{1,\alpha}(B_{1/2})$.
        \item[(b)] If $\vec{f}_{i},A\in C^{k-1,\alpha}$ for some $k\geq1$, then the ratio $\ds w=\frac{u_{1}}{u_{2}}$ is $C^{k,\alpha}(B_{1/2})$.
    \end{itemize}
\end{theorem}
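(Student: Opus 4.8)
The plan is to reduce Theorem \ref{HOBH straight} to the model degenerate equation for $w=u_1/u_2$ and then iterate Theorem \ref{degenerateschauder}, handling tangential derivatives first and normal ones afterwards.

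\textbf{Setup and the case $k=1$.} Since $B_1'$ is flat and the right-hand side $x_n\vec{f}_i$ vanishes on $B_1'$, standard boundary Schauder theory for the uniformly elliptic divergence operator gives $u_i\in C^{1,\alpha}$ up to $B_1'$ when $A,\vec{f}_i\in C^{\alpha}$. Writing $u_i=x_n g_i$ with $g_i(x)=\int_0^1\partial_n u_i(x',sx_n)\,ds\in C^{\alpha}$ and $g_i|_{x_n=0}=\partial_n u_i|_{x_n=0}$, the Carleson estimate together with the Hopf lemma (applicable because $x_n\vec{f}_i$ is a subordinate perturbation near $B_1'$) gives $c\leq g_i\leq C$ on $B_{3/4}^{+}$. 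By Lemma \ref{ratio}, $w=g_1/g_2$ then solves
\begin{equation*}
div\bigl(x_n^{2}\tilde{A}\,\nabla w\bigr)=div\bigl(x_n^{2}\vec{F}\bigr)+x_n G,\qquad \tilde{A}=g_2^{2}A,\quad \vec{F}=g_2\vec{f}_1-g_1\vec{f}_2,\quad G=\vec{f}_2\cdot\nabla u_1-\vec{f}_1\cdot\nabla u_2,
\end{equation*}
where $\tilde{A}$ is symmetric, uniformly elliptic and $C^{\alpha}$, while $\vec{F},G\in C^{\alpha}$. Since $w=g_1/g_2\in L^{\infty}\subset L^{2}$, Theorem \ref{degenerateschauder} (with the $x_n G$ term absorbed as in the Remark following its proof) gives $w\in C^{1,\alpha}(B_{1/2}^{+})$, which is the case $k=1$ of both (a) and (b).

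\textbf{Proof of (a).} Induct on $k$. The key point is that a tangential vector field $e$ commutes with multiplication by $x_n^{2}$, so differentiating the displayed equation yields
\begin{equation*}
div\bigl(x_n^{2}\tilde{A}\,\nabla(\partial_e w)\bigr)=div\bigl(x_n^{2}(\partial_e\vec{F}-(\partial_e\tilde{A})\nabla w)\bigr)+x_n\,\partial_e G;
\end{equation*}
that is, $\partial_e w$ solves an equation of the \emph{same} type, with the same coefficient $\tilde{A}$ but a new right-hand side. Iterating, $D^{k-1}_{T}w$ solves such an equation whose data is assembled from $\partial_e^{a}\vec{F}$, $\partial_e^{a}\tilde{A}$, $\partial_e^{a}G$ ($a\leq k-1$) and the already-controlled $\nabla\partial_e^{b}w$ ($b\leq k-2$). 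The tangential $C^{k-1,\alpha}$ regularity of $\tilde{A},\vec{F},G$ follows from a parallel induction on the non-degenerate equations for the $u_i$: $\partial_e^{k-1}u_i$ vanishes on $B_1'$ and solves $div(A\nabla\,\cdot\,)=div(\vec{H})$ with $\vec{H}=x_n\partial_e^{k-1}\vec{f}_i-\sum_{1\leq a\leq k-1}\binom{k-1}{a}(\partial_e^{a}A)\nabla\partial_e^{k-1-a}u_i\in C^{\alpha}$ (reading ``$D^{k-1}_{T}A,D^{k-1}_{T}\vec{f}_i\in C^{\alpha}$'' as tangentially $C^{k-1,\alpha}$, so in particular $A,\vec{f}_i\in C^{\alpha}$, and using the inductive regularity of the $u_i$), hence $\partial_e^{k-1}u_i\in C^{1,\alpha}$ and $\partial_e^{k-1}g_i=\int_0^1\partial_n\partial_e^{k-1}u_i(x',sx_n)\,ds\in C^{\alpha}$. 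Applying Theorem \ref{degenerateschauder} a total of $k-1$ times on a decreasing family of half-balls gives $D^{k-1}_{T}w\in C^{1,\alpha}(B_{1/2}^{+})$.

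\textbf{Proof of (b) and the main obstacle.} Under the stronger hypothesis $A,\vec{f}_i\in C^{k-1,\alpha}$ one has $u_i\in C^{k,\alpha}$ up to $B_1'$, hence $\tilde{A},\vec{F},G\in C^{k-1,\alpha}$, and (a) gives $D^{k-1}_{T}w\in C^{1,\alpha}$; it remains to control the purely normal derivatives $\partial_n^{a}w$, $2\leq a\leq k$. The naive identity $w=g_1/g_2$ only yields $w\in C^{k-1,\alpha}$, so the extra order must come from the equation. The device is to keep the equation in divergence form in the $x_n$ variable: rearranging and using that $i<n$ commutes with $x_n^{2}$,
\begin{equation*}
\partial_n\bigl(x_n^{2}\tilde a^{nj}\partial_j w-x_n^{2}F^{n}\bigr)=\sum_{i<n}\partial_i\bigl(x_n^{2}(F^{i}-\tilde a^{ij}\partial_j w)\bigr)+x_n G,
\end{equation*}
and integrating in $x_n$ from $0$ — where both sides vanish (the built-in compatibility) — expresses $x_n^{2}\tilde a^{nn}\partial_n w$, hence $\partial_n w$, in terms of $F^{i},G,\tilde a^{ij}$, tangential derivatives of $w$, and smoothing operators $w\mapsto\frac{1}{x_n^{2}}\int_0^{x_n}t^{2}\partial_i(\cdot)\,dt$ and $\frac{1}{x_n^{2}}\int_0^{x_n}t(\cdot)\,dt$. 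Because $\tilde a^{nn}$ is bounded below, these operators neither lose $x_n$-regularity nor reintroduce the singular weight; differentiating this identity repeatedly, feeding in the lower-order derivatives already controlled by (a) and by the previous level, and using compatibility relations such as $\tilde a^{nj}\partial_j w|_{x_n=0}=0$ (the equation restricted to $B_1'$), one obtains $\partial_n^{a}w\in C^{\alpha}$ for all $a\leq k$. Together with (a) this gives $w\in C^{k,\alpha}(B_{1/2}^{+})$, the interior part being standard elliptic regularity since $u_2>0$ there. The main obstacle is the derivative bookkeeping in this last step: each differentiation must land only on quantities already controlled, which forces one to postpone taking tangential divergences as long as possible so that no term ever carries more than $k$ derivatives of $w$; carrying this out carefully is the main technical content of the proof.
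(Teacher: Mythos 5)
Your treatment of $k=1$ and of part (a) is essentially the paper's: you pass to $g_i=u_i/x_n$, invoke Hopf and boundary Schauder for the $u_i$, apply Lemma \ref{ratio} to obtain the degenerate equation for $w$, and then iterate Theorem \ref{degenerateschauder} by applying tangential derivatives, which commute with the weight $x_n^2$, together with a parallel induction on the non-degenerate equations for $\partial_e^{m}u_i$. This matches the paper almost line by line. For part (b) the paper simply states that one should ``inductively take normal derivatives of \eqref{degenerate}'' to prove $D_T^{k-l}\partial_n^{l}w\in C^{\alpha}$ for $0\le l\le k$, whereas you solve the $\partial_n$--divergence for $x_n^2\tilde a^{nj}\partial_j w$ by integrating from $x_n=0$ and differentiating the resulting integral representation. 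That is a legitimate alternative device and it makes the compatibility at $x_n=0$ explicit, which the paper leaves implicit; both arguments ultimately hinge on the same bookkeeping (trading one normal derivative of $w$ for tangential derivatives and one derivative of the data), and both are left at the level of a sketch.

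Two concrete corrections for part (b). First, the ``compatibility relation'' you quote, $\tilde a^{nj}\partial_j w|_{x_n=0}=0$, is not what the equation gives: sending $x_n\to0$ in $x_n\,\partial_i(\tilde a^{ij}\partial_j w)+2\tilde a^{nj}\partial_j w=x_n\,\partial_i F^{i}+2F^{n}+G$ yields $2\,\tilde a^{nj}\partial_j w\big|_{x_n=0}=2F^{n}+G\big|_{x_n=0}$, and it is precisely this relation that makes the integrand in your representation vanish to the right order at $x_n=0$. Second, the assertion that the smoothing operators ``neither lose $x_n$-regularity'' is not free: for example $\partial_n\bigl(x_n^{-2}\int_0^{x_n}tG\,dt\bigr)=x_n^{-1}G-2x_n^{-3}\int_0^{x_n}tG\,dt$, which with only $G\in C^{\alpha}$ is merely $O(x_n^{\alpha-1})$; it becomes $C^{\alpha}$ once one uses $G\in C^{1,\alpha}$ (available in part (b) since $\vec f_i\in C^{1,\alpha}$ and $u_i\in C^{2,\alpha}$). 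This is exactly where the stronger hypothesis of (b) enters, and it should be said. Finally, you speak only of ``purely normal derivatives $\partial_n^{a}w$''; to conclude $w\in C^{k,\alpha}$ one also needs the mixed derivatives, and the cleanest way to organize the induction is the paper's double index claim $D_T^{k-l}\partial_n^{l}w\in C^{\alpha}$ for $0\le l\le k$, which you should adopt explicitly.
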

Before proving Theorem \ref{HOBH straight}, let's recall that for a non-degenerate equation
\begin{equation}\label{standard}
    \partial_{i}(a^{ij}\partial_{j}u)=div(\vec{f}),\quad u\Big|_{B_{1}'}=0,
\end{equation}
there is a standard $C^{1,\alpha}$ boundary estimate.
\begin{theorem}\label{standardschauder}
Assume that $A\in C^{\alpha}$ and $\lambda|\xi|^{2}\leq\xi^{T}A\xi$. If $u$ be a solution of \eqref{standard}, then there is $C=C(n,\alpha,\lambda,\|A\|_{C^{\alpha}(B_{1}^{+})})$ so that
\begin{equation}
    \|u\|_{C^{1,\alpha}(B_{1/2}^{+})}\leq C([\vec{f}]_{C^{\alpha}(B_{1}^{+})}+\|u\|_{L^{2}(B_{1}^{+})}).
\end{equation}
\end{theorem}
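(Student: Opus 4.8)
The plan is to recall the classical Campanato/Morrey decay approach for divergence-form equations with $C^\alpha$ coefficients and zero Dirichlet data on a flat piece of the boundary. The scheme is parallel to the one already carried out in Proposition \ref{gammaschauder}, but now the weight $x_n^2$ is absent and the argument is the standard, non-degenerate one. First I would normalize: after a linear change of coordinates and rescaling we may assume $a^{ij}(0)=\delta^{ij}$ and $[a^{ij}]_{C^\alpha(B_1^+)}\le\varepsilon_0$ for a small constant $\varepsilon_0$ to be fixed. Then at each scale $\rho\le 1/2$ I compare $u$ with the solution $h$ of the constant-coefficient problem $\Delta h=0$ in $B_\rho^+$, $h=u$ on $(\partial B_\rho)^+$, $h=0$ on $B_\rho'$. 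Odd reflection across $\{x_n=0\}$ makes $h$ harmonic in the full ball $B_\rho$, so $h$ enjoys interior estimates and in particular a good linear approximation $l(x)=h(0)+x\cdot\nabla h(0)$ with $h(0)=0$ and the Morrey-type decay $\rho^{-n-2}\int_{B_\rho^+}|h-l|^2\le C\rho^{2\alpha}\cdot(\text{average of }|h|^2)$ for any exponent $\alpha<1$.

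The second step is to control the error $u-h$. It solves $\partial_i(a^{ij}\partial_j(u-h))=\mathrm{div}(\vec g)$ with $\vec g = \vec f + (\delta^{ij}-a^{ij})\partial_j h\,e_i$ and zero boundary data on $(\partial B_\rho)^+\cup B_\rho'$, so by the energy estimate (Caccioppoli plus Poincaré, the non-degenerate analogues of Propositions \ref{caccioppoli}, \ref{dirichlet}, \ref{regular poincare}) we get
\begin{equation*}
\int_{B_\rho^+}|u-h|^2 \le C\rho^2\int_{B_\rho^+}|\vec g|^2 \le C\rho^2\Big(\rho^{2\alpha}[\vec f]_{C^\alpha}^2 + \varepsilon_0^2\rho^{2\alpha}\!\!\int_{B_\rho^+}|\nabla h|^2\Big).
\end{equation*}
Combining with the decay of $h-l$ and choosing a shrinking ratio $S\le 1/4$ with $CS^{2-2\alpha}\le 1/2$, then $\varepsilon_0$ small enough, yields the iteration
$\sigma_{k+1}^2 \le \tfrac12\sigma_k^2 + C\,S^{2k\alpha}([\vec f]_{C^\alpha}^2+\|u\|_{L^2}^2)$, where $\sigma_k^2 = S^{-k(n+2+2\alpha)}\int_{B_{S^k}^+}|u-P_k|^2$ and $P_k=\sum_{i\le k}l_i$ is the accumulated linear approximation. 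This gives uniform boundedness of $\sigma_k$, summability of $|l_k(0)|+|\nabla l_k|\le CS^{k\alpha}(\dots)$, hence convergence $P_k\to P$ with $\|P\|_{C^{0,1}(B_1^+)}\le C([\vec f]_{C^\alpha}+\|u\|_{L^2})$, and the Morrey bound $\rho^{-n-2-2\alpha}\int_{B_\rho^+}|u-P|^2\le C(\dots)^2$ at every boundary point on $B_{1/2}'$.

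Finally, I would upgrade this boundary Morrey estimate to the full $C^{1,\alpha}$ bound. Since the same decay holds at every point of $B_{1/2}'$, and interior Schauder gives the analogous decay at interior points, a standard covering/interpolation argument (Campanato's characterization of Hölder spaces by mean oscillation decay) converts $\rho^{-n-2-2\alpha}\int_{B_\rho^+}|u-(\text{affine})|^2\le C$ into $\|u\|_{C^{1,\alpha}(B_{1/2}^+)}\le C([\vec f]_{C^\alpha}+\|u\|_{L^2})$. I do not expect any genuine obstacle here: the only mild point is making the reflected-harmonic comparison rigorous (the trace $h|_{B_\rho'}=0$ follows because $u$ is an $H^1$ function vanishing there and $u-h\in H^1_0$), but this is routine. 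In effect the statement is the flat, non-degenerate, classical prototype of Theorem \ref{degenerateschauder}, and one could alternatively just cite a standard reference such as Gilbarg–Trudinger or Giaquinta for it; I include the sketch only to keep the exposition self-contained and to mirror the structure of Proposition \ref{gammaschauder}.
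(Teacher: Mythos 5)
The paper does not supply a proof of Theorem \ref{standardschauder}: it is explicitly recalled as the classical $C^{1,\alpha}$ boundary Schauder estimate for divergence-form equations with $C^{\alpha}$ coefficients and zero Dirichlet data on a flat boundary piece, and is then used without further comment. Your Campanato-iteration sketch (with the small bookkeeping step that one first reduces to $\vec f(0)=0$ by subtracting a linear function, exactly as the paper does at the end of Proposition \ref{gammaschauder}) is a correct, self-contained derivation that mirrors the paper's own proof of Theorem \ref{degenerateschauder}; as you note, citing Gilbarg--Trudinger or Giaquinta is equally consonant with the paper's intent.
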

Notice that when $u$ vanishes at $B_{1}'$, then $u\in C^{1,\alpha}$ implies $\ds\frac{u}{x_{n}}\in C^{\alpha}$. With this let's prove Theorem \ref{HOBH straight}.

{\it Proof of Theorem \ref{HOBH straight}}. We first show part (a). When $k=1$, then first we know $\ds\nabla u_{i},\frac{u_{i}}{x_{n}}\in C^{\alpha}(B_{3/4}^{+})$ by Theorem \ref{standardschauder} and $\ds\frac{u_{i}}{x_{n}}\geq\varepsilon>0$ by Hopf lemma.
    By Lemma \ref{ratio}, we know $\ds w=\frac{u_{1}}{u_{2}}$ satisfies
    \begin{equation*}
        div(u_{2}^{2}A\nabla w)=div(x_{n}u_{2}\vec{f}_{1}-x_{n}u_{1}\vec{f_{2}})+x_{n}(\vec{f}_{2}\cdot\nabla u_{1}-\vec{f}_{1}\cdot\nabla u_{2}).
    \end{equation*}
    By writing
    \begin{equation*}
        \tilde{A}=\frac{u_{2}^{2}}{x_{n}^{2}}A,\quad\vec{f}=\frac{u_{2}}{x_{n}}\vec{f}_{1}-\frac{u_{1}}{x_{n}}\vec{f_{2}},\quad g=\vec{f}_{2}\cdot\nabla u_{1}-\vec{f}_{1}\cdot\nabla u_{2},
    \end{equation*}
    we simplify the equation of $w$ to the form \eqref{degenerate}. The fact $\ds\frac{u_{2}}{x_{n}}\geq\varepsilon>0$ ensures that $\tilde{A}$ is still a uniformly elliptic matrix. Besides, $\tilde{A},\vec{f},g\in C^{\alpha}(B_{3/4}^{+})$. Therefore, $w\in C^{1,\alpha}(B_{1/2}^{+})$ by applying Theorem \ref{degenerateschauder}.
    
    When $k\geq2$, we first apply $D^{k-1}_{T}$ to \eqref{standardx} and obtain $\ds D^{k-1}_{T}u_{i}\in C^{1,\alpha}(B_{3/4}^{+})$. Therefore, $D^{k-1}_{T}\tilde{A},D^{k-1}_{T}\vec{f},D^{k-1}_{T}g\in C^{\alpha}(B_{3/4}^{+})$. By induction hypothesis, we can also assume $D^{k-2}w\in C^{1,\alpha}(B_{3/4}^{+})$. Now we just apply $D^{k-1}_{T}$ to \eqref{degenerate} and obtain $D^{k-1}w\in C^{1,\alpha}(B_{1/2}^{+})$.

    For part (b), we can inductively prove that for all $0\leq l\leq k$,
    \begin{equation*}
        D_{T}^{k-l}\partial_{n}^{l}w\in C^{\alpha}(B_{1/2}^{+}).
    \end{equation*}
    The base case $k=0,1$ is shown in part (a). For larger $l$, we just need to inductively take normal derivatives to \eqref{degenerate}.
\qed

\subsection{Boundary straightening}To avoid abuse of coordinate, from now on it's necessary to introduce two variables $x$ and $y$ to represent different coordinate systems. We call the original coordinate $y$-coordinate, and the new coordinate $x$-coordinate. That is, the coordinate change \eqref{ytox abuse}, is better written as
\begin{equation}\label{ytox}
    x'=y',\quad x_{n}=y_{n}-\Gamma(y').
\end{equation}
We can also treat $\Gamma$ as a function in $x$-coordinate, i.e. $\Gamma(x)=\Gamma(x')=\Gamma(y')$. The inverse coordinate change will be
\begin{equation}\label{xtoy}
    y(x)=x+\Gamma(x)e_{n}.
\end{equation}
If $u$ satisfies an equality in $y$-coordinate, then the following coordinate-change lemma gives the corresponding equality of $u$ in $x$-coordinate:
\begin{lemma}\label{straightening change}
Let $x=(x_{1},...,x_{n})$ and $y=(y_{1},...,y_{n})$ be two coordinate charts. If $\partial_{y_{p}}(b^{pq}\partial_{y_{q}}u)=\partial_{y_{p}}(f_{p})$, then
\begin{equation}
    \partial_{x_{i}}(a^{ij}\partial_{x_{j}}u)=\partial_{x_{i}}\Big(\det(\frac{\partial y}{\partial x})\frac{\partial x_{i}}{\partial y_{p}}f_{p}\Big),\ with\ a^{ij}=\det(\frac{\partial y}{\partial x})b^{pq}\frac{\partial x_{i}}{\partial y_{p}}\frac{\partial x_{j}}{\partial y_{q}}.
\end{equation}
\end{lemma}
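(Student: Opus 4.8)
The plan to prove Lemma~\ref{straightening change} is to pass through the weak formulation, since the equation is in divergence form and changes of variables interact transparently with integration by parts. I would first record that $\partial_{y_p}(b^{pq}\partial_{y_q}u)=\partial_{y_p}f_p$ in $y$-coordinates is equivalent to the integral identity
\begin{equation*}
\int b^{pq}\,\partial_{y_q}u\,\partial_{y_p}\psi\,dy=\int f_p\,\partial_{y_p}\psi\,dy
\end{equation*}
holding for every compactly supported test function $\psi$, and the goal is to produce the analogous identity with coefficient matrix $a^{ij}$ and right-hand vector $\det(\tfrac{\partial y}{\partial x})\,\tfrac{\partial x_i}{\partial y_p}f_p$ in $x$-coordinates.

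Next I would perform the substitution $y=y(x)$ in both integrals. Writing $J=\det(\tfrac{\partial y}{\partial x})$ and setting $\varphi(x)=\psi(y(x))$, the chain rule gives $\partial_{y_q}u=\tfrac{\partial x_j}{\partial y_q}\partial_{x_j}u$ and $\partial_{y_p}\psi=\tfrac{\partial x_i}{\partial y_p}\partial_{x_i}\varphi$, while $dy=J\,dx$. After substitution the left-hand side becomes $\int\big(J\,b^{pq}\,\tfrac{\partial x_i}{\partial y_p}\,\tfrac{\partial x_j}{\partial y_q}\big)\partial_{x_j}u\,\partial_{x_i}\varphi\,dx=\int a^{ij}\partial_{x_j}u\,\partial_{x_i}\varphi\,dx$, and the right-hand side becomes $\int\big(J\,\tfrac{\partial x_i}{\partial y_p}f_p\big)\partial_{x_i}\varphi\,dx$. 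Since the coordinate change is a diffeomorphism, $\psi\mapsto\psi\circ y$ is a bijection of the space of test functions, so the resulting identity holds for all $\varphi$; this is precisely the weak form of the asserted equation, and in the smooth setting it then holds classically as well.

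Alternatively — and this is the one spot where something beyond index bookkeeping is needed — one can verify the formula directly in strong form, where the computation rests on the Piola identity $\sum_i\partial_{x_i}\big(J\,\tfrac{\partial x_i}{\partial y_p}\big)=0$ for each $p$ (the divergence-free property of the cofactor matrix of $\tfrac{\partial y}{\partial x}$), together with $\tfrac{\partial x_i}{\partial y_p}\partial_{x_i}=\partial_{y_p}$: one writes $a^{ij}\partial_{x_j}u=J\,\tfrac{\partial x_i}{\partial y_p}\,b^{pq}\partial_{y_q}u$, pushes $\partial_{x_i}$ past the Piola-exact factor, and arrives at $J\,\partial_{y_p}(b^{pq}\partial_{y_q}u)=J\,\partial_{y_p}f_p$, which equals $\partial_{x_i}\big(J\,\tfrac{\partial x_i}{\partial y_p}f_p\big)$ by the same two facts applied in reverse. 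I would present the weak-formulation argument as the main proof, since it absorbs the Piola identity into the ordinary change-of-variables formula and tolerates merely measurable $b^{pq}$.

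There is no real obstacle here; the lemma is essentially a bookkeeping statement. The only thing requiring care is tracking which derivative falls on $u$ and which on the test function, and keeping the Jacobian factor $J$ attached to the correct object, so that the transformed integrands reassemble into $a^{ij}$ on the left and into $J\,\tfrac{\partial x_i}{\partial y_p}f_p$ on the right exactly as in the statement.
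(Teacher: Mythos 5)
Your main argument via the weak formulation and change of variables is exactly the paper's proof, down to the same substitutions and the same reassembly of the transformed integrands into $a^{ij}$ and $\det(\tfrac{\partial y}{\partial x})\tfrac{\partial x_i}{\partial y_p}f_p$. The alternative strong-form computation via the Piola identity is a nice extra, but the primary route you chose matches the paper.
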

\begin{proof}
Let $\varphi\in C^{\infty}_{0}$ be a test function, integration by parts in $y$-coordinate gives
\begin{equation*}
\int b^{pq}\partial_{y_{p}}\varphi\partial_{y_{q}}u dy=\int f_{p}\partial_{y_{p}}\varphi dy.
\end{equation*}
We turn both sides into integration in $x$-coordinate, then
\begin{equation*}
LHS=\int b^{pq}\Big(\frac{\partial x_{i}}{\partial y_{p}}\partial_{x_{i}}\varphi\Big)\Big(\frac{\partial x_{j}}{\partial y_{q}}\partial_{x_{j}}u\Big)\Big(\det(\frac{\partial y}{\partial x})dx\Big)=\int a^{ij}\partial_{x_{i}}\varphi\partial_{x_{j}}u dx,
\end{equation*}
and similarly
\begin{equation*}
RHS=\int f_{p}\Big(\frac{\partial x_{i}}{\partial y_{p}}\partial_{x_{i}}\varphi\Big)\det(\frac{\partial y}{\partial x})dx.
\end{equation*}
The desired identity now follows from integration by parts using $x$-coordinate.
\end{proof}
Now Theorem \ref{HOBH} is easily proven by combining Theorem \ref{HOBH straight} and Lemma \ref{straightening change}:

{\it Proof of Theorem \ref{HOBH}}. If $\Gamma\in C^{k,\alpha}$ then $\ds\frac{\partial x_{i}}{\partial y_{p}}$ is $C^{k-1,\alpha}$ in $y$-coordinate. By assumption, $b^{pq}\in C^{k-1,\alpha}$ in $y$-coordinate, and thus so does $a^{ij}$. Besides, $\Gamma\in C^{k,\alpha}$ also implies that the $x-y$ coordinate change is $C^{k,\alpha}$. More precisely, $y(x)$ is $C^{k,\alpha}$ in $x$-coordinate, so $a^{ij}(y(x))$ is also $C^{k-1,\alpha}$ in $x$-coordinate by chain rule. By applying Theorem \ref{HOBH straight} to the $x$-coordinate equation obtained by Lemma \ref{straightening change}, we arrive at the desired higher order boundary Harnack inequality.
\qed

\section{Growth rate at boundary and majorant power series}
\subsection{A global norm}
From now on, our goal will be to prove Theorem \ref{analytic}. To prove analyticity, we first develop global versions of Theorem \ref{degenerateschauder} and \ref{standardschauder} by controlling growth rate of the H\"older regularity near the spherical boundary $(\partial B_{1})^{+}$. For $X\in\mathbb{R}^{n}_{+}$, let's denote 
\begin{equation*}
    B_{r}^{+}(X):=B_{r}(X)\cap\mathbb{R}^{n}_{+}.
\end{equation*}

\begin{definition}
For any function $f$ defined in $B_{1}^{+}$ in $x$-coordinate, its global $C^{k,\alpha}$ norm with $b$-many normal derivatives is defined as
\begin{equation}
    [f]^{*,l}_{C_{b}^{k,\alpha}}:=\sup_{\substack{X\in B_{1}^{+}\\\Delta=1-|X|}}\max_{\substack{|\beta|=k\\\beta_{n}\leq b}}\Delta^{l}([D^{\beta}f]_{L^{\infty}(B_{\frac{\Delta}{l+1}}^{+}(X))}+\Delta^{\alpha}[D^{\beta}f]_{C^{\alpha}(B_{\frac{\Delta}{l+1}}^{+}(X))}).
\end{equation}
\end{definition}
In this paper, $b=0,1$, meaning that all (but at most one) derivatives are tangential. In our case we will choose $l=k$, see \eqref{power series} later.

This norm has a ``scaling invariance". If it's known that $[f]^{*,l}_{C_{b}^{k,\alpha}}\leq1$, then for any $X\in B_{1}^{+}$, we let $|z|<1$ such that $(X+\Delta_{X}\cdot z)\in B_{1}^{+}$, where $\Delta_{X}=1-|X|$, and set
    \begin{equation}
        f_{X,k,l}(z):=\Delta_{X}^{l-k}f(X+\Delta_{X}\cdot z),
    \end{equation}
    then we also have $[f_{X,k,l}]^{*,l}_{C_{b}^{k,\alpha}}\leq1$.

\subsection{Global degenerate Schauder estimate}

By using Theorem \ref{degenerateschauder} at $\mathbb{R}^{n-1}$ and using the non-degenerate Schauder estimates away from $\mathbb{R}^{n-1}$, Theorem \ref{degenerateschauder} holds for all $B_{1}^{+}(X)$. More precisely, we have the following lemma.
\begin{lemma}\label{centerhigher}
Let $X\in\mathbb{R}^{n}_{+}$ and $w\in L^{2}(B_{1}^{+}(X))$ satisfies \eqref{main} in $B_{1}^{+}(X)$. If $A\in C^{\alpha}$ with $\lambda I\leq A\leq\Lambda I$, then there is $C$ independent of the center $X$, so that
\begin{equation}
    \|w\|_{C^{1,\alpha}(B_{1/2}^{+}(X))}\leq C(\|\vec{f}\|_{C^{\alpha}(B_{1}^{+}(X))}+\|w\|_{L^{2}(B_{1}^{+}(X))}).
\end{equation}
\end{lemma}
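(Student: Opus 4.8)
\textbf{Proof proposal for Lemma \ref{centerhigher}.}

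The plan is to split the half-ball $B_{1}^{+}(X)$ into two regimes according to how close the center $X$ (more precisely, points of $B_{1/2}^{+}(X)$) sits to the hyperplane $\{x_{n}=0\}$, and patch together the two Schauder estimates already available: Theorem \ref{degenerateschauder} for points near the degenerate boundary, and the classical non-degenerate Schauder estimate (which applies since $x_{n}^{2}A$ is uniformly elliptic away from $\{x_{n}=0\}$) for points bounded away from it. Throughout I would only need to track that the constant $C$ does not depend on $X$, which is automatic because all the estimates invoked are translation invariant in the tangential directions and the dependence on $x_{n}$ enters only through explicit scaling.

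First I would reduce to the two model situations by a covering argument. Fix a point $Y\in B_{1/2}^{+}(X)$ at which we want to estimate $\|w\|_{C^{1,\alpha}}$, and write $d=Y_{n}$. If $d$ is comparable to $1$ (say $d\geq 1/16$), then on the ball $B_{d/2}(Y)$ the coefficient matrix $x_{n}^{2}A$ is uniformly elliptic with ellipticity constants comparable to $d^{2}\lambda$ and $d^{2}\Lambda$, and $x_{n}^{2}\vec f$ has the same $C^{\alpha}$ norm as $\vec f$ up to a factor depending only on $d$; applying the interior non-degenerate Schauder estimate (Theorem \ref{standardschauder}, interior version) and rescaling gives the bound with $C$ depending only on $n,\alpha,\lambda,\Lambda$. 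If instead $d$ is small, I would use that the half-ball $B_{r}^{+}(Y')$ centered at the projection $Y'=(Y_{1},\dots,Y_{n-1},0)$, with $r$ of order the distance from $Y$ to $\partial B_{1}^{+}(X)$, is contained in $B_{1}^{+}(X)$; since \eqref{main} holds there and the planar boundary piece of this half-ball lies on $\{x_{n}=0\}$, Theorem \ref{degenerateschauder} applied on $B_{r}^{+}(Y')$ (after translating in $x'$ and rescaling by $r$) controls $\|w\|_{C^{1,\alpha}(B_{r/2}^{+}(Y'))}$, and $Y\in B_{r/2}^{+}(Y')$. The two families of balls together cover $B_{1/2}^{+}(X)$, so taking the supremum of the pointwise $C^{1,\alpha}$ quantities over $Y$ yields the global estimate on $B_{1/2}^{+}(X)$.

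The only genuinely delicate point is the bookkeeping at the interface between the two regimes: one must verify that the standard $C^{1,\alpha}$ seminorm $[Dw]_{C^{\alpha}}$ taken over all of $B_{1/2}^{+}(X)$ — which involves pairs of points, one possibly in each regime — is controlled by the local estimates. This is handled by the usual interpolation-type argument: for two points $Y,Z\in B_{1/2}^{+}(X)$, either they both lie in a common ball from one of the two families (of radius comparable to $\max(Y_{n},Z_{n})$ or to the distance to the sphere), in which case the relevant local estimate applies directly, or $|Y-Z|$ is comparable to $\max(Y_{n},Z_{n})$, in which case the difference quotient $\frac{|Dw(Y)-Dw(Z)|}{|Y-Z|^{\alpha}}$ is dominated by $|Y-Z|^{-\alpha}(\|Dw\|_{L^{\infty}(\text{ball at }Y)}+\|Dw\|_{L^{\infty}(\text{ball at }Z)})$, and the $L^{\infty}$ bounds from the two local estimates close the argument. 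Since $\|w\|_{L^{2}(B_{1}^{+}(Y'))}\leq\|w\|_{L^{2}(B_{1}^{+}(X))}$ and likewise for the $C^{\alpha}$ norm of $\vec f$, all right-hand sides are bounded by the stated quantity, and the constant remains independent of $X$.
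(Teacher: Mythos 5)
Your approach matches the one the paper indicates (the paper gives only a one-sentence justification before the lemma, precisely that one combines Theorem~\ref{degenerateschauder} near $\{x_{n}=0\}$ with the non-degenerate interior Schauder estimate away from it and covers $B_{1/2}^{+}(X)$), and your interpolation bookkeeping for the $C^{\alpha}$ seminorm across the two regimes is the standard and correct way to finish. One small point worth fixing: in the small-$d$ regime you should take $r$ to be a \emph{fixed} constant (of order the distance from $Y$ to the spherical cap $\partial B_{1}(X)$, which is at least $1/2$), not the distance from $Y$ to the planar boundary, since that latter quantity is $Y_{n}$ and if $r\sim Y_{n}\to 0$ the rescaling of Theorem~\ref{degenerateschauder} on $B_{r}^{+}(Y')$ would produce constants blowing up like $r^{-1-\alpha-n/2}$; your phrasing ``distance from $Y$ to $\partial B_{1}^{+}(X)$'' leaves this ambiguous.
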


Now let's give a global version of Theorem \ref{degenerateschauder}. In this proposition, we describe the boundary growth rate of the $C^{1,\alpha}$ estimate, and the shrinking rate of interior balls is generalized to $\ds\frac{1}{l+2}$.

\begin{proposition}\label{globalschauder}
Let $l\geq0$ be an arbitrary integer. Assume that $A\in C^{\alpha}$ with $\lambda I\leq A\leq\Lambda I$ and $w$ satisfies \eqref{main} in $B_{1}^{+}$, which is
\begin{equation*}
    div(x_{n}^{2}A\nabla w)=div(x_{n}^{2}\vec{f}),
\end{equation*}
then there exists $C$ independent of $l$, such that
\begin{equation}
[w]^{*,l+1}_{C_{1}^{1,\alpha}}\leq C\{[f]^{*,l+1}_{C_{0}^{0,\alpha}}+(l+2)[w]^{*,l}_{C_{0}^{0,\alpha}}\}.
\end{equation}
\end{proposition}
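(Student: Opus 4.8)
The plan is to deduce Proposition \ref{globalschauder} from the fixed-scale estimate in Lemma \ref{centerhigher} by a rescaling argument that tracks the correct power of the distance-to-boundary $\Delta_X = 1-|X|$. Fix a point $X \in B_1^+$ and set $\Delta = \Delta_X$. First I would rescale the equation: define $w_X(z) := \Delta^{l+1-1} w(X + \tfrac{\Delta}{l+2} z) = \Delta^{l} w(X+\tfrac{\Delta}{l+2}z)$ — more precisely one wants the rescaling to be compatible with the definition of $[\,\cdot\,]^{*,l+1}_{C^{1,\alpha}_1}$, so the normalization factor $\Delta^{l+1}$ should be distributed so that one $\Delta$ is absorbed by the gradient. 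The key point is that equation \eqref{main} is scale-invariant in the right way: if $w$ solves $div(x_n^2 A \nabla w) = div(x_n^2 \vec f)$ then $w_X$ solves the same type of equation with coefficient $A_X(z) = A(X+\tfrac{\Delta}{l+2}z)$ (same ellipticity constants, and $[A_X]_{C^\alpha} \leq [A]_{C^\alpha}$ since we shrink) and right-hand side vector $\vec f_X(z) = \tfrac{\Delta}{l+2}\cdot(\text{rescaled }\vec f)$ times the overall normalization. One has to be a little careful: if $X$ is within distance $\sim \Delta$ of the hyperplane $\{x_n=0\}$ the relevant estimate is the degenerate one of Lemma \ref{centerhigher} (applied on a ball reaching down to $\{x_n=0\}$), while if $X$ is deep in the interior, $x_n^2$ is comparable to a constant on $B^+_{\Delta/(l+2)}(X)$ and the equation is uniformly elliptic, so the classical interior Schauder estimate applies; in both regimes the constant $C$ is uniform in $X$. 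Since $B_{\Delta/(l+2)}(X) \subset B_1$ whenever $|z| < 1$ and $X + \tfrac{\Delta}{l+2}z \in B_1^+$, applying Lemma \ref{centerhigher} (or its rescaled/interior analogue) to $w_X$ on the unit ball gives $\|w_X\|_{C^{1,\alpha}(B^+_{1/2})} \leq C(\|\vec f_X\|_{C^\alpha(B_1^+)} + \|w_X\|_{L^2(B_1^+)})$.

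The second step is to unwind the rescaling. The $C^{1,\alpha}$ norm of $w_X$ on $B^+_{1/2}$ controls, after scaling back, the quantity $\Delta^{l+1}\max_{|\beta|=1}([D^\beta w]_{L^\infty(B^+_{\Delta/2(l+2)}(X))} + (\tfrac{\Delta}{l+2})^\alpha [D^\beta w]_{C^\alpha})$, which up to harmless constants depending only on $l$ through factors like $(l+2)^{-\alpha} \leq 1$ is exactly one term in the supremum defining $[w]^{*,l+1}_{C^{1,\alpha}_1}$ — recalling that the definition uses balls of radius $\tfrac{\Delta}{l+2}$ (with $l$ replaced by $l+1$, i.e. radius $\tfrac{\Delta}{(l+1)+1} = \tfrac{\Delta}{l+2}$). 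On the right side, $\|w_X\|_{L^2(B_1^+)}$ scales to $\Delta^{l} \cdot (\tfrac{\Delta}{l+2})^{-n/2}\|w\|_{L^2(B^+_{\Delta/(l+2)}(X))}$, and I would control the $L^2$ norm by the $L^\infty$ norm times the volume of the ball, i.e. $\|w\|_{L^2} \leq C (\tfrac{\Delta}{l+2})^{n/2}\|w\|_{L^\infty(B^+_{\Delta/(l+2)}(X))}$, so the volume factors cancel and we are left with $\Delta^{l}\|w\|_{L^\infty} \leq (l+2)\cdot \Delta^{l}\|w\|_{L^\infty}$, which is bounded by $(l+2)[w]^{*,l}_{C^{0,\alpha}_0}$ — here the factor $(l+2)$ appears because $[w]^{*,l}_{C^{0,\alpha}_0}$ uses balls of radius $\tfrac{\Delta}{l+1}$ rather than $\tfrac{\Delta}{l+2}$, but since $\tfrac{\Delta}{l+2} < \tfrac{\Delta}{l+1}$ the smaller ball is contained in the larger one so this direction is free; the $(l+2)$ factor is genuinely needed to match the slightly different $\Delta$-power bookkeeping when $X$ ranges and overlapping balls are used, and this is what produces the $(l+2)$ coefficient in the statement. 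Similarly $\|\vec f_X\|_{C^\alpha(B_1^+)}$ scales to (a constant times) $[\vec f]^{*,l+1}_{C^{0,\alpha}_0}$.

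The third step is to upgrade from "one tangential-or-normal derivative controlled in $L^\infty$ and $C^\alpha$ on each small ball" to the full global norm $[w]^{*,l+1}_{C^{1,\alpha}_1}$: this is just the observation that taking the supremum over all $X$ of the rescaled estimates reproduces each of the finitely many terms (at most one normal derivative, the rest tangential) in the definition of the global norm, since Lemma \ref{centerhigher} delivers control on all first derivatives including $\partial_n w$, which is what "$b=1$" permits. One must double-check that the interior-regime estimate (when $X$ is far from $\{x_n=0\}$) also controls $\partial_n w$ — it does, trivially, being the ordinary elliptic estimate.

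The main obstacle I anticipate is the careful bookkeeping of the $\Delta$-powers and the $(l+2)$-factors: getting the rescaling $w_X(z) = \Delta^{l+1-k}w(X+\Delta z/(l+2))$ with $k=1$ to interface exactly with the (non-standard) definition of $[\,\cdot\,]^{*,l}_{C^{k,\alpha}_b}$, which bakes the ball-radius $\tfrac{\Delta}{l+1}$ into itself, and verifying that the $L^2 \to L^\infty$ passage on the right-hand side does not introduce any dependence on $l$ beyond the stated single factor $(l+2)$ and no dependence on $X$ in $C$. The degenerate-versus-interior dichotomy near $\{x_n=0\}$ is conceptually routine (Lemma \ref{centerhigher} is designed precisely to cover it uniformly) but needs to be stated, because the naive rescaling moves the hyperplane and one must confirm the rescaled hyperplane is still at $\{z_n = 0\}$, which it is since the change of variables $x_n \mapsto \tfrac{\Delta}{l+2} z_n$ fixes $\{x_n = 0\}$.
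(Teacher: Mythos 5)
Your overall strategy — rescale, apply Lemma \ref{centerhigher}, unwind — is the same as the paper's, and it is the right idea. But the key piece of bookkeeping, namely the origin of the $(l+2)$ factor and why it attaches only to the $w$ term and not to the $\vec f$ term, is not tracked correctly, and this is precisely the content of the proposition.

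The issue is in your second step. With $r=\Delta/(l+2)$ and $w_X(z)=\Delta^l w(X+rz)$, one has $\nabla_z w_X = \Delta^l r\,\nabla_x w = \frac{\Delta^{l+1}}{l+2}\nabla_x w$. So $\|w_X\|_{C^{1,\alpha}(B_{1/2}^+)}$ controls $\frac{\Delta^{l+1}}{l+2}\bigl(\|\nabla w\|_{L^\infty}+\cdots\bigr)$, not $\Delta^{l+1}\bigl(\|\nabla w\|_{L^\infty}+\cdots\bigr)$ as you wrote. You dismissed this discrepancy as a ``harmless constant depending on $l$,'' but it is not harmless — it is where the $(l+2)$ comes from. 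Moving the $\frac{1}{l+2}$ to the right-hand side multiplies \emph{both} terms by $(l+2)$; the $\vec f$ term then loses its $(l+2)$ again because the rescaled source is $\vec f_X(z)=\Delta^l\, r\,\vec f(X+rz)$, which itself carries a factor $r=\Delta/(l+2)$, while $\|w_X\|_{L^2}$ carries no such factor. That asymmetry is the whole mechanism, and your write-up instead inserts the $(l+2)$ trivially via $\Delta^l\|w\|_{L^\infty}\le (l+2)\Delta^l\|w\|_{L^\infty}$ and attributes it vaguely to ``different $\Delta$-power bookkeeping,'' which is circular. The paper's proof (reduce to $X=0$ by scaling invariance, cover $B^+_{1/(l+2)}$ by balls of radius $\frac{1}{2(l+2)}$, apply the rescaled Lemma \ref{centerhigher}) isolates exactly this $r^{-1}=l+2$ factor in the line $[\nabla w]_{C^0}\le C\{\Delta^{-l-1}[f]^{*,l+1}+r^{-1}\Delta^{-l}[w]^{*,l}\}$.

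Two secondary points. First, your claim that the change of variables ``fixes $\{x_n=0\}$'' is false when $X_n>0$: $x=X+rz$ sends $\{x_n=0\}$ to $\{z_n=-X_n/r\}$. This does not break the argument, because Lemma \ref{centerhigher} is already stated for balls centered at arbitrary $X\in\mathbb{R}^n_+$ (equivalently, one should dilate around the origin, $z\mapsto rz$, and then invoke Lemma \ref{centerhigher} at center $X/r$), but the justification you gave for the boundary/interior dichotomy is wrong as stated. Second, Lemma \ref{centerhigher} delivers control only on the half-radius ball $B^+_{\Delta/(2(l+2))}(X)$, whereas the global norm $[w]^{*,l+1}_{C^{1,\alpha}_1}$ requires the full ball $B^+_{\Delta/(l+2)}(X)$; you need the covering step supplied by Lemma \ref{4df} (pay a factor $2^{1-\alpha}$ on the H\"older seminorm) to close this, and your proposal omits it.
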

\begin{proof}
Let's write $\sigma=[w]^{*,l}_{C_{0}^{0,\alpha}}$ and $\chi=[f]^{*,l+1}_{C_{0}^{0,\alpha}}$. Besides, because of the ``scaling invariance", let's only consider the special case $X=0$, which is
\begin{equation*}
    [\nabla w]_{C^{0}(B_{\frac{1}{l+2}}^{+})}+[\nabla w]_{C^{\alpha}(B_{\frac{1}{l+2}}^{+})}\leq C\{\chi+(l+2)\sigma\}.
\end{equation*}
Other $X\in B_{1}^{+}$'s will be the same, except that we need to care about its scaling. For every $x\in B_{\frac{1}{l+2}}^{+}$, its $\ds\Delta\geq\frac{l+1}{l+2}$. Now let's set $\ds r=\frac{1}{l+2}$. From Lemma \ref{4df} which will be stated after the proof, it then suffices to show for every $x\in B_{\frac{1}{l+2}}^{+}$,
\begin{equation*}
    [\nabla w]_{C^{0}(B_{\frac{1}{2l+4}}^{+}(x))}+[\nabla w]_{C^{\alpha}(B_{\frac{1}{2l+4}}^{+}(x))}\leq C\{\chi+(l+2)\sigma\}.
\end{equation*}
By using a rescaled version of Lemma \ref{centerhigher}, we know
\begin{align*}
    [\nabla w]_{C^{0}(B_{\frac{1}{2l+4}}^{+}(x))}\leq&C(n,\alpha)\{\Delta^{-l-1}[f]^{*,l+1}_{C_{0}^{0,\alpha}}+r^{-1}\Delta^{-l}[w]^{*,l}_{C_{0}^{0,\alpha}}\}\notag\\
    \leq&C\cdot e\{\chi+(l+2)\sigma\}.
\end{align*}
The bound of $[\nabla w]_{C^{\alpha}(B_{\frac{1}{2l+4}}^{+}(x))}$ can be argued similarly.
\end{proof}
\begin{lemma}\label{4df}
For any function $f\in C^{\alpha}(B_{r}^{+}(X))$ with $X\in\mathbb{R}^{n}_{+}$ and $r>0$, we have
\begin{align}
    [f]_{L^{\infty}(B_{r}^{+}(X))}\leq&\sup_{x\in B_{r}^{+}(X)}[f]_{L^{\infty}(B_{r/2}^{+}(x))}\\
    [f]_{C^{\alpha}(B_{r}^{+}(X))}\leq&2^{1-\alpha}\sup_{x\in B_{r}^{+}(X)}[f]_{C^{\alpha}(B_{r/2}^{+}(x))}.
\end{align}
\end{lemma}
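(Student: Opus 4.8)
\textbf{Proof proposal for Lemma \ref{4df}.}

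The plan is to prove both inequalities by a direct covering argument, splitting into the two cases of points at distance $\leq r/2$ and $> r/2$ from each other. For the $L^\infty$ bound the statement is almost tautological: every point $y \in B_r^+(X)$ lies in $B_{r/2}^+(y)$, so $|f(y)| \leq [f]_{L^\infty(B_{r/2}^+(y))} \leq \sup_{x \in B_r^+(X)}[f]_{L^\infty(B_{r/2}^+(x))}$, and taking the supremum over $y$ gives the claim. So the only real content is the Hölder seminorm estimate.

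For the $C^\alpha$ bound, fix two points $y, z \in B_r^+(X)$ and estimate $|f(y) - f(z)|/|y-z|^\alpha$. The first step is the easy case $|y - z| \leq r/2$: then $y$ and $z$ both lie in the ball $B_{r/2}^+(y)$, so directly $|f(y) - f(z)| \leq [f]_{C^\alpha(B_{r/2}^+(y))}|y-z|^\alpha \leq \big(\sup_x [f]_{C^\alpha(B_{r/2}^+(x))}\big)|y-z|^\alpha$, which is even better than what is claimed (the factor $2^{1-\alpha}$ is not needed here). The second step is the case $|y - z| > r/2$. Here one cannot connect $y$ to $z$ within a single ball of radius $r/2$, so I would insert the midpoint $m = (y+z)/2 \in B_r^+(X)$ (note $B_r^+(X)$ is convex, being the intersection of two convex sets) and write $|f(y) - f(z)| \leq |f(y) - f(m)| + |f(m) - f(z)|$. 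Since $|y - m| = |m - z| = |y-z|/2$, and in the regime $|y-z|>r/2$ we still need $|y-m| = |y-z|/2$ to be at most $r/2$; but $|y-z|$ can be as large as $2r$ (the diameter of $B_r(X)$), so $|y-m|$ can be up to $r$, which does not fit in a ball of radius $r/2$. The fix is to iterate: subdivide the segment from $y$ to $z$ into $N$ equal pieces with $N$ chosen so each subsegment has length $\leq r/2$; along a segment of length $\leq 2r$ this needs only $N = 4$ pieces, giving consecutive points $y = p_0, p_1, \dots, p_N = z$ all in the convex set $B_r^+(X)$ with $|p_{i} - p_{i+1}| = |y-z|/N$.

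The main obstacle — really the only subtlety — is bookkeeping the constant so that it comes out as exactly $2^{1-\alpha}$ rather than something larger like $4^{1-\alpha}$ or a crude $N^{1-\alpha}$. I expect the intended argument is the sharper two-term splitting valid because one only ever needs to handle $|y - z| \leq r$ in practice (the points $x$ of the supremum and the center $X$ interact so that the relevant separations are bounded by $r$, not $2r$), in which case the single midpoint $m$ suffices: each of $|y-m|, |m-z|$ equals $|y-z|/2 \leq r/2$, so
\[
|f(y) - f(z)| \leq [f]_{C^\alpha(B_{r/2}^+(y))}\Big(\tfrac{|y-z|}{2}\Big)^\alpha + [f]_{C^\alpha(B_{r/2}^+(m))}\Big(\tfrac{|y-z|}{2}\Big)^\alpha \leq 2^{1-\alpha}\Big(\sup_x [f]_{C^\alpha(B_{r/2}^+(x))}\Big)|y-z|^\alpha,
\]
using $2 \cdot 2^{-\alpha} = 2^{1-\alpha}$. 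Dividing by $|y-z|^\alpha$ and taking the supremum over $y,z$ yields the stated inequality. I would then note in passing that the constant $2^{1-\alpha}$ is universal (independent of $r$, $X$, and $n$), which is what makes the estimate useful when $r = \frac{1}{2l+4} \to 0$ in the application inside Proposition \ref{globalschauder}.
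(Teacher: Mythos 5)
You correctly identify the delicate point — the midpoint split only works when $|y-z|\leq r$, while the diameter of $B_r^+(X)$ is $2r$ — but your attempted fix for the remaining range is wrong. You claim ``one only ever needs to handle $|y-z|\leq r$ in practice,'' but the lemma is stated and used as a seminorm bound over \emph{all} pairs in $B_r^+(X)$; indeed, in the application inside Proposition \ref{globalschauder} the lemma is invoked with $X=0$, $r=\frac{1}{l+2}$, and one genuinely needs the H\"older bound for pairs of points that are on opposite sides of the origin, at separation close to $\frac{2}{l+2}=2r$. So the gap is real, not a bookkeeping subtlety.

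The missing idea is the following: divide $\overline{x_0 x_4}$ into four equal pieces with marked points $x_0,\dots,x_4$, but telescope in only \emph{two} steps through the midpoint $x_2$, and bound each step using a ball centered at a \emph{quarter} point. Concretely, $|x_0-x_1|=|x_1-x_2|=\tfrac14|x_0-x_4|\leq r/2$, so both $x_0$ and $x_2$ lie in $\overline{B_{r/2}^+(x_1)}$; hence
\[
|f(x_0)-f(x_2)| \leq [f]_{C^\alpha(B_{r/2}^+(x_1))}\,\Big(\tfrac{|x_0-x_4|}{2}\Big)^\alpha,
\]
and symmetrically $|f(x_2)-f(x_4)|\leq [f]_{C^\alpha(B_{r/2}^+(x_3))}(\tfrac{|x_0-x_4|}{2})^\alpha$. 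Adding these two gives the factor $2\cdot 2^{-\alpha}=2^{1-\alpha}$. The key is that $x_1$ and $x_3$ serve as ball centers rather than as intermediate evaluation points; you considered them only as evaluation points in a four-term telescope (which indeed gives $4^{1-\alpha}$), and then abandoned the subdivision route prematurely. This is exactly what the paper's proof does.
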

\begin{proof}
The first inequality is obvious. To prove the second inequality, we pick any $x_{0},x_{4}\in B_{r}^{+}(X)$. We divide the line segment $\overline{x_{0}x_{4}}$ into four equal sub-segments using the points $\ds x_{i}=\frac{i}{4}x_{4}+\frac{4-i}{4}x_{0}$ with $i\in\{1,2,3\}$. We just need to combine the H\"older bounds in $B_{r/2}^{+}(x_{1})$ and $B_{r/2}^{+}(x_{3})$.
\end{proof}
\subsection{Global non-degenerate Schauder estimate}
Following the same method, we have a parallel result for the uniformly-elliptic equation \eqref{standard}.
\begin{proposition}\label{globaluniformschauder}
Let $l\geq0$ be an arbitrary integer. Assume that $A\in C^{\alpha}$ with $\lambda|\xi|^{2}\leq\xi^{T}A\xi$. If $u$ satisfies \eqref{standard} in $B_{1}^{+}$, which is
\begin{equation*}
    div(A\nabla u)=div(\vec{f}),
\end{equation*}
then there exists $C$ independent of $l$, such that
\begin{equation}
[u]^{*,l+1}_{C_{1}^{1,\alpha}}\leq C\{[f]^{*,l+1}_{C_{0}^{0,\alpha}}+(l+2)[u]^{*,l}_{C_{0}^{0,\alpha}}\}.
\end{equation}
\end{proposition}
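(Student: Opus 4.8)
The plan is to mirror exactly the structure of the proof of Proposition \ref{globalschauder}, replacing the application of the degenerate estimate (Lemma \ref{centerhigher}) with the corresponding statement for the uniformly elliptic equation \eqref{standard}. Concretely, I first record a global version of Theorem \ref{standardschauder} with the center moved: for any $X\in\mathbb{R}^n_+$, any solution $u$ of $\mathrm{div}(A\nabla u)=\mathrm{div}(\vec f)$ in $B_1^+(X)$ satisfies $\|u\|_{C^{1,\alpha}(B_{1/2}^+(X))}\le C(\|\vec f\|_{C^\alpha(B_1^+(X))}+\|u\|_{L^2(B_1^+(X))})$ with $C$ independent of $X$. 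This follows from Theorem \ref{standardschauder} at $\mathbb{R}^{n-1}$ together with standard interior Schauder estimates away from the flat boundary, just as Lemma \ref{centerhigher} was deduced in the degenerate case; the only point to note is that $u$ vanishes on the flat part so the boundary estimate applies there.

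Next, by the scaling invariance of the $[\,\cdot\,]^{*,l}_{C^{k,\alpha}_b}$ norm noted after the definition, it suffices to prove the bound at $X=0$, i.e.\ to control $[\nabla u]_{C^0(B_{1/(l+2)}^+)}+[\nabla u]_{C^\alpha(B_{1/(l+2)}^+)}$ by $C\{\chi+(l+2)\sigma\}$ where $\sigma=[u]^{*,l}_{C^{0,\alpha}_0}$ and $\chi=[f]^{*,l+1}_{C^{0,\alpha}_0}$. I would set $r=\tfrac1{l+2}$ and invoke Lemma \ref{4df} to reduce to a pointwise statement: it is enough to bound $[\nabla u]_{C^0(B_{1/(2l+4)}^+(x))}+[\nabla u]_{C^\alpha(B_{1/(2l+4)}^+(x))}$ for every $x\in B_{1/(l+2)}^+$. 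For such $x$ one has $\Delta_x=1-|x|\ge\tfrac{l+1}{l+2}$, so rescaling the global non-degenerate estimate to the ball $B_{r}^+(x)$ and using the definitions of the weighted norms gives
\begin{equation*}
[\nabla u]_{C^0(B_{1/(2l+4)}^+(x))}\le C\{\Delta_x^{-l-1}[f]^{*,l+1}_{C^{0,\alpha}_0}+r^{-1}\Delta_x^{-l}[u]^{*,l}_{C^{0,\alpha}_0}\}\le C\cdot e\{\chi+(l+2)\sigma\},
\end{equation*}
where the factor $e$ absorbs $(\tfrac{l+2}{l+1})^{l+1}\le e$. The $C^\alpha$ seminorm of $\nabla u$ is handled identically, with the extra $\Delta^\alpha$ weight in the definition matched by the $r^\alpha$ gained in the rescaling.

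Since the argument is word-for-word parallel to Proposition \ref{globalschauder} — the degenerate weight $x_n^2$ plays no role once one is at a ball where the estimate is purely interior or where the flat-boundary estimate Theorem \ref{standardschauder} is already available — I do not expect a genuine obstacle; the only thing to be careful about is the bookkeeping of the powers of $\Delta_x$ and $r$ so that the constant $C$ comes out independent of $l$, which is exactly where the uniform bound $(1+\tfrac1{l+1})^{l+1}\le e$ is used. I would therefore present the proof tersely, saying it follows the same lines as Proposition \ref{globalschauder} after substituting the non-degenerate Schauder estimate Theorem \ref{standardschauder} for Lemma \ref{centerhigher}, and spell out only the chain of inequalities above.
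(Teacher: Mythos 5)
Your proposal is correct and matches the paper's approach exactly: the paper gives no explicit proof of Proposition~\ref{globaluniformschauder}, merely stating that it follows ``by the same method'' as Proposition~\ref{globalschauder}, and your write-up spells out precisely that substitution — replacing the degenerate moved-center estimate Lemma~\ref{centerhigher} with the non-degenerate Theorem~\ref{standardschauder} (valid both at the flat boundary, where $u$ vanishes, and in the interior), then running the identical scaling-invariance reduction to $X=0$, the covering via Lemma~\ref{4df}, and the $(\tfrac{l+2}{l+1})^{l+1}\le e$ bookkeeping.
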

\subsection{Majorant power series}
\begin{definition}
For any function $f$ defined in $B_{1}^{+}$ in $x$-coordinate, we define its majorant coefficient and power series as
\begin{equation}\label{power series}
    P_{b}[f]^{(k)}:=[f]^{*,k}_{C_{b}^{k,\alpha}},\ P_{b}[f](t):=\sum_{k=0}^{\infty}P_{b}[f]^{(k)}\frac{t^{k}}{k!}.
\end{equation}
Coefficients are allowed to be $\infty$ and convergence of the power series is not required.
\end{definition}
\begin{remark}
    Here, $P_{b}[f]^{(k)}$ happens to be the $k$-th derivative of $P_{b}[f](t)$, evaluated at $t=0$. Therefore, the abuse of notation $P_{b}[f]^{(k)}$ as $k$-th derivative does not cause confusion.
\end{remark}
\begin{remark}
    As $\Gamma(x)=\Gamma(x')=\Gamma(y')$, we can also define the majorant power series of $\Gamma(x)$ as $P_{b}[\Gamma](t)$.
\end{remark}

Here, we only consider $b=0,1$. In fact, $P_{0}[f](t)\ll P_{1}[f](t)$ for any $f$. If $\partial_{n}f=0$, then $P_{0}[f](t)=P_{1}[f](t)$.

\begin{definition}
Given two power series $f(t)=\sum_{n\geq0}a_{n}t^{n}$ and $g(t)=\sum_{n\geq0}b_{n}t^{n}$ with $b_{n}\in\mathbb{R}\bigcup\{\pm\infty\}$. We say $f$ is majorized by $g$, denoted as $f\ll g$, if $|a_{n}|\leq b_{n}$ for all $n\geq0$.
\end{definition}

The power series $P_{b}[f](t)$ encodes the regularity of $f$ in tangential direction. In fact, if there exists $\delta>0$ such that $P_{b}[f](t)$ is convergent when $|t|\leq\delta$, then $f$ is analytic in tangential direction at $0$, with convergence radius at least $\delta$.

The following Lemma \ref{linearrule} and \ref{composition} provide basic computational rules of the majorant power series. The $C^{\alpha}$ product rule
\begin{equation*}
    [f\cdot g]_{C^{\alpha}(S)}\leq[f]_{L^{\infty}(S)}[g]_{C^{\alpha}(S)}+[f]_{C^{\alpha}(S)}[g]_{L^{\infty}(S)}
\end{equation*}
will be used several times during the proof.

\begin{lemma}\label{linearrule}
Here are some basic properties of majorant power series.
\begin{itemize}
    \item[(a)] (linear) For $f$ and $g$ defined in $B_{1}^{+}$, $P_{b}[f\pm g](t)\ll P_{b}[f](t)+P_{b}[g](t)$.
    \item[(b)] (product) For $f$ and $g$ defined in $B_{1}^{+}$, $P_{b}[f\cdot g](t)\ll P_{b}[f](t)\cdot P_{b}[g](t)$.
    \item[(c)] (integration) Let $f$ be defined on $B_{1}^{+}$ and $g$ be a power series. If $P_{b}[\nabla f](t)\ll g(t)$, then $P_{b}[f](t)\ll t\cdot g(t)+C$ for some $C=C(f,g)$. If in particular $f(0)=0$, then $C\leq g(0)$.
\end{itemize}
\end{lemma}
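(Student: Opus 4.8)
The plan is to prove each of the three rules directly from the definition \eqref{power series}, reducing everything to pointwise estimates on the coefficients $[f]^{*,k}_{C_b^{k,\alpha}}$ at a fixed point $X\in B_1^+$ with $\Delta = 1-|X|$, and on the $C^\alpha$-product rule stated just above the lemma.

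\textbf{Part (a), linearity.} This is the easiest: for any multi-index $\beta$ with $|\beta|=k$ and $\beta_n\le b$, we have $D^\beta(f\pm g) = D^\beta f \pm D^\beta g$, so on each ball $B_{\Delta/(k+1)}^+(X)$ the $L^\infty$ and $C^\alpha$ seminorms satisfy the triangle inequality. Taking the weighted maximum over $\beta$ and the supremum over $X$ gives $[f\pm g]^{*,k}_{C_b^{k,\alpha}}\le [f]^{*,k}_{C_b^{k,\alpha}} + [g]^{*,k}_{C_b^{k,\alpha}}$ for every $k$, which is exactly the coefficientwise inequality defining $P_b[f\pm g](t)\ll P_b[f](t)+P_b[g](t)$.

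\textbf{Part (b), product.} Here I would fix $X$ and $\Delta$, fix $\beta$ with $|\beta|=k$, $\beta_n\le b$, and expand $D^\beta(fg) = \sum_{\gamma\le\beta}\binom{\beta}{\gamma}D^\gamma f\, D^{\beta-\gamma}g$ by the Leibniz rule. Each term $D^\gamma f\, D^{\beta-\gamma}g$ has $|\gamma| = j \le k$ and $|\beta-\gamma| = k-j$, with $\gamma_n\le b$ and $(\beta-\gamma)_n\le b$, so the relevant seminorms of the two factors are controlled, after inserting the weights $\Delta^j$ and $\Delta^{k-j}$, by $[f]^{*,j}_{C_b^{j,\alpha}}$ and $[g]^{*,k-j}_{C_b^{k-j,\alpha}}$ respectively — but one must be careful that these are defined on the ball of radius $\Delta/(j+1)$ rather than $\Delta/(k+1)$; since $\Delta/(k+1)\le\Delta/(j+1)$ the smaller ball is contained in the larger, so the seminorm on the smaller ball is dominated. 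The $C^\alpha$-seminorm of a product of two factors is then handled by the displayed $C^\alpha$-product rule, distributing a $C^\alpha$ onto one factor and an $L^\infty$ onto the other; in both resulting terms the $L^\infty$-factor is itself bounded by the corresponding starred norm (the $L^\infty$-part of it). Summing over $\gamma$ and using $\sum_{\gamma\le\beta}\binom{\beta}{\gamma} = $ (the multinomial expansion) together with $\frac{1}{k!}\binom{\beta}{\gamma}$ organizing correctly into the Cauchy product $\frac{1}{j!}\cdot\frac{1}{(k-j)!}$ gives $[fg]^{*,k}_{C_b^{k,\alpha}}\le \sum_{j=0}^k \binom{k}{j}[f]^{*,j}_{C_b^{j,\alpha}}[g]^{*,k-j}_{C_b^{k-j,\alpha}}$, which upon dividing by $k!$ is precisely the coefficient of $t^k$ in $P_b[f](t)\cdot P_b[g](t)$.

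\textbf{Part (c), integration.} For $k\ge 1$, any $\beta$ with $|\beta|=k$ can be written $\beta = \beta' + e_i$ for some $i$, so $D^\beta f = D^{\beta'}(\partial_i f)$ with $|\beta'| = k-1$; the point is that we may always choose $i$ so that $\beta'_n\le b$ still holds (if $\beta_n\le b$, pick any $i$ with $\beta_i\ge 1$, preferring a tangential $i$ unless $\beta = \beta_n e_n$), and $\partial_i f$ is one of the components of $\nabla f$. Hence $[f]^{*,k}_{C_b^{k,\alpha}}$ is controlled by $[\nabla f]^{*,k-1}_{C_b^{k-1,\alpha}}$, and by hypothesis $P_b[\nabla f](t)\ll g(t)$ means this is $\le g^{(k-1)}/(k-1)!$ in the notation of \eqref{power series}. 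Dividing by $k!$, the coefficient of $t^k$ in $P_b[f]$ is $\le \frac{1}{k}\cdot(\text{coeff. of }t^{k-1}\text{ in }g) = $ coeff. of $t^k$ in $t\cdot g(t)$. The only coefficient not covered is $k=0$: $[f]^{*,0}_{C_0^{0,\alpha}}$ is just a $C^\alpha$-norm of $f$ itself, which is a genuinely new quantity not controlled by derivatives, so we absorb it into the constant $C = C(f,g)$. When $f(0)=0$ one uses the fundamental theorem of calculus along segments from $0$ (which lie in $B_1^+$) to bound $|f(x)|\le |x|\,\sup|\nabla f|$ and the $C^\alpha$-seminorm of $f$ by that of $\nabla f$ times a bounded factor, all of which is $\le g(0)$; this is the only place a small computation is needed.

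The main obstacle, and the only step requiring real care rather than bookkeeping, is matching the ball radii in part (b): the definition evaluates the order-$k$ seminorm on $B_{\Delta/(k+1)}^+(X)$, but when a Leibniz term splits the derivative count as $j + (k-j)$ the natural bounds live on the larger balls $B_{\Delta/(j+1)}^+(X)$ and $B_{\Delta/(k-j+1)}^+(X)$. Since $k+1 \ge j+1$ and $k+1\ge k-j+1$, containment goes the right way and the restriction of a seminorm to a smaller ball only decreases it, so the argument closes — but this monotonicity is exactly why the radius is chosen to shrink with the derivative order, and it should be pointed out explicitly.
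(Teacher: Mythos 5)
Your proof is correct and follows essentially the same route as the paper's: part (a) by linearity of differentiation, part (b) by the Leibniz rule combined with the $C^\alpha$-product rule and the crucial observation that the shrinking ball radius $\Delta/(k+1)$ is monotone in $k$ so restriction only decreases the seminorms, and part (c) by writing $D^\beta f = D^{\beta'}\partial_i f$ and using $P_b[f]^{(k)} \le P_b[\nabla f]^{(k-1)}$ together with $\int g \ll t\cdot g(t)$. Your explicit treatment of the $f(0)=0$ case and the remark on matching ball radii are slightly more detailed than the paper's exposition, but the underlying argument is identical.
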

\begin{proof}
    Part (a) is obvious.
    \begin{itemize}
        \item[(b)] For every $X\in B_{1}^{+}$, we bound its $C^{k}$ semi-norm by
        \begin{align*}
            \Delta^{k}[fg]_{C^{k}(B_{\frac{\Delta}{k+1}}(X))}\leq&\Delta^{k}\sum_{l=0}^{k}\tbinom{k}{l}[f]_{C^{l}(B_{\frac{\Delta}{k+1}})}[g]_{C^{k-l}(B_{\frac{\Delta}{k+1}})}\notag\\
            \leq&\sum_{l=0}^{k}\tbinom{k}{l}\Delta^{l}[f]_{C^{l}(B_{\frac{\Delta}{l+1}})}\Delta^{k-l}[g]_{C^{k-l}(B_{\frac{\Delta}{k-l+1}})}
        \end{align*}
        and bound its $C^{k,\alpha}$ semi-norm by
        \begin{align*}
            \Delta^{k+\alpha}[fg]_{C^{k,\alpha}(B_{\frac{\Delta}{k+1}}(X))}\leq&\Delta^{k+\alpha}\sum_{l=0}^{k}\tbinom{k}{l}\Big\{[f]_{C^{l,\alpha}(B_{\frac{\Delta}{k+1}})}[g]_{C^{k-l}(B_{\frac{\Delta}{k+1}})}\\
            &+[f]_{C^{l}(B_{\frac{\Delta}{k+1}})}[g]_{C^{k-l,\alpha}(B_{\frac{\Delta}{k+1}})}\Big\}\notag\\
            \leq&\sum_{l=0}^{k}\tbinom{k}{l}\Big\{\Delta^{l+\alpha}[f]_{C^{l,\alpha}(B_{\frac{\Delta}{l+1}})}\Delta^{k-l}[g]_{C^{k-l}(B_{\frac{\Delta}{k-l+1}})}\\
            &+\Delta^{l}[f]_{C^{l}(B_{\frac{\Delta}{l+1}})}\Delta^{k-l+\alpha}[g]_{C^{k-l,\alpha}(B_{\frac{\Delta}{k-l+1}})}\Big\}.
        \end{align*}
        By adding these two inequalities together, we have
        \begin{align*}
            &\Delta^{k+\alpha}[fg]_{C^{k,\alpha}(B_{\frac{\Delta}{k+1}}(X))}+\Delta^{k+\alpha}[fg]_{C^{k,\alpha}(B_{\frac{\Delta}{k+1}}(X))}\\
            \leq&\sum_{l=0}^{k}\tbinom{k}{l}\Big\{\Delta^{l}[f]_{C^{l}(B_{\frac{\Delta}{l+1}})}+\Delta^{l+\alpha}[f]_{C^{l,\alpha}(B_{\frac{\Delta}{l+1}})}\Big\}\\
            &\cdot\Big\{\Delta^{k-l}[g]_{C^{k-l}(B_{\frac{\Delta}{k-l+1}})}+\Delta^{k-l+\alpha}[g]_{C^{k-l,\alpha}(B_{\frac{\Delta}{k-l+1}})}\Big\}\\
            \leq&\sum_{l=0}^{k}\tbinom{k}{l}P[f]^{(l)}P[g]^{(k-l)}=(P[f]\cdot P[g])^{(k)}.
        \end{align*}
        \item[(c)] Let $X\in B_{1}^{+}$, then $\Delta^{k}[D^{k}f]_{C^{0}(B^{+}_{\frac{\Delta}{k+1}}(X))}\leq\Delta^{k-1}[D^{k-1}\nabla f]_{C^{0}(B^{+}_{\frac{\Delta}{k}}(X))}$, and similar inequality holds for $C^{0,\alpha}$-global norm. This means for $k\geq1$ we have $P_{b}[f]^{(k)}\leq P_{b}[\nabla f]^{(k-1)}$, so $\ds P_{b}[f](t)\ll\int g+C$. Besides, it's easy to check $\ds\int g\ll t\cdot g(t)$ when $g(t)$ is a positive power series.
    \end{itemize}
\end{proof}
\begin{lemma}\label{composition}
    Let $f$ be defined on $B_{1}^{+}$ and $g$ is a tensor-value function defined on the range $\mathcal{R}$ of $f$. If $[f]_{C^{1}(B_{1})}\leq2$, then their composition satisfies $P_{b}[g\circ f]\ll2\bar{g}\circ P_{b}[f]$, where $\ds\bar{g}\gg\sum_{k=0}^{\infty}\|D^{k}g\|_{C^{\alpha}(\mathcal{R})}\frac{t^{k}}{k!}$ is some scalar-value power series.
\end{lemma}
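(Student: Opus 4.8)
The plan is to estimate, for each fixed center $X\in B_1^+$ with $\Delta = 1-|X|$, the $C^{k,\alpha}$ seminorm of the composition $g\circ f$ on the ball $B_{\Delta/(k+1)}^+(X)$, and then repackage these estimates into the majorant power series. First I would apply the Fa\`a di Bruno formula to $D^\beta(g\circ f)$ for a multi-index $\beta$ with $|\beta| = k$: each term is a product of one factor $D^m g$ (evaluated at $f$) with $m \le k$, times a product of derivatives of $f$ whose orders $|\gamma_1|,\dots,|\gamma_m|$ sum to $k$. Since $[f]_{C^1}\le 2$, the image $f(B_{\Delta/(k+1)}^+(X))$ sits in (a fixed neighborhood of) $\mathcal{R}$, so $\|D^m g(f(\cdot))\|_{L^\infty} \le \|D^m g\|_{L^\infty(\mathcal R)}$, and composing with a $C^1$-bounded map costs at most a factor $2$ in the $C^\alpha$ seminorm via the chain rule and the $C^\alpha$ product rule quoted before Lemma~\ref{linearrule}.

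Next I would multiply through by $\Delta^k$ (resp. $\Delta^{k+\alpha}$) and distribute the powers of $\Delta$ among the factors exactly as in the proof of Lemma~\ref{linearrule}(b): write $\Delta^k = \Delta^{|\gamma_1|}\cdots\Delta^{|\gamma_m|}$ and absorb each $\Delta^{|\gamma_i|}[D^{\gamma_i}f]$ into $P_b[f]^{(|\gamma_i|)}$ (using the monotonicity of the global norms in the ball radius, i.e. $B_{\Delta/(k+1)}^+ \subseteq B_{\Delta/(|\gamma_i|+1)}^+$). When the $\alpha$-Hölder derivative falls on the $D^m g$ factor, the extra $\Delta^\alpha$ pairs with the $C^\alpha$ seminorm of $g\circ f$, which by the chain rule is bounded by $\|D^{m+1}g\|_{L^\infty}\cdot[f]_{C^\alpha}\le 2\|D^{m+1}g\|_{C^\alpha(\mathcal R)}$ — this is exactly why $\bar g$ is taken with the $C^\alpha$ norms of $D^k g$ rather than just $L^\infty$ norms. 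Collecting terms, the $k$-th majorant coefficient of $g\circ f$ is bounded by $2$ times the sum over $m$ and over ordered partitions of $k$ into $m$ positive parts of $\|D^m g\|_{C^\alpha(\mathcal R)}$ times $\prod P_b[f]^{(|\gamma_i|)}$ — with the same combinatorial weights that appear when one expands $\bar g(P_b[f](t))$ and reads off the coefficient of $t^k/k!$. Hence $P_b[g\circ f](t)\ll 2\,\bar g(P_b[f](t))$.

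The main obstacle is bookkeeping the radii of the balls and the combinatorial factors simultaneously: in Fa\`a di Bruno the factor $D^{\gamma_i}f$ is naturally controlled on $B_{\Delta/(k+1)}^+(X)$, a \emph{smaller} ball than the $B_{\Delta/(|\gamma_i|+1)}^+(X)$ built into $P_b[f]^{(|\gamma_i|)}$, so one must check the inclusion goes the right way (it does, since $|\gamma_i|\le k$) and that passing to the smaller ball only decreases seminorms. One also needs the constant-$f(0)$ issue and the $b$-index to behave: since $f$ here will be a tangential quantity (or $\Gamma$), at most one normal derivative ever appears, so every $D^{\gamma_i}$ with $\gamma_{i,n}\le b$ keeps $\beta_n\le b$ through the chain rule, and the definition of $P_b$ is consistent. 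Finally, one verifies that the factor-of-$2$ from the composition rule for $C^\alpha$ of $g\circ f$ does not compound across the $m$ factors — it does not, because the $C^1$-bound on $f$ is used only once, on the single $D^m g$ factor, while the $D^{\gamma_i}f$ factors are themselves genuine derivatives of $f$ and need no composition correction.
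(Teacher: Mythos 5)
Your proposal is correct and follows essentially the same route as the paper: reduce to a fixed center $X$, apply Fa\`a di Bruno, distribute the weight $\Delta^k$ among the factors $D^{\gamma_i}f$ and absorb each into $P_b[f]^{(|\gamma_i|)}$, use the $C^\alpha$ product rule to handle the H\"older seminorm, and recognize the resulting combinatorial sum as the $k$-th coefficient of $\bar g\circ P_b[f]$. The only cosmetic difference is in your bound for $\Delta^\alpha[D^m g(f)]_{C^\alpha}$, where you invoke $\|D^{m+1}g\|_{L^\infty}[f]_{C^\alpha}$ while the paper uses $[D^m g]_{C^\alpha}[f]_{C^1}$; both are valid, and both sit inside the coefficient $\|D^k g\|_{C^\alpha(\mathcal R)}$ of $\bar g$, so the index shift you introduce still lands inside the majorant. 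Your closing checklist (radius monotonicity, the $b$-index compatibility under Fa\`a di Bruno, and that the factor $2$ from $[f]_{C^1}\le2$ appears only once) correctly identifies and resolves the delicate points the paper's proof treats implicitly.
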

\begin{proof}
    For any $\varepsilon>0$ (we'll finally let $\varepsilon\searrow0$), there is $X\in B_{1}^{+}$ so that
        \begin{equation*}
            \frac{P_{b}[g\circ f]}{1+\varepsilon}\leq\Delta^{k}[D^{k}(g\circ f)]_{L^{\infty}(B_{\frac{\Delta}{k+1}}^{+}(X))}+\Delta^{k+\alpha}[D^{k}(g\circ f)]_{C^{\alpha}(B_{\frac{\Delta}{k+1}}^{+}(X))}.
        \end{equation*}
        We first consider the first derivative case.
        \begin{align*}
            \frac{P_{b}[g\circ f]}{1+\varepsilon}\leq&\Delta[\nabla g(f)\cdot\nabla f]_{L^{\infty}(B_{\Delta/2}^{+}(X))}+\Delta^{1+\alpha}[\nabla g(f)\cdot\nabla f]_{C^{\alpha}(B_{\Delta/2}^{+}(X))}\notag\\
            \leq&\Delta[g]_{C^{1}}[f]_{C^{1}}+\Delta^{1+\alpha}\{[g]_{C^{1}}[f]_{C^{1,\alpha}}+[g]_{C^{1,\alpha}}[f]_{C^{1}}[f]_{C^{1}}\}\notag\\
            \leq&[g]_{C^{1}}\Delta\{[f]_{C^{1}}+\Delta^{\alpha}[f]_{C^{1,\alpha}}\}+([g]_{C^{1,\alpha}}\Delta[f]_{C^{1}})\cdot[f]_{C^{1}}\notag\\
            \leq&[g]_{C^{1}}P_{b}[f]^{(1)}+([g]_{C^{1,\alpha}}P_{b}[f]^{(1)})\cdot2\leq2\bar{g}'P_{b}[f]^{(1)}.
        \end{align*}
        More generally, higher order chain rule says that there are polynomials $p^{i}$ generated by higher derivatives $Df,D^{2}f,\cdots$ so that
        \begin{equation*}
            D^{k}(g\circ f)=\sum_{i=1}^{k}g^{(i)}(f)\cdot p^{i}.
        \end{equation*}
        From this, we have in $B_{\Delta/2}^{+}(X)$,
        \begin{align*}
            \Delta^{k}[D^{k}(g\circ f)]_{C^{0}}\leq&\sum_{i=1}^{k}\|g^{(i)}\|_{C^{0}}\cdot\Delta^{k-i}\|p^{i}\|_{C^{0}}\notag\\
            \leq&\sum_{i=1}^{k}\|g^{(i)}\|_{C^{0}}\cdot|p^{(i)}(\Delta\|Df\|_{C^{0}},\Delta^{2}\|D^{2}f\|_{C^{0}},\cdots)|\notag\\
            \leq&\sum_{i=1}^{k}\|g^{(i)}\|_{C^{0}}\cdot[p^{(i)}(P[f]^{(1)},P[f]^{(2)},\cdots)]_{C^{0}},
        \end{align*}
        and
        \begin{align*}
            \Delta^{k+\alpha}[D^{k}(g\circ f)]_{C^{\alpha}}\leq&\sum_{i=1}^{k}\|g^{(i)}\|_{C^{0}}\cdot\Delta^{k-i+\alpha}[p^{i}(P[f]^{(1)},P[f]^{(2)},\cdots)]_{C^{\alpha}}\notag\\
            &+\sum_{i=1}^{k}\Delta^{\alpha}[g^{(i)}(f)]_{C^{\alpha}}\cdot\Delta^{k-i}[p^{i}(P[f]^{(1)},P[f]^{(2)},\cdots)]_{C^{0}}.
        \end{align*}
        We similarly bound $\Delta^{\alpha}[g^{(i)}(f)]_{C^{\alpha}}$ by $[g^{(i)}]_{C^{\alpha}}[f]_{C^{1}}\leq2[g^{(i)}]_{C^{\alpha}}$ and computation afterwards is the same as first derivative case.
\end{proof}

\section{Application to regular obstacle problem}

\subsection{Relation to boundary Harnack}
Let $U\geq0$ be a solution of \eqref{U in y abuse} which is better written as
\begin{equation}\label{U in y}
    \partial_{y_{p}}(b^{pq}\partial_{y_{q}}U)=\chi_{\{U>0\}},
\end{equation}
and let $\Gamma$ be the free boundary passing through the origin. As mentioned in the introduction, if $0$ is a regular boundary point, then $y_{n}=\Gamma(y')$ is a Lipschitz graph and boundary points on $\Gamma$ are all regular near $0$. In fact, one can also show that $\Gamma$ is locally $C^{1}$, meaning that after a rotation, $[\Gamma]_{C^{0,1}}\leq\delta$ near the origin for some small $\delta$.

If $b^{pq}=\delta^{pq}$, or equivalently $\Delta U=\chi_{\{U>0\}}$, then classical boundary Harnack ensures that $\ds\frac{U_{y_{i}}}{U_{y_{n}}},\nabla_{y'}\Gamma\in C^{\alpha}$ in $y$-coordinate. More generally, if $b^{pq}\in C^{1}$, then
\begin{equation*}
    \partial_{y_{p}}(b^{pq}\partial_{y_{q}}U_{y_{k}})=-\partial_{y_{p}}(U_{y_{q}}\partial_{y_{k}}b^{pq})=:div(\vec{f}_{k}).
\end{equation*}
in $\{y_{n}>\Gamma(y')\}$. Here, $\vec{f}_{k}$ vanishes at $\Gamma$ and $[\vec{f}]_{C^{\alpha}}$ is small. For such an equation with right-hand-side, one can use the method given by Ros-Oton and Torres-Latorre \cite{RHS} to show that if $[\Gamma]_{C^{0,1}},[\vec{f}_{k}]_{C^{\alpha}}\leq\delta$ for some small $\delta$, then $\ds\frac{U_{y_{i}}}{U_{y_{n}}}\in C^{\alpha}$.

Now let's assume $\Gamma$ is a $C^{1,\alpha}$ graph and $|\nabla_{y'}\Gamma|\leq1$ near $0$. Let's denote
\begin{equation}
    u_{i}:=U_{y_{i}},\quad w_{i}:=\frac{U_{y_{i}}}{U_{y_{n}}},\quad u:=(u_{1},\cdots,u_{n}),\quad w:=(w_{1},\cdots,w_{n}).
\end{equation}
In the region $\{y_{n}>\Gamma(y')\}$, what $u_{k}$ satisfies in $y$-coordinate is
\begin{equation*}
    \partial_{y_{p}}(b^{pq}\partial_{y_{q}}u_{k})=-\partial_{y_{p}}(u_{q}\partial_{y_{k}}b^{pq}),
\end{equation*}
so by Lemma \ref{straightening change} with $f_{p}=-u_{q}\partial_{y_{k}}b^{pq}$, we know that $u_{k}$ satisfies
\begin{equation}\label{equationu}
    \partial_{x_{i}}(a^{ij}\partial_{x_{j}}u_{k})+\partial_{x_{i}}(\frac{\partial x_{i}}{\partial y_{p}}\partial_{y_{k}}b^{pq}\cdot u_{q})=0,\quad a^{ij}=b^{pq}\frac{\partial x_{i}}{\partial y_{p}}\frac{\partial x_{j}}{\partial y_{q}}.
\end{equation}
Here, $\ds\det(\frac{\partial y}{\partial x})=1$ because vertical coordinate change is volume-preserving. The ratio $\ds w_{k}=\frac{u_{k}}{u_{n}}$, by Lemma \ref{ratio}, satisfies
\begin{align}\label{equationw}
    \partial_{x_{i}}(u_{n}^{2}a^{ij}\partial_{x_{j}}w_{k})=&\partial_{x_{i}}(u_{k}u_{q}\frac{\partial x_{i}}{\partial y_{p}}\partial_{y_{n}}b^{pq})-\partial_{x_{i}}(u_{n}u_{q}\frac{\partial x_{i}}{\partial y_{p}}\partial_{y_{k}}b^{pq})\notag\\
    &-u_{q}\frac{\partial x_{i}}{\partial y_{p}}\partial_{y_{n}}b^{pq}\partial_{x_{i}}u_{k}+u_{q}\frac{\partial x_{i}}{\partial y_{p}}\partial_{y_{k}}b^{pq}\partial_{x_{i}}u_{n}.
\end{align}

By implicit function theorem, the ratio $\ds w_{i}=\frac{U_{y_{i}}}{U_{y_{n}}}$ represents the slope of $U$'s level set, so $\ds\partial_{y_{k}}\Gamma=\lim_{y_{n}\searrow\Gamma(y')}w_{k}$.
A naive application of Theorem \ref{HOBH straight} can inductively imply $\Gamma\in C^{\infty}(B_{1/2})$. Now let's be more careful in order to prove $\Gamma\in C^{\omega}$.

\subsection{A PDE system}
The expression \eqref{equationu} and \eqref{equationw} depend on $u$, $w$, $a^{ij}$, $\ds\frac{\partial x}{\partial y}$, $\nabla_{y}B$, which in turn depend on the free boundary $\Gamma$. Two important quantities are $\nabla u$ and $w$, and we define their corresponding power series:
\begin{equation}
    \Pi(t)=P_{0}[\nabla u](t),\quad\Omega(t)=P_{1}[w](t).
\end{equation}

Next we write \eqref{equationu} and \eqref{equationw} in a more convenient form. Let's write
\begin{equation}
    \tilde{A}=(\frac{u_{n}}{x_{n}})^{2}A,
\end{equation}
which is uniformly elliptic by Hopf lemma, and
\begin{align}
    \vec{F}=&(\frac{u_{k}}{x_{n}}\frac{u_{q}}{x_{n}}\frac{\partial x_{i}}{\partial y_{p}}\partial_{y_{n}}b^{pq}-\frac{u_{n}}{x_{n}}\frac{u_{q}}{x_{n}}\frac{\partial x_{i}}{\partial y_{p}}\partial_{y_{k}}b^{pq})e_{i},\\
    G=&\frac{u_{q}}{x_{n}}\frac{\partial x_{i}}{\partial y_{p}}\partial_{y_{k}}b^{pq}\partial_{x_{i}}u_{n}-\frac{u_{q}}{x_{n}}\frac{\partial x_{i}}{\partial y_{p}}\partial_{y_{n}}b^{pq}\partial_{x_{i}}u_{k},\\
    \vec{H}=&\frac{\partial x_{i}}{\partial y_{p}}\partial_{y_{k}}b^{pq}\cdot u_{q}e_{i}.
\end{align}
With these notations, $A,\tilde{A},\vec{F},G,\vec{H}$ are polynomials in the variables
\begin{equation*}
    \Big(B,\nabla_{y}B,\frac{\partial x}{\partial y},\nabla u,\frac{u}{x_{n}}\Big),
\end{equation*}
and \eqref{equationw} and \eqref{equationu} become the following PDE system:
\begin{align}
    div(x_{n}^{2}\widetilde{A}\cdot\nabla w)=&div(x_{n}^{2}\vec{F})+x_{n}G,\label{system}\\
    div(A\cdot\nabla u)=&div(\vec{H}).\label{system2}
\end{align}
The following lemma describes how the coefficients $A,\tilde{A},\vec{F},G,\vec{H}$ are dominated by $\Pi(t)$ and $\Omega(t)$.
\begin{lemma}\label{AAFGH}
Assume that $|\nabla\Gamma|\leq1$ and the matrix $B$ is analytic in $y$-coordinate with large convergence radius. There exist analytic functions
\begin{equation*}
\mathcal{A},\widetilde{\mathcal{A}},\mathcal{F},\mathcal{G},\mathcal{H}
\end{equation*}
in the variables $(t,\Pi,\Omega)$ with large convergence radius so that
\begin{equation*}
    P_{0}[A]\ll\mathcal{A}(t,\Pi(t),\Omega(t)),\quad P_{0}[\widetilde{A}]\ll\widetilde{\mathcal{A}}(t,\Pi(t),\Omega(t)),\quad\mbox{etc}.
\end{equation*}
\end{lemma}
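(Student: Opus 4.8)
The plan is to show that each of the quantities $A,\widetilde{A},\vec{F},G,\vec{H}$ is a (universal, fixed) polynomial in the building blocks
\begin{equation*}
    \Big(B(y(x)),\ \nabla_y B(y(x)),\ \tfrac{\partial x}{\partial y},\ \nabla u,\ \tfrac{u}{x_n}\Big),
\end{equation*}
and then to majorize the majorant power series of each building block by an analytic function of $(t,\Pi,\Omega)$, finally combining these using the linear, product and composition rules of Lemma \ref{linearrule} and Lemma \ref{composition}. Since a polynomial combination of series that are each $\ll$ an analytic function of $(t,\Pi,\Omega)$ is again $\ll$ an analytic function of $(t,\Pi,\Omega)$ (with possibly larger, but still positive, convergence radius), it suffices to produce the majorants for the five building blocks separately.

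First I would handle the two blocks coming from $u$. By definition $\Pi(t)=P_0[\nabla u](t)$, so $P_0[\nabla u]\ll\Pi(t)$ immediately. For $\tfrac{u}{x_n}$, note that $u$ vanishes on $\{x_n=0\}$, so $\tfrac{u}{x_n}=\int_0^1 (\partial_n u)(x',s x_n)\,ds$; the global norm of this averaging operator is controlled by that of $\partial_n u$, hence $P_0[\tfrac{u}{x_n}]\ll C\,\Pi(t)$ for a universal constant. (Alternatively one can invoke Lemma \ref{linearrule}(c): since $u(x',0)=0$, the "integration rule" gives $P_0[u]\ll t\,\Pi(t)+\Pi(0)$, and dividing by $x_n$ costs only a shift of indices.) Next, the coordinate-change blocks $\tfrac{\partial x}{\partial y}$ and $\nabla_y B$ depend only on $\Gamma$ (through $y(x)=x+\Gamma(x)e_n$) and on the fixed analytic matrix $B$. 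Since $\partial_i\Gamma=\lim_{x_n\to 0}w_i$ and, more to the point, $\nabla_{x'}\Gamma$ is a component of the boundary trace of $w$, the regularity of $\Gamma$ is controlled by that of $w$: concretely $P_b[\nabla\Gamma]\ll\Omega(t)$ up to a universal constant (using that $\partial_n\Gamma=0$, so $P_0=P_1$ here), and then $P_b[\Gamma]\ll t\,\Omega(t)+C$ by Lemma \ref{linearrule}(c) with $\Gamma(0)=0$. Because $\tfrac{\partial x}{\partial y}$ and $y(x)$ are explicit analytic (indeed rational) functions of $\nabla_{x'}\Gamma$ and $\Gamma$ with $|\nabla\Gamma|\le 1$, applying the composition Lemma \ref{composition} yields $P_0[\tfrac{\partial x}{\partial y}]\ll \Phi(t,\Omega(t))$ and $P_0[y]\ll \Psi(t,\Omega(t))$ for analytic $\Phi,\Psi$ with positive convergence radius.

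Having $P_0[y]\ll\Psi(t,\Omega(t))$, the block $\nabla_y B(y(x))$ is the composition of the fixed analytic function $\nabla_y B$ with $y(x)$; since $B$ has large convergence radius and $[y]_{C^1}$ is bounded (by $1+|\nabla\Gamma|\le 2$), Lemma \ref{composition} gives $P_0[\nabla_y B(y(\cdot))]\ll 2\overline{\nabla_y B}\circ\Psi(t,\Omega(t))$, again analytic in $(t,\Omega)$, hence in $(t,\Pi,\Omega)$; similarly for $B(y(\cdot))$ itself. Collecting all five majorants and substituting them into the fixed polynomial expressions defining $A,\widetilde{A},\vec{F},G,\vec{H}$, and invoking Lemma \ref{linearrule}(a),(b), produces the desired analytic functions $\mathcal{A},\widetilde{\mathcal{A}},\mathcal{F},\mathcal{G},\mathcal{H}$ in $(t,\Pi,\Omega)$; their convergence radii are positive because a finite composition/product of analytic functions with positive convergence radius has positive convergence radius, and the only "denominators" that appear — namely $u_n/x_n$ in $\widetilde{A}=(u_n/x_n)^2 A$ — are bounded below by Hopf's lemma, so $1/(u_n/x_n)$ is itself controlled by an analytic function of $\Pi$.

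I expect the main obstacle to be the bookkeeping for the blocks involving division, specifically $\tfrac{u}{x_n}$ and the reciprocal $\tfrac{x_n}{u_n}$ hidden in $\widetilde{A}$ and in $\vec{F},G$. One must verify that dividing by $x_n$ (for a function vanishing on the boundary) and inverting $u_n/x_n$ (bounded below) are operations compatible with the global norms $[\cdot]^{*,l}_{C^{k,\alpha}_b}$ — i.e. that they do not destroy the "one extra normal derivative" budget $b\le 1$ and that they interact correctly with the $\Delta^l$ growth weights. The key points are: (i) the averaging representation $\tfrac{u}{x_n}=\int_0^1(\partial_n u)(x',sx_n)\,ds$ only differentiates $u$ once in the normal direction, keeping us within $b=1$; and (ii) $t\mapsto 1/t$ is analytic away from $0$, so the reciprocal of a series majorized by an analytic function bounded below is again majorized by an analytic function, via Lemma \ref{composition}. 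Once these two reductions are in place, everything else is an application of the three computational rules already established.
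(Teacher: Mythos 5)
Your proposal is correct and follows essentially the same route as the paper, which factors the argument into Lemma \ref{geometry} (controlling $\tfrac{\partial x}{\partial y}$, $y(x)$, and the pulled-back $B,\nabla_y B$ in terms of $\Gamma,\nabla\Gamma$) and Lemma \ref{reducegamma} (controlling $\Gamma,\nabla\Gamma,\tfrac{u}{x_n}$ in terms of $\Omega,\Pi$), combined via the linear/product/composition rules of Lemmas \ref{linearrule} and \ref{composition}. One small misreading in your discussion of ``denominators'': $\widetilde{A}=(u_n/x_n)^2 A$ carries $(u_n/x_n)^2$ as a multiplicative factor, not as a denominator, and in fact none of $A,\widetilde{A},\vec{F},G,\vec{H}$ involve the reciprocal $x_n/u_n$ --- they are genuine polynomials in the five building blocks --- so your point (ii) about inverting $u_n/x_n$ via Lemma \ref{composition} is not needed here (Hopf's lemma is used elsewhere, to guarantee uniform ellipticity of $\widetilde{A}$ for the Schauder step, not to majorize a reciprocal in this lemma).
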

\begin{proof}
    The majorant inequalities follow from Lemma \ref{geometry} and \ref{reducegamma} below. Lemma \ref{geometry} yields that $A,\tilde{A},\vec{F},G,\vec{H}$ are analytic functions in the variables
\begin{equation*}
    \Big(x,\Gamma,\nabla\Gamma,\nabla u,\frac{u}{x_{n}}\Big).
\end{equation*}
Lemma \ref{reducegamma} implies that $P_{0}[\Gamma]$,$P_{0}[\nabla\Gamma]$ are controlled by $\Omega(t)$, and $\ds P_{0}[\frac{u}{x_{n}}]$ is controlled by $\Pi(t)$.
\end{proof}

\begin{lemma}\label{geometry}
Assume that $B=(b^{pq})$ is analytic in $y$-coordinate with large convergence radius, and that $\Gamma$ is passing through $0$ with $|\nabla_{y'}\Gamma|\leq1$, then after the $x-y$ coordinate change,
\begin{itemize}
    \item[(a)] $P_{1}[y(x)]\ll2+t+P_{0}[\Gamma]$ and $\ds P_{1}[\frac{\partial y}{\partial x}],P_{1}[\frac{\partial x}{\partial y}]\ll1+P_{0}[\nabla\Gamma]$.
    \item[(b)] If we treat $B(x)=B(y(x))$ and $\nabla_{y}B(x)=\nabla_{y}B(y(x))$ as functions in $x$-coordinate, then there is some large $R$ so that
    \begin{equation}
        P_{1}[B],P_{1}[\nabla_{y}B]\ll\frac{C}{R-t-P_{0}[\Gamma]}.
    \end{equation}
    \item[(c)] For $\ds a^{ij}=b^{pq}\frac{\partial x_{i}}{\partial y_{p}}\frac{\partial x_{j}}{\partial y_{q}}$, there is some large $R$ so that
    \begin{equation}
        P_{0}[A]\ll C\frac{(1+P_{0}[\nabla\Gamma])^{2}}{R-t-P_{0}[\Gamma]}.
    \end{equation}
\end{itemize}
\end{lemma}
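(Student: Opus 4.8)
\textbf{Proof proposal for Lemma \ref{geometry}.}

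The plan is to treat the three items in order, since (b) and (c) build on the geometric estimates in (a). For part (a), the key object is the coordinate change $y(x) = x + \Gamma(x)e_n$ from \eqref{xtoy}. Writing this componentwise, $y_j(x) = x_j$ for $j<n$ and $y_n(x) = x_n + \Gamma(x')$, so $y(x)$ differs from the identity map only by the vertical shift $\Gamma(x')e_n$. The identity map contributes $x$, whose global norm (with the $l=k$ normalization) I would bound termwise: the $C^0$ part gives at most the diameter, contributing to the constant and the linear term, while $\nabla x$ is constant so all higher terms vanish; this accounts for the $2+t$ in $P_1[y(x)]\ll 2+t+P_0[\Gamma]$. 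The remaining term $\Gamma(x')e_n$ is a function that is constant in $x_n$, so $P_1[\Gamma] = P_0[\Gamma]$, and Lemma \ref{linearrule}(a) gives the claimed majorization. For the Jacobians, $\frac{\partial y}{\partial x}$ equals the identity plus the rank-one matrix $e_n\otimes\nabla_{x'}\Gamma$, whose entries are (up to sign) components of $\nabla\Gamma$ or constants, so $P_1[\frac{\partial y}{\partial x}]\ll 1+P_0[\nabla\Gamma]$ again follows from Lemma \ref{linearrule}(a) together with $P_1[\nabla\Gamma]=P_0[\nabla\Gamma]$. The inverse Jacobian $\frac{\partial x}{\partial y}$ is the identity minus $e_n\otimes\nabla_{x'}\Gamma$ (one checks $(\mathrm{Id}+e_n\otimes v)^{-1} = \mathrm{Id}-e_n\otimes v$ since $v\cdot e_n = 0$ here as $\Gamma$ depends only on $x'$), so the same bound holds; here I use that the inverse is an \emph{exact} polynomial in $\nabla\Gamma$, not merely a convergent series, because $\det(\partial y/\partial x)=1$.

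For part (b), I would apply the composition rule Lemma \ref{composition} with $f = y(x)$ and $g = B$ (resp. $g=\nabla_y B$). The hypothesis $|\nabla\Gamma|\le 1$ gives $[y(x)]_{C^1}\le 2$ so Lemma \ref{composition} applies, yielding $P_1[B\circ y]\ll 2\bar B\circ P_1[y]$ where $\bar B(t)\gg\sum_k \|D^k B\|_{C^\alpha}\frac{t^k}{k!}$. Since $B$ is analytic in $y$-coordinate with large convergence radius $R'$, one may take $\bar B(t) = \frac{C}{R'-t}$ (a standard majorant for an analytic function), and substituting $P_1[y]\ll 2+t+P_0[\Gamma]$ from part (a), absorbing the constant $2$ into a redefined radius $R = R'-2$, gives $P_1[B]\ll\frac{C}{R-t-P_0[\Gamma]}$. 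The same argument applies verbatim to $\nabla_y B$, which is also analytic in $y$ with comparable convergence radius.

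For part (c), I would combine (a) and (b) through the product rule. We have $a^{ij} = b^{pq}\frac{\partial x_i}{\partial y_p}\frac{\partial x_j}{\partial y_q}$, a sum of products of one factor $b^{pq}$ and two Jacobian factors. By Lemma \ref{linearrule}(b), $P_0[A]\ll P_0[B]\cdot P_0[\frac{\partial x}{\partial y}]\cdot P_0[\frac{\partial x}{\partial y}]$ (summing over the finitely many index combinations only multiplies by a dimensional constant absorbed into $C$), and since $P_0[\cdot]\ll P_1[\cdot]$ always, this is majorized by $\frac{C}{R-t-P_0[\Gamma]}\cdot(1+P_0[\nabla\Gamma])^2$, which is the assertion. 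I expect the main obstacle to be bookkeeping rather than conceptual: one must verify carefully that the $l=k$ normalization of the global norm is exactly the one that makes the product and composition lemmas apply with the stated (dimension-only) constants, and that replacing $P_1$ by $P_0$ on the left of (c) is legitimate — this is where the remark that $P_0[f]=P_1[f]$ when $\partial_n f=0$ is \emph{not} available (since $A$ does depend on $x_n$ through $\Gamma$), so one genuinely uses the trivial majorization $P_0\ll P_1$ and the fact that the right-hand sides in (a),(b) were already stated with $P_0[\Gamma]$, $P_0[\nabla\Gamma]$.
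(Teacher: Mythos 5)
Your proposal is correct and follows essentially the same route as the paper: decompose $y(x)=x+\Gamma(x')e_n$ and read off the Jacobians for (a), verify $[y]_{C^1}\le 2$ so that Lemma \ref{composition} applies for (b), and combine (a) and (b) through the product rule Lemma \ref{linearrule}(b) for (c). The paper's own proof is extremely terse at exactly the points you fill in — the exact inverse $\left(I+e_n\otimes\nabla_{x'}\Gamma\right)^{-1}=I-e_n\otimes\nabla_{x'}\Gamma$ (valid since $\partial_n\Gamma=0$), the geometric-series majorant $\bar B(t)=C/(R'-t)$ for the analytic coefficients, and the harmless replacement of $P_1$ by $P_0$ on the left of (c) — so your additions are accurate elaborations rather than deviations.
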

\begin{proof}
We only prove (a),(b), and (c) is just obtained from (a),(b).
    \begin{itemize}
        \item[(a)] This is because $y=x+\Gamma(x)e_{n}$ (see \eqref{xtoy}), and $\ds\frac{\partial y}{\partial x}=I_{n\times n}+\nabla\Gamma\otimes e_{n}$.
        \item[(b)] As we assume $|\nabla_{y'}\Gamma|\leq1$, it follows that
        \begin{equation*}
            [y(x)]_{C^{1}}\leq[x]_{C^{1}}+[\Gamma(x')e_{n}]_{C^{1}}\leq2.
        \end{equation*}
        This means we can apply the composition rule Lemma \ref{composition}.
        
    \end{itemize}
\end{proof}

\begin{lemma}\label{reducegamma}
Assume that $\Gamma(0)=0$ and $\ds u\Big|_{B_{1}'}=0$, then
\begin{itemize}
    \item[(a)] $P_{0}[\nabla\Gamma]=P_{1}[\nabla\Gamma]\ll P_{1}[w]$,
    \item[(b)]$P_{0}[\Gamma](t)\ll P_{1}[w]^{(0)}+t\cdot P_{1}[w](t)$,
    \item[(c)] $\ds P_{0}[\frac{u}{x_{n}}]\ll P_{0}[\nabla u]$
    \item[(d)]$P_{0}[u]\ll(1+t)\cdot P_{0}[\nabla u](t)$.
\end{itemize}
\end{lemma}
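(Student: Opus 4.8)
The plan is to prove (a) and (c) first — these are the only parts where the geometry of the rescaled balls in the global norm enters — and then read off (b) and (d) from the integration rule Lemma~\ref{linearrule}(c). Write $\Delta_X:=1-|X|$. For (a): since $0\in\Gamma$ is a regular point, $\Gamma$ is a graph over the tangential variables and, as recalled before the statement, $\partial_k\Gamma(x')=\lim_{x_n\searrow0}w_k(x',x_n)$, so on $B_1'$ one has $\nabla\Gamma=(w_1,\dots,w_{n-1},0)$, controlled componentwise by $w$. Because $\partial_n\nabla\Gamma=0$ we have $P_0[\nabla\Gamma]=P_1[\nabla\Gamma]$, so it suffices to prove $[\nabla\Gamma]^{*,m}_{C^{m,\alpha}_0}\le[w]^{*,m}_{C^{m,\alpha}_1}$ for each $m$. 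Fix $X\in B_1^+$ and $\beta$ with $|\beta|=m$, $\beta_n=0$. The half-ball $B^+_{\Delta_X/(m+1)}(X)$ projects onto the disc $B_{\Delta_X/(m+1)}(X')\subset\{x_n=0\}$, on which $D^\beta\nabla\Gamma$ agrees with $D^\beta w(\cdot,0)$; and since $|p-q|\ge|p'-q'|$ the $C^\alpha$ seminorm of $D^\beta\nabla\Gamma$ over the half-ball is at most that of $D^\beta w(\cdot,0)$ over that disc. Comparing with the global norm of $w$ at the projected center $\bar X=(X',0)$ and using $\Delta_{\bar X}=1-|X'|\ge1-|X|=\Delta_X$ together with $\beta_n=0\le1$ gives $\Delta_X^m\big([D^\beta\nabla\Gamma]_{L^\infty}+\Delta_X^\alpha[D^\beta\nabla\Gamma]_{C^\alpha}\big)\le[w]^{*,m}_{C^{m,\alpha}_1}$; taking $\sup_X$ yields (a).

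For (c): since $u\big|_{B_1'}=0$, for $x_n>0$ we may write
\[
\frac{u}{x_n}(x',x_n)=\int_0^1\partial_n u(x',sx_n)\,ds ,
\]
and as $\beta_n=0$ the tangential derivative commutes with the integral, so $\big|D^\beta(\tfrac{u}{x_n})(x)\big|\le\sup_{s\in[0,1]}\big|D^\beta\partial_n u(x',sx_n)\big|$. The key is a scale comparison for the ``shadow'' point $z=(x',sx_n)$ of a point $x\in B^+_{\Delta_X/(m+1)}(X)$: one has $|z|\le|x|<|X|+\tfrac{\Delta_X}{m+1}=\tfrac{1+m|X|}{m+1}$, and this is precisely the inequality that produces a center $Y$ with $\Delta_Y\ge\Delta_X$ and $z\in B^+_{\Delta_Y/(m+1)}(Y)$ (take $Y=z$ if $|z|\le|X|$, and $Y=\lambda z$ for a suitable $\lambda\in(0,1)$ on the segment from $z$ to the origin otherwise). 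Hence $|D^\beta\partial_n u(z)|\le\Delta_Y^{-m}[\nabla u]^{*,m}_{C^{m,\alpha}_0}\le\Delta_X^{-m}[\nabla u]^{*,m}_{C^{m,\alpha}_0}$, so $\Delta_X^m[D^\beta(\tfrac{u}{x_n})]_{L^\infty(B^+_{\Delta_X/(m+1)}(X))}\le[\nabla u]^{*,m}_{C^{m,\alpha}_0}$; the $C^\alpha$ part is the same after the customary splitting of a pair $p,q$ into a near case (both shadows lying in one such ball, again by the bound above) and a far case (handled by the $L^\infty$ bound just obtained and Lemma~\ref{4df}). Taking $\sup$ over $X$ and $m$ gives (c). Equivalently, $\tfrac{u}{x_n}$ is the $s$-average of $\partial_n u$ composed with the vertical contraction $x\mapsto(x',sx_n)$, and such a contraction does not increase the global $C^{m,\alpha}_0$ norm.

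Parts (b) and (d) are now immediate from Lemma~\ref{linearrule}(c). For (d): $u(0)=0$ because $0\in B_1'$, and trivially $P_0[\nabla u]\ll P_0[\nabla u]$, so $P_0[u](t)\ll t\,P_0[\nabla u](t)+P_0[\nabla u]^{(0)}\ll(1+t)\,P_0[\nabla u](t)$, the last step because the constant term is majorized by the whole series. For (b): $\Gamma(0)=0$ and, by part (a), $P_0[\nabla\Gamma]\ll P_1[w]$, so $P_0[\Gamma](t)\ll t\,P_1[w](t)+P_1[w]^{(0)}$, which is the asserted bound.

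The main obstacle is the scale bookkeeping in (a) and (c): one must verify that a shadow point $(x',sx_n)$ of an interior point of $B^+_{\Delta_X/(m+1)}(X)$ is again an interior point of a ball $B^+_{\Delta_Y/(m+1)}(Y)$ with $\Delta_Y\ge\Delta_X$. The inequality $|z|<\tfrac{1+m|X|}{m+1}$ — where the factor $\tfrac1{l+1}$ in the definition of $[\,\cdot\,]^{*,l}_{C^{k,\alpha}_b}$ is exactly what is needed — makes the pointwise comparisons work with constant $1$ (the $C^\alpha$ parts with at most an absolute constant, absorbed as in Proposition~\ref{globalschauder}), so that the bounds can be stated as honest majorizations $\ll$; the degenerate case $s=0$ of this mechanism is precisely part (a).
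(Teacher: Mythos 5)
Your proof follows the same route as the paper's: (a) from $\partial_k\Gamma=\lim_{x_n\to0}w_k$, (c) from the integral representation $\tfrac{u}{x_n}=\int_0^1\partial_n u(x',sx_n)\,ds$, and (b), (d) from the integration rule of Lemma~\ref{linearrule}(c). Your contribution is to spell out the scale bookkeeping in the global norm $[\cdot]^{*,l}_{C^{k,\alpha}_b}$, which the paper leaves implicit, and parts (a), (b), (d) are handled correctly.

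There is, however, a genuine imperfection in your treatment of the H\"older part of (c). You choose the comparison center $Y$ as a radial projection of the individual shadow point $z=(x',sx_n)$; this is fine for the $L^\infty$ bound, but for the $C^\alpha$ seminorm you have two shadow points $\tilde p_s=(p',sp_n)$, $\tilde q_s=(q',sq_n)$ and your construction gives two different $Y$'s, so you resort to a near/far splitting and Lemma~\ref{4df}. As you acknowledge, this costs an absolute constant, whereas the lemma asserts the clean coefficientwise majorization $P_0[\tfrac{u}{x_n}]\ll P_0[\nabla u]$, with no constant. The fix is to choose the comparison center as the \emph{vertical contraction of the center itself}, $X_s:=(X',sX_n)$. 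Then $|X_s|\le|X|$, so $\Delta_{X_s}\ge\Delta_X$, and the contraction $\phi_s:(x',x_n)\mapsto(x',sx_n)$ sends $B^+_{\Delta_X/(m+1)}(X)$ into $B^+_{\Delta_X/(m+1)}(X_s)\subset B^+_{\Delta_{X_s}/(m+1)}(X_s)$. Since both $\tilde p_s$ and $\tilde q_s$ now lie in the \emph{same} admissible ball and $|\tilde p_s-\tilde q_s|\le|p-q|$, one gets, directly from the definition of the global norm,
\begin{align*}
\Delta_X^m\Big([D^\beta(\tfrac{u}{x_n})]_{L^\infty(B^+_{\Delta_X/(m+1)}(X))}&+\Delta_X^\alpha[D^\beta(\tfrac{u}{x_n})]_{C^\alpha(B^+_{\Delta_X/(m+1)}(X))}\Big)\\
&\le\int_0^1\Delta_{X_s}^m\Big([D^\beta\partial_n u]_{L^\infty(B^+_{\Delta_{X_s}/(m+1)}(X_s))}+\Delta_{X_s}^\alpha[D^\beta\partial_n u]_{C^\alpha}\Big)\,ds\\
&\le[\nabla u]^{*,m}_{C^{m,\alpha}_0},
\end{align*}
with constant exactly $1$, uniformly in $m$. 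Your closing remark that ``such a contraction does not increase the global $C^{m,\alpha}_0$ norm'' is precisely this statement; the explicit center $X_s$ is what makes it an honest identity rather than an inequality up to constants. The same remark streamlines your argument for (a): there the contraction is $\phi_0$, i.e.\ $s=0$, and your projected center $\bar X=(X',0)=X_0$ already is the right choice, so (a) was fine as written.
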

\begin{proof}
\begin{itemize}
    \item[(a)] By implicit function theorem, $\ds\partial_{k}\Gamma=\lim_{x_{n}\to0}\frac{U_{y_{k}}}{U_{y_{n}}}=\lim_{x_{n}\to0}w_{k}$, so $P_{1}[\nabla\Gamma]\ll P_{1}[w]$. An application of integration rule in Lemma \ref{linearrule} hence proves (b).
    \item[(c)] When $\ds u\Big|_{B_{1}'}=0$, then $\ds\frac{u}{x_{n}}=\int_{0}^{1}e_{n}\cdot\nabla u(x',t x_{n})dt$. By taking higher order derivatives to this integral, we obtain (c). We can then use the integration or product rule in Lemma \ref{linearrule} to prove (d).
\end{itemize}
\end{proof}

\subsection{Analyticity of free boundary}
Now let's inductively estimate the higher regularity of $w$ and $u$ using an ODE system. For convenience, we will treat analytic expressions like $\mathcal{F}(t,\Pi(t),\Omega(t))$ as power series of $t$ and denote
\begin{equation*}
    \mathcal{F}^{(k)}:=\frac{d^{k}}{dt^{k}}\Big|_{t=0}\mathcal{F}(t,\Pi(t),\Omega(t)).
\end{equation*}
With this notation, we have $P_{0}[\vec{F}]^{(k)}\leq\mathcal{F}^{(k)}$, $P_{0}[\widetilde{A}]^{(k)}\leq\widetilde{\mathcal{A}}^{(k)}$, e.t.c..

First, we have an estimate of higher derivatives of $w$.
\begin{lemma}
Assume that $div(x_{n}^{2}\tilde{A}\cdot\nabla w)=div(x_{n}^{2}\vec{F})+x_{n}G$ where $\tilde{A}$ is uniformly elliptic, and
\begin{equation*}
    P_{0}[\widetilde{A}]\ll\widetilde{\mathcal{A}}(t,\Pi,\Omega),\quad P_{0}[\vec{F}]\ll\mathcal{F}(t,\Pi,\Omega),\quad P_{0}[G]\ll\mathcal{G}(t,\Pi,\Omega),
\end{equation*}
then there is analytic and positive power series $\mathcal{M}$ so that
\begin{equation}
    \frac{d}{dt}\Omega\ll\mathcal{M}(t,\Pi,\Omega).
\end{equation}
\end{lemma}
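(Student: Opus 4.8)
The plan is to apply the global degenerate Schauder estimate of Proposition~\ref{globalschauder} to the equation $div(x_{n}^{2}\tilde{A}\cdot\nabla w)=div(x_{n}^{2}\vec{F})+x_{n}G$, extract the resulting inductive inequality on the global norms $[w]^{*,k}_{C_{1}^{1,\alpha}}$, and then repackage those inequalities as a single differential inequality for the power series $\Omega(t)=P_{1}[w](t)$. First I would differentiate the equation $k$ times in the tangential directions $D_{T}^{k}$; since normal derivatives never hit the $x_{n}^{2}$ weight, the differentiated equation is again of the degenerate form $div(x_{n}^{2}\tilde{A}\cdot\nabla(D_{T}^{k}w))=div(x_{n}^{2}\vec{F}_{k})+x_{n}G_{k}$ where $\vec{F}_{k}$ and $G_{k}$ collect the original right-hand sides differentiated plus commutator terms of the schematic form $\sum_{l}\binom{k}{l}D_{T}^{l}\tilde{A}\cdot\nabla D_{T}^{k-l}w$. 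Applying Proposition~\ref{globalschauder} with $l=k$ to $D_{T}^{k}w$ then yields
\begin{equation*}
    [w]^{*,k+1}_{C_{1}^{1,\alpha}}\leq C\Big\{[\vec{F}_{k}]^{*,k+1}_{C_{0}^{0,\alpha}}+[G_{k}]^{*,k+1}_{C_{0}^{0,\alpha}}+(k+2)[D_{T}^{k}w]^{*,k}_{C_{0}^{0,\alpha}}\Big\},
\end{equation*}
and the commutator and right-hand-side terms are controlled, via the product rule of Lemma~\ref{linearrule}(b) and the hypotheses $P_{0}[\tilde{A}]\ll\widetilde{\mathcal{A}}$, $P_{0}[\vec{F}]\ll\mathcal{F}$, $P_{0}[G]\ll\mathcal{G}$, by the coefficients of $\widetilde{\mathcal{A}}(t,\Pi,\Omega)\cdot\Omega(t)$, $\mathcal{F}(t,\Pi,\Omega)$, and $\mathcal{G}(t,\Pi,\Omega)$ respectively.

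Next I would sum these scalar inequalities against $t^{k}/k!$. The term $(k+2)[D_{T}^{k}w]^{*,k}_{C_{0}^{0,\alpha}}$ produces, after summation, something dominated by $\frac{d}{dt}\big(t\,\Omega(t)\big)$ or simply by a constant multiple of $\Omega(t)+t\Omega'(t)$; the right-hand-side contributions produce $\mathcal{F}(t,\Pi,\Omega)+\mathcal{G}(t,\Pi,\Omega)$; and the commutator term produces (the derivative of) a product $\widetilde{\mathcal{A}}(t,\Pi,\Omega)\cdot\Omega(t)$. Since $P_{1}[w]^{(k+1)}=[w]^{*,k+1}_{C_{1}^{k+1,\alpha}}$ is exactly the $(k+1)$-st Taylor coefficient of $\Omega$, and since (by the remark after the definition of the global norm, choosing $l=k+1$) $[w]^{*,k+1}_{C_{1}^{1,\alpha}}$ controls $P_{1}[w]^{(k+1)}$ up to extracting $k$ further normal derivatives and reusing the equation, the left side of the summed inequality majorizes $\frac{d}{dt}\Omega(t)$. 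Collecting everything, I obtain $\Omega'(t)\ll\mathcal{M}(t,\Pi(t),\Omega(t))$ with $\mathcal{M}$ built as a finite algebraic combination (sums and products) of $\widetilde{\mathcal{A}},\mathcal{F},\mathcal{G}$ and the identity/monomials in $t$ and $\Omega$, hence analytic and positive with a positive radius of convergence inherited from those of $\widetilde{\mathcal{A}},\mathcal{F},\mathcal{G}$.

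The main obstacle I anticipate is bookkeeping the passage from the $C^{1,\alpha}$ global-norm estimate (which only directly bounds $[w]^{*,k+1}_{C_{1}^{1,\alpha}}$, i.e. one-derivative information at scale $k+1$) to full control of the Taylor coefficient $P_{1}[w]^{(k+1)}=[w]^{*,k+1}_{C_{1}^{k+1,\alpha}}$, which requires $k+1$ derivatives. The resolution is the same trick used in the proof of Theorem~\ref{HOBH straight}: repeatedly differentiate the degenerate equation in the normal direction to trade the $\partial_{n}^{j}$ derivatives for lower-order tangential derivatives already bounded, absorbing each new batch of coefficient derivatives through the product rule; because at most one normal derivative appears in the norm $C^{1,\alpha}_{1}$, only a bounded amount of this trading is needed per Taylor level, and the combinatorial factors stay compatible with the $t^{k}/k!$ weighting. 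A secondary subtlety is that the constant $C$ in Proposition~\ref{globalschauder} is independent of $l=k$ while the explicit factor $(k+2)$ is retained; this is precisely what makes the summation close into a differential (rather than merely formal) inequality, so I would be careful to track that $C$ never silently acquires $k$-dependence through the product-rule estimates on $\vec{F}_{k},G_{k}$.
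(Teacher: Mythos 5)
Your overall strategy is the same as the paper's: differentiate the equation tangentially $k$ times, apply Proposition~\ref{globalschauder} with $l=k$, control the commutator terms by the product rule of Lemma~\ref{linearrule}(b), and resum the resulting scalar inequalities into a majorant ODE. However, the ``main obstacle'' you identify is not actually there, and the trick you propose to resolve it is both unnecessary and would not go through cleanly.

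When Proposition~\ref{globalschauder} is applied (with $l=k$) to $D^{\beta}w$ for a \emph{tangential} multi-index $|\beta|=k$, the left-hand side it bounds is $[D^{\beta}w]^{*,k+1}_{C_{1}^{1,\alpha}}$, not $[w]^{*,k+1}_{C_{1}^{1,\alpha}}$ as your display has it. Taking the supremum over all such $\beta$ and noting $\beta_{n}=0$, composing one further derivative (with $\le 1$ normal, from the $b=1$ subscript) with $D^{\beta}$ ranges exactly over all $\mu$ with $|\mu|=k+1$, $\mu_{n}\le 1$. Hence
\begin{equation*}
\sup_{|\beta|=k,\ \beta_{n}=0}[D^{\beta}w]^{*,k+1}_{C_{1}^{1,\alpha}}=[w]^{*,k+1}_{C_{1}^{k+1,\alpha}}=P_{1}[w]^{(k+1)}=\Omega^{(k+1)},
\end{equation*}
which is the exact Taylor coefficient you want, with no further conversion. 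There is no need to ``trade $\partial_{n}^{j}$ for tangential derivatives by differentiating the equation in the normal direction''; indeed, normal differentiation does hit the $x_{n}^{2}$ weight and changes the structure of the equation, so that route would require substantial additional bookkeeping that the paper entirely avoids.

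A second point you underplay: after resummation, $\Omega'(t)$ does not sit alone on the left. The factor $(k+2)$ contributes a $(t\,\frac{d}{dt}\Omega)^{(k)}$ piece, and the commutator sum contributes $[(\widetilde{\mathcal{A}}-\widetilde{\mathcal{A}}^{(0)})\frac{d}{dt}\Omega]^{(k)}$ (note it is $\widetilde{\mathcal{A}}\cdot\Omega'$, not $\widetilde{\mathcal{A}}\cdot\Omega$, that appears). So the RHS contains $\Omega'$ with coefficient $C\bigl(t+\widetilde{\mathcal{A}}-\widetilde{\mathcal{A}}^{(0)}\bigr)$. One must move this to the left and divide by $1-Ct-C(\widetilde{\mathcal{A}}-\widetilde{\mathcal{A}}^{(0)})$, which is only legitimate because that coefficient is a positive power series vanishing at $t=0$; this structural fact is what makes $\mathcal{M}$ a genuine analytic, positive power series and should be stated explicitly rather than folded into ``a finite algebraic combination of $\widetilde{\mathcal{A}},\mathcal{F},\mathcal{G}$.''
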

\begin{proof}
Let $\beta$ be any tangential multi-index with $|\beta|=k\geq1$, applying $D^{\beta}$ to \eqref{system} gives
\begin{align*}
    div(x_{n}^{2}\widetilde{A}\cdot\nabla D^{\beta}w)=&div(x_{n}^{2}D^{\beta}\vec{F})+x_{n}D^{\beta}G\notag\\
    &-\sum_{0<\gamma\leq\beta}\prod_{m=1}^{n}\tbinom{\beta_{m}}{\gamma_{m}}\partial_{x_{i}}[x_{n}^{2}(D^{\gamma}\widetilde{a}^{ij})(\partial_{x_{j}}D^{\beta-\gamma}w_{k})].
\end{align*}
The global Schauder estimate proposition \ref{globalschauder} yields that
\begin{align*}
    \Omega^{(k+1)}\leq&C\{(k+2)[w]^{*,k}_{C_{0}^{k,\alpha}}+[\vec{F}]^{*,k+1}_{C_{0}^{k,\alpha}}+[G]^{*,k+1}_{C_{0}^{k,\alpha}}+\sum_{l=1}^{k}\tbinom{k}{l}[\widetilde{A}]^{*,l}_{C_{0}^{l,\alpha}}[w]^{*,k-l+1}_{C_{1}^{l,\alpha}}\}\notag\\
    \leq&C\{(k+2)\Omega^{(k)}+\mathcal{F}^{(k)}+\mathcal{G}^{(k)}+\sum_{l=1}^{k}\tbinom{k}{l}\widetilde{\mathcal{A}}^{(l)}\Omega^{(k-l+1)}\}\notag\\
    \leq&C\{(t\frac{d}{dt}\Omega)^{(k)}+(\Omega+\mathcal{F}+\mathcal{G})^{(k)}+[(\widetilde{\mathcal{A}}-\widetilde{\mathcal{A}}^{(0)})\frac{d}{dt}\Omega]^{(k)}\}.
\end{align*}
Here, we have used a combinatoric identity
\begin{equation*}
\sum_{\substack{\gamma\leq\beta\\|\gamma|=l}}\prod_{m=1}^{n}\tbinom{\beta_{m}}{\gamma_{m}}=\tbinom{|\beta|}{l}.
\end{equation*}
By setting $k\geq0$ to be arbitrary, we have
\begin{equation*}
    \frac{d}{dt}\Omega\ll C\{t\frac{d}{dt}\Omega+\Omega+\mathcal{F}+\mathcal{G}+(\widetilde{\mathcal{A}}-\widetilde{\mathcal{A}}^{(0)})\frac{d}{dt}\Omega\}.
\end{equation*}
Therefore, we obtain
\begin{equation}
    \frac{d}{dt}\Omega\ll C\frac{\Omega+\mathcal{F}+\mathcal{G}}{1-t-(\widetilde{\mathcal{A}}-\widetilde{\mathcal{A}}^{(0)})}=:\mathcal{M}(t,\Pi,\Omega).
\end{equation}
We remark that $(\widetilde{\mathcal{A}}-\widetilde{\mathcal{A}}^{(0)})$ is a positive power series starting from linear term.
\end{proof}

Similarly, we have an estimate of higher derivatives of $u$.
\begin{lemma}
Assume that $div(A\cdot\nabla u)=div(\vec{H})$ where $A$ is uniformly elliptic, and
\begin{equation*}
    P_{0}[A]\ll\mathcal{A}(t,\Pi,\Omega),\quad P_{0}[\vec{H}]\ll\mathcal{H}(t,\Pi,\Omega),
\end{equation*}
then there is analytic and positive power series $\mathcal{N}$ so that
\begin{equation}
    \frac{d}{dt}\Pi\ll\mathcal{N}(t,\Pi,\Omega).
\end{equation}
\end{lemma}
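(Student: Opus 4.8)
The plan is to repeat, essentially line for line, the proof of the preceding lemma, now for the non-degenerate equation \eqref{system2} in place of the degenerate \eqref{system}: Proposition \ref{globaluniformschauder} replaces Proposition \ref{globalschauder}, $\Pi$ replaces $\Omega$, $A$ replaces $\widetilde A$, $\vec H$ replaces $\vec F$, and there is no analogue of the $x_{n}G$ term. Concretely, I would first fix a tangential multi-index $\beta$ (running one order higher than in the $\Omega$-argument, since $\Pi=P_{0}[\nabla u]$ already carries a gradient) and apply $D^{\beta}$ to \eqref{system2}; Leibniz gives
\begin{equation*}
  div\big(A\cdot\nabla D^{\beta}u\big)=div\Big(D^{\beta}\vec{H}-\sum_{0<\gamma\leq\beta}\prod_{m=1}^{n}\tbinom{\beta_{m}}{\gamma_{m}}(D^{\gamma}a^{ij})(\partial_{x_{j}}D^{\beta-\gamma}u)\,e_{i}\Big),
\end{equation*}
again of the form \eqref{standard} with leading coefficient $A\in C^{\alpha}$, the higher derivatives of $A$ and the whole of $D^{\beta}\vec{H}$ having been pushed into the right-hand side.

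Feeding this into Proposition \ref{globaluniformschauder} with the growth parameter chosen so that the left-hand side is exactly a coefficient of $\Pi$, taking the maximum over admissible $\beta$, using the identity $\sum_{\gamma\leq\beta,\,|\gamma|=l}\prod_{m}\tbinom{\beta_{m}}{\gamma_{m}}=\tbinom{|\beta|}{l}$, the inclusion $P_{0}\ll P_{1}$, and Lemma \ref{reducegamma}(c)--(d) to trade the $u$ appearing in $\vec H$ and in the lower-order pieces for $\nabla u$, one gets for every $k\geq0$ an inductive inequality of the schematic shape
\begin{equation*}
  \Pi^{(k+1)}\leq C\Big\{(k+2)\,\Pi^{(k)}+\mathcal{H}^{(k+1)}+\sum_{l=1}^{k+1}\tbinom{k+1}{l}\,\mathcal{A}^{(l)}\,\Pi^{(k+1-l)}\Big\},
\end{equation*}
where $P_{0}[A]\ll\mathcal{A}(t,\Pi,\Omega)$, $P_{0}[\vec H]\ll\mathcal{H}(t,\Pi,\Omega)$ and the product rule of Lemma \ref{linearrule}(b) have been used. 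Reading $(k+2)\Pi^{(k)}$ as the coefficients of $t\tfrac{d}{dt}\Pi+2\Pi$, the convolution sum as those of $(\mathcal{A}-\mathcal{A}^{(0)})\tfrac{d}{dt}\Pi$ (with $\mathcal{A}-\mathcal{A}^{(0)}$ a positive series vanishing at $0$), and $\mathcal{H}^{(k+1)}$ as those of $\tfrac{d}{dt}\mathcal{H}$ — which is in turn $\ll$ an analytic expression in $t,\Pi,\Omega,\tfrac{d}{dt}\Pi,\tfrac{d}{dt}\Omega$, the last of these being $\ll\mathcal{M}$ by the previous lemma — and collecting $\tfrac{d}{dt}\Pi$, one arrives at a majorant differential inequality whose coefficient of $\tfrac{d}{dt}\Pi$ has no pole at the origin; solving for $\tfrac{d}{dt}\Pi$ in $\mathbb{R}[[t]]$ then produces an analytic positive $\mathcal{N}(t,\Pi,\Omega)$, of the same shape as $\mathcal{M}$, with $\tfrac{d}{dt}\Pi\ll\mathcal{N}(t,\Pi,\Omega)$.

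The main obstacle is the bookkeeping with the global norms $[\cdot]^{*,l}_{C^{k,\alpha}_{b}}$: one must keep the growth exponents $l$ and the $k$-dependent ball radii $\Delta/(l+1)$ aligned so that differentiating $u$ and applying Proposition \ref{globaluniformschauder} lands precisely on $\Pi^{(k+1)}=P_{0}[\nabla u]^{(k+1)}$ and not on a neighbouring norm, which forces the one-step index shift relative to the $\Omega$-case. The more delicate issue is that $\vec H$, unlike $\vec F$, is itself built from $u$, so the term of $D^{\beta}\vec H$ in which every derivative falls on $u$ reproduces a top-order derivative of $u$; one has to notice that $div(\vec H)$ is only a first-order operator in $u$ and recombine that term with the leading operator $div(A\cdot\nabla\,\cdot)$ (or absorb it) before Proposition \ref{globaluniformschauder} can be applied cleanly. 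Once that is arranged, every step is a transcription of the preceding lemma, and one also has to check — exactly as there, and using Lemma \ref{AAFGH} — that the resulting $\mathcal{N}$ is analytic and has nonnegative coefficients in $(t,\Pi,\Omega)$.
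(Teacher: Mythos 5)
Your proposal follows the same outline the paper intends (the paper offers no written proof, just ``Similarly''), and you are right that it is \emph{not} a verbatim copy of the $\Omega$-argument: because $\Pi=P_{0}[\nabla u]$ already carries one derivative, the Schauder step must be applied to $D^{\beta}u$ with $|\beta|=k+1$ rather than $k$, so the source term contributes $P_{0}[\vec H]^{(k+1)}\leq\mathcal{H}^{(k+1)}$ rather than $\mathcal{H}^{(k)}$; equivalently, the ODE inequality acquires the term $\tfrac{d}{dt}\mathcal{H}(t,\Pi,\Omega)=\partial_{t}\mathcal{H}+\partial_{\Pi}\mathcal{H}\cdot\Pi'+\partial_{\Omega}\mathcal{H}\cdot\Omega'$ rather than $\mathcal{H}$. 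You correctly spot this and correctly note that $\Omega'$ may be replaced by $\mathcal{M}$ from the previous lemma, while the $\partial_{\Pi}\mathcal{H}\cdot\Pi'$ term must be moved to the left and solved for. This is the right mechanism.

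The one point you gloss over is the assertion that the resulting divisor ``has no pole at the origin.'' After collecting $\Pi'$ you get a factor of the form $1-C\bigl(t+(\mathcal{A}-\mathcal{A}^{(0)})+\partial_{\Pi}\mathcal{H}\bigr)$ in the denominator, and while $t$ and $\mathcal{A}-\mathcal{A}^{(0)}$ vanish at $t=0$, the term $\partial_{\Pi}\mathcal{H}(0,\Pi^{(0)},\Omega^{(0)})$ does \emph{not} vanish in general; one needs $C\,\partial_{\Pi}\mathcal{H}(0,\Pi^{(0)},\Omega^{(0)})<1$ for the quotient to be a positive power series. This is not a formal consequence of the hypothesis $P_{0}[\vec H]\ll\mathcal{H}(t,\Pi,\Omega)$ alone. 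It does hold in the intended application because $\vec H=\frac{\partial x_i}{\partial y_p}\partial_{y_k}b^{pq}u_{q}e_{i}$ is \emph{linear} in $u$ with a coefficient proportional to $\nabla_{y}B$, so $\partial_{\Pi}\mathcal{H}(0,\cdot,\cdot)$ is of order $\|\nabla_{y}B\|_{L^{\infty}}$ and can be made as small as needed by an initial rescaling (precisely the kind of normalization used to obtain $\|\nabla_{y'}\Gamma\|_{C^{\alpha}}\leq1$ in the corollary). You should state this smallness condition explicitly, since without it the division step — and hence the positivity and analyticity of $\mathcal{N}$ — would fail. Your side remark about ``recombining $div(\vec H)$ with the leading operator'' is an alternative way of phrasing the same absorption, but it is unnecessary once the smallness is in hand; the remaining bookkeeping (the binomials $\tbinom{k+1}{l}$ producing a harmless extra $\mathcal{A}'\Pi$ contribution alongside $(\mathcal{A}-\mathcal{A}^{(0)})\Pi'$) is routine.
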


With these the standard Cauchy-Kovalevskaya theorem for ODEs proves the analyticity.

\begin{corollary}
    If $\Gamma(0)=0$, $\|\nabla_{y'}\Gamma\|_{C^{\alpha}}\leq1$, and $\Pi^{(0)}$ and $\Omega^{(0)}$ are bounded, then $\Pi(t)$ and $\Omega(t)$ are analytic near $t=0$, with converging radius uniformly bounded from below.
\end{corollary}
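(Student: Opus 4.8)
The plan is to run the classical \emph{method of majorants} on the coupled differential inequalities
\begin{equation*}
    \frac{d}{dt}\Omega\ll\mathcal M(t,\Pi,\Omega),\qquad\frac{d}{dt}\Pi\ll\mathcal N(t,\Pi,\Omega)
\end{equation*}
supplied by the two preceding lemmas, where $\mathcal M$ and $\mathcal N$ are fixed power series with nonnegative coefficients in the variables $(t,p,o)$, analytic on a polydisc whose size is controlled only by $n,\alpha,\lambda,\Lambda$ and the analyticity radius of $B$ (this is exactly Lemma \ref{AAFGH} together with Lemmas \ref{geometry} and \ref{reducegamma}; note the two lemmas' proofs already pass from the raw inequalities, of the shape $\Omega'\ll C\{t\Omega'+\Omega+\mathcal F+\mathcal G+(\widetilde{\mathcal A}-\widetilde{\mathcal A}^{(0)})\Omega'\}$, to the resolved form by the elementary fact that $X\ll Y+ZX$ with $Z$ a positive series of order $\ge1$ forces $X\ll Y(1-Z)^{-1}$). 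Set $P_0:=\Pi^{(0)}$ and $O_0:=\Omega^{(0)}$, which are finite and bounded by the assumed constant, and let $(\bar\Pi,\bar\Omega)$ be the solution of the genuine ODE system
\begin{equation*}
    \frac{d}{dt}\bar\Omega=\mathcal M(t,\bar\Pi,\bar\Omega),\qquad\frac{d}{dt}\bar\Pi=\mathcal N(t,\bar\Pi,\bar\Omega),\qquad\bar\Pi(0)=P_0,\quad\bar\Omega(0)=O_0.
\end{equation*}
Since the right-hand sides are analytic near $(0,P_0,O_0)$, the Cauchy--Kovalevskaya theorem for ODEs yields a unique analytic solution $(\bar\Pi,\bar\Omega)$ on a disc $\{|t|<\delta\}$, and the standard Cauchy-estimate proof shows $\delta$ may be bounded below by a positive constant depending only on the (uniform) size of the polydisc of analyticity, the sup-norms of $\mathcal M,\mathcal N$ there, and the a priori bounds on $P_0,O_0$; in particular $\delta$ does not degenerate.

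The heart of the argument is the termwise majorization $\Pi\ll\bar\Pi$, $\Omega\ll\bar\Omega$, proved by induction on the coefficient order $k$. Expanding $\mathcal M(t,p,o)=\sum c_{abc}t^ap^bo^c$ with $c_{abc}\ge0$, the coefficient of $t^k$ in $\mathcal M(t,\Pi(t),\Omega(t))$ is a universal polynomial $Q_k$ with nonnegative coefficients in the $c_{abc}$ and in $\Pi^{(0)},\dots,\Pi^{(k)},\Omega^{(0)},\dots,\Omega^{(k)}$, so the first differential inequality reads
\begin{equation*}
    \Omega^{(k+1)}\le Q_k\bigl(\Pi^{(0)},\dots,\Pi^{(k)},\Omega^{(0)},\dots,\Omega^{(k)}\bigr),
\end{equation*}
and likewise $\Pi^{(k+1)}\le R_k(\cdots)$ for a polynomial $R_k$ coming from $\mathcal N$. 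The \emph{same} polynomials $Q_k,R_k$ compute $\bar\Omega^{(k+1)},\bar\Pi^{(k+1)}$, now with equality, since $(\bar\Pi,\bar\Omega)$ solves the ODE built from the very same $\mathcal M,\mathcal N$. As $Q_k,R_k$ are nondecreasing in each of their nonnegative arguments, the inductive hypotheses $\Pi^{(j)}\le\bar\Pi^{(j)}$, $\Omega^{(j)}\le\bar\Omega^{(j)}$ for $j\le k$ propagate to level $k+1$; the base case $k=0$ is $\Pi^{(0)}=P_0=\bar\Pi^{(0)}$, $\Omega^{(0)}=O_0=\bar\Omega^{(0)}$. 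The one point to verify is that the $t^k$-coefficient of each right-hand side involves the unknowns only up to order $k$, which is automatic since substituting power series into an analytic function never raises the order; thus the recursion is well founded.

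Granting the majorization, $\Pi$ and $\Omega$ converge on $\{|t|<\delta\}$ because $\bar\Pi,\bar\Omega$ do and all coefficients are nonnegative; hence $\Pi(t)$ and $\Omega(t)$ are analytic near $t=0$ with radius of convergence at least $\delta$, and $\delta$ depends only on $n,\alpha,\lambda,\Lambda$, the analyticity of $B$, and the allowed size of $\Pi^{(0)},\Omega^{(0)}$. This is the statement of the corollary. As a byproduct one obtains Theorem \ref{analytic}: by Lemma \ref{reducegamma}(b), $P_0[\Gamma]\ll\Omega^{(0)}+t\,\Omega(t)$, so the majorant series $P_0[\Gamma]$ converges for $|t|<\delta$, and convergence of $P_0[\Gamma]$ forces $\Gamma$ --- a function of $x'$ alone --- to be analytic in a fixed neighbourhood of $0$.

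The step I expect to be the real obstacle is not the ODE comparison, which is classical, but the bookkeeping required to present it rigorously: one must confirm that $\mathcal M$ and $\mathcal N$ genuinely are analytic functions of $(t,p,o)$ with nonnegative coefficients on a polydisc of uniform size, in particular that the denominators $1-t-(\widetilde{\mathcal A}-\widetilde{\mathcal A}^{(0)})$ stay bounded away from $0$ near $t=0$ (using that $\widetilde{\mathcal A}-\widetilde{\mathcal A}^{(0)}$ has nonnegative coefficients and vanishes at $t=0$ once $\Pi,\Omega$ are substituted, because then $p=P_0$, $o=O_0$ there), and that the dependence of $\mathcal M,\mathcal N$ on the fixed parameters $P_0,O_0$ does not spoil the uniform lower bound on $\delta$.
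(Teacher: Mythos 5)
Your proof is correct and follows essentially the same route as the paper: majorize $(\Pi,\Omega)$ by the solution $(\bar\Pi,\bar\Omega)$ of the honest ODE system with the same positive right-hand sides and initial data, then invoke analyticity (Cauchy--Kovalevskaya) of first-order ODE solutions to get a uniform radius. You have simply spelled out the coefficient-by-coefficient induction that the paper leaves implicit in the phrase ``because the right hand sides are positive power series,'' and your remark about the denominator $1-t-(\widetilde{\mathcal A}-\widetilde{\mathcal A}^{(0)})$ being $1$ at $t=0$ correctly justifies that $\mathcal M,\mathcal N$ are analytic with nonnegative coefficients near the initial point.
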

\begin{proof}
    As $\Pi$ and $\Omega$ satisfy two ODE inequalities, they are majorized by ODE solutions of the same right hand side and the same initial data, because the right hand sides are positive power series. Analyticity of first order ODE system gives a uniform lower bound of the convergence radius of $\Pi(t)$ and $\Omega(t)$.
\end{proof}

{\it Proof of Theorem \ref{analytic}}. Analyticity of $w$ in $x$-coordinate implies the analyticity of $\Gamma(x)$ in $x$-coordinate. As $\Gamma(x)=\Gamma(x')$, and $x'=y'$ in the $x-y$ coordinate change, $\Gamma(y')$ is also analytic in $y$-coordinate.
\qed


\begin{thebibliography}{10}

\bibitem{otherrhs}
Mark Allen and Henrik Shahgholian.
\newblock A new boundary harnack principle (equations with right hand side).
\newblock {\em Archive for Rational Mechanics and Analysis}, 234:1413 -- 1444,
  2018.

\bibitem{BB1}
Richard~F. Bass and Krzysztof Burdzy.
\newblock A boundary harnack principle in twisted holder domains.
\newblock {\em Annals of Mathematics}, 134(2):253--276, 1991.

\bibitem{BB2}
Richard~F. Bass and Krzysztof Burdzy.
\newblock The boundary harnack principle for non-divergence form elliptic
  operators.
\newblock {\em Journal of The London Mathematical Society-second Series},
  50:157--169, 1994.

\bibitem{blatt}
Simon Blatt.
\newblock On the analyticity of solutions to non-linear elliptic partial
  differential systems, arXiv:2009.08762 (2020).

\bibitem{CFMS}
L.~Caffarelli, E.~Fabes, S.~Mortola, and S.~Salsa.
\newblock Boundary behavior of nonnegative solutions of elliptic operators in
  divergence form.
\newblock {\em Indiana University Mathematics Journal}, 30(4):621--640, 1981.

\bibitem{Caff77}
Luis Caffarelli.
\newblock The regularity of free boundaries in higher dimensions.
\newblock {\em Acta Mathematica}, 139:155--184, 1977.

\bibitem{DS14thin}
Daniela De~Silva and Ovidiu Savin.
\newblock Boundary harnack estimates in slit domains and applications to thin
  free boundary problems.
\newblock {\em Revista Matemática Iberoamericana}, 32, 06 2014.

\bibitem{DS14}
Daniela De~Silva and Ovidiu Savin.
\newblock A note on higher regularity boundary harnack inequality.
\newblock {\em Discrete and Continuous Dynamical Systems}, 35, 03 2014.

\bibitem{DS19}
Daniela De~Silva and Ovidiu Savin.
\newblock A short proof of boundary harnack principle.
\newblock {\em Journal of Differential Equations}, 269:2419--2429, 2020.

\bibitem{DP20}
Hongjie Dong and Tuoc Phan.
\newblock On parabolic and elliptic equations with singular or degenerate coefficients, arXiv:2007.04385 (2020)

\bibitem{DP21}
Hongjie Dong and Tuoc Phan.
\newblock Weighted mixed-norm $l_p$ estimates for equations in non-divergence
  form with singular coefficients: the dirichlet problem, arXiv:2103.08033
  (2021).

\bibitem{F82a}
Eugene~B. Fabes.
\newblock Properties of nonnegative solutions of degenerate elliptic equations.
\newblock {\em Rendiconti del Seminario Matematico e Fisico di Milano},
  52:11--21, 1982.

\bibitem{FGMS}
Eugene~B. Fabes, Nicola Garofalo, Santiago~Mar{\'i}n Malav{\'e}, and Sandro
  Salsa.
\newblock Fatou theorems for some nonlinear elliptic equations.
\newblock {\em Revista Matematica Iberoamericana}, 4:227--251, 1988.

\bibitem{F82b}
Eugene~B. Fabes, Carlos~E. Kenig, and Raul~Paolo Serapioni.
\newblock The local regularity of solutions of degenerate elliptic equations.
\newblock {\em Communications in Statistics-theory and Methods}, 7:77--116,
  1982.

\bibitem{JK82}
David Jerison and Carlos~E. Kenig.
\newblock Boundary behavior of harmonic functions in non-tangentially
  accessible domains.
\newblock {\em Advances in Mathematics}, 46:80--147, 1982.

\bibitem{Kemper72}
John~T. Kemper.
\newblock A boundary harnack principle for lipschitz domains and the principle
  of positive singularities.
\newblock {\em Communications on Pure and Applied Mathematics}, 25:247--255,
  1972.

\bibitem{KN77}
David Kinderlehrer and Louis Nirenberg.
\newblock Regularity in free boundary problems.
\newblock {\em Annali Della Scuola Normale Superiore Di Pisa-classe Di
  Scienze}, 4:373--391, 1977.

\bibitem{KPS14}
Herbert Koch, Arshak Petrosyan, and Wenhui Shi.
\newblock Higher regularity of the free boundary in the elliptic signorini
  problem.
\newblock {\em Nonlinear Analysis}, 126:3--44, 2015.
\newblock Sub-Riemannian Geometric Analysis and PDEs.

\bibitem{RHS}
Xavier Ros-Oton and Damià Torres-Latorre.
\newblock New boundary harnack inequalities with right hand side.
\newblock {\em Journal of Differential Equations}, 288:204--249, 2021.

\bibitem{TTV}
Susanna Terracini, Giorgio Tortone, and Stefano Vita.
\newblock Higher order boundary harnack principle via degenerate equations,
  arXiv:2301.00227 (2023).

\end{thebibliography}
\end{document}